\newtheorem*{thm*}{Theorem}
\newtheorem{thm}{Theorem}[section]
\newtheorem{prop}{Proposition}[section]
\newtheorem{lemma}{Lemma}[section]
\newtheorem{cor}{Corollary}[section]
\newtheorem{defn}{Definition}[section]
\newtheorem{rem}{Remark}[section]
\newcommand{\RL}[1]{Z_{#1}}
\newcommand{\E}{\mathscr{E}}
\newcommand{\RLs}{A}
\newcommand{\R}{\mathcal{R}}
\newcommand{\V}{\mathcal{V}}
\newcommand{\Z}{\mathcal{Z}}
\newcommand{\RLp}[1]{Z^{(#1)}}
\newcommand{\kei}[1]{{k(I)}}
\newcommand{\Orb}[2]{\mathcal{O}_{#2}(#1)}
\newcommand{\Or}{\mathcal{O}}
\newcommand{\Th}[2]{\Theta^{#1}_{#2}}
\newcommand{\norm}[1]{|\!| #1 |\!|}
\newcommand{\fracpart}[1]{\{\!\{ #1 \}\!\}}
\newcommand{\be}{\begin{equation}}
\newcommand{\ee}{\end{equation}}
\newcommand{\bes}{\begin{equation*}}
\newcommand{\ees}{\end{equation*}}
\newcommand{\st}{\,  | \, \, }
\newcommand{\Sym}[1]{\mathcal{S} _{ #1} }
\newcommand{\ud}{\mathrm{d}}
\newcommand{\Leb}[1]{Leb\left( #1\right)}
\newcommand{\BS}[2]{S_{#2}(#1)}
\newenvironment{proofof}[2]{\begin{proof}[Proof of #1 \ref{#2}.]}{\end{proof}}
\title{Absence of mixing in area-preserving flows on surfaces}
\author{Corinna Ulcigrai}
\date{December 6, 2008}
\begin{document}

\maketitle
\begin{abstract}
We prove that  minimal area-preserving flows locally given by a smooth Hamiltonian on a closed surface of any genus  are  typically (in the measure-theoretical sense) not mixing. The result is obtained by considering special flows over interval exchange transformations under roof functions with symmetric logarithmic singularities  and proving absence of mixing for a full measure set of interval exchange transformations.
\end{abstract}



\section{Definitions and Main Results}
\subsection{Flows given by multi-valued Hamiltonians}\label{multivalHsec}
Let us consider the following natural construction of area-preserving flows on surfaces.  
On a closed, compact, orientable surface of genus $g\geq 1$ with a fixed smooth area form, consider a smooth closed differential $1$-form $\omega$.
Since $\omega $ is closed, it is locally given by $\ud H$ for some real-valued function $H$.  The flow $\{\varphi_t\}_{t\in\mathbb{R}}$ determined by $\omega$ is the associated Hamiltonian flow, which is given by local solutions of $\dot{x}=\frac{\partial H}{\partial y}$, $\dot{y}=-\frac{\partial H}{\partial x}$. The flow $\{\varphi_t\}_{t\in\mathbb{R}}$ is known as the \emph{multi-valued Hamiltonian flow}\footnote{The Hamiltonian $ H$ locally defined by $\ud H= \omega$  could indeed not be  well defined globally (see \cite{NZ:flo}, \S1.3.4) and hence determines a multi-valued Hamiltonian function.} determined by $\omega$. 
Remark that the transformations $\varphi_t$, for each $t \in \mathbb{R}$, are symplectic, which in dimension $2$ is equivalent to area-preserving.

The study of flows given by multi-valued Hamiltonians was initiated
by S.P. Novikov \cite{No:the} in connection with problems arising in solid-state
physics i.e., the motion of an electron in a metal under the action of a magnetic field.
The orbits of such flows arise also in pseudo-periodic topology, as hyperplane
sections of periodic surfaces in $\mathbb{T}^n$ (see e.g. Zorich \cite{Zo:how}).

From the point of view of topological dynamics, a decomposition into  \emph{minimal components} (i.e.~subsurfaces on which the flow is minimal) and components on which all orbits are periodic was  proved independently  by Maier \cite{Ma:tra}, by Levitt \cite{Le:feu} (in the context of foliations on surfaces) and by Zorich \cite{Zo:how} for multi-valued Hamiltonian flows. We consider the case in which the  flow is \emph{minimal}, i.e.~all semi-infinite trajectories are dense.

From the point of view of ergodic theory,  one is naturally lead to ask whether the flow on each minimal component is \emph{ergodic} and, in this case, whether it is \emph{mixing}.  
Ergodicity is equivalent to ergodicity of the Poincar{\'e} first return map on a cross section, which is isomorphic to a minimal interval exchange transformation (see \S \ref{IETsdefsec} for definitions). A well-know and celebrated result asserts that \emph{typical}\footnote{The notion of typical here is measure-theoretical, i.e.~it refers to \emph{almost every} IET in the sense defined before the statement of Theorem \ref{absencethm}.} IETs are uniquely ergodic (\cite{Ve:gau, Ma:int}). 

In this paper we address the question of mixing.  Let $\mu$ be the area renormalized so that $\mu(S)=1$. Let us recall that $\{ \varphi_t\}_{t\in \mathbb{R}}$ 
is said to be \emph{mixing} if for each pair $A$, $B$ of Borel-measurable sets one has
\be \label{mixingdef}
\lim_{t\rightarrow \infty} \mu(\varphi_t(A)\cap B)=\mu(A)\mu(B). 
\ee 
We recall that a \emph{saddle connection} is a flow trajectory which contains both an incoming and an outgoing saddle separatrix and that if a flow has no saddle connections then it is minimal \cite{Ma:tra}. 
The main result is the following.
\begin{thm}\label{multivalHthm}
Let $S$ be closed surface of genus $g\geq 2$ with a fixed area form and let $\{\varphi_t\}_{t\in\mathbb{R}}$ be the flow given by a multi-valued Hamiltonian associated to  a closed differential $1$-form $\omega$. Assume that  $\{\varphi_t\}_{t\in\mathbb{R}}$ has only non-degenerate saddles and no saddle connections.   For a \emph{typical} such 
 closed form $\omega$ the flow $\{\varphi_t\}_{t\in\mathbb{R}}$  is \emph{not  mixing}. 
\end{thm}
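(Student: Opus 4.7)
The plan is to reduce the problem to the special flow representation mentioned in the abstract and to apply a non-mixing result for such special flows (which will be the technical core of the paper). First, I would fix a smooth transverse segment $I$ to the flow $\{\varphi_t\}$ avoiding the saddles. Since the flow is minimal, the Poincaré first return map $T: I \to I$ is well-defined a.e., and a standard argument (suitably cutting $I$ at the first return points of the saddle separatrices) shows that $T$ is an interval exchange transformation. The first return time function $r: I \to \mathbb{R}_{>0}$ is smooth except at finitely many points, corresponding to orbits that hit a saddle before returning; near each such discontinuity, the non-degeneracy (Morse condition) of the saddles produces a logarithmic blow-up of $r$. Crucially, because the flow is Hamiltonian (area-preserving and locally defined by $\ud H$) and the saddles are Morse, the asymptotics of $r$ on the two sides of each such singularity have the \emph{same} leading coefficient, so $r$ is a roof function with \emph{symmetric} logarithmic singularities. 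Thus $(\varphi_t, \mu)$ is measurably isomorphic to the special flow built over the IET $T$ under the roof $r$.

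Next I would argue that the notion of "typical $\omega$" in Theorem~\ref{multivalHthm} transports to the notion of "almost every" IET $T$. Varying $\omega$ inside the space of closed $1$-forms with non-degenerate zeros and no saddle connection is equivalent, up to passing to the associated Abelian differential and moving inside a fixed stratum, to varying the period coordinates. The local period coordinates give a smooth parametrization in which the natural Katok--Masur--Veech measure class is locally Lebesgue, and the passage to the transverse IET is essentially the projection onto the "horizontal length" coordinates, which sends this measure class to Lebesgue on the simplex of IET data. Hence it suffices to prove that for Lebesgue-a.e.\ IET $T$ in each Rauzy class, and for the roof $r$ with symmetric logarithmic singularities coming from the construction, the special flow $T^r$ is not mixing.

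The heart of the argument is then the absence-of-mixing statement for $T^r$, which I would attack via renormalization: Rauzy--Veech (or Zorich) induction produces a sequence of nested Rokhlin towers for $T$ of exponentially decreasing base width and comparable heights. On each such tower, Birkhoff sums $r^{(n)}$ of the roof become approximately constant on most of the base, \emph{provided} the logarithmic contributions coming from passages close to the singularities of $r$ cancel. This is precisely where the symmetry of the singularities is exploited: for a typical IET, visits near a singularity from the left and from the right tend to balance along a subsequence of renormalization times, producing a Denjoy--Koksma-type estimate for the symmetrized part and leaving only a bounded fluctuation of $r^{(n)}$ on a set of almost full measure in the base. One then obtains, along this subsequence, a pair $A, B$ of Borel sets in the flow space and times $t_n \to \infty$ such that $\varphi_{t_n}(A)$ remains essentially a translate of $A$ inside the special-flow representation, so $\mu(\varphi_{t_n}(A)\cap B)$ stays bounded away from $\mu(A)\mu(B)$, contradicting (\ref{mixingdef}).

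The main obstacle, and presumably what requires the bulk of the paper, is the full-measure cancellation argument for the logarithmic singularities: one has to control, along a renormalization subsequence and for a.e.\ IET, the distance between orbits of $T$ and the singular points of $r$ so that the left and right logarithmic divergences of the partial Birkhoff sums cancel up to a bounded error. This involves a delicate Borel--Cantelli / Diophantine-type analysis on the Teichmüller flow and an understanding of the distribution of orbits of a typical IET relative to marked points, which is considerably more subtle than the analogous statement for circle rotations and constitutes the technical novelty behind Theorem~\ref{multivalHthm}.
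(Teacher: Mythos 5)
Your proposal follows essentially the same route as the paper: choose a transversal, show the first return map is an IET and the return time has logarithmic singularities with matching left/right coefficients at each saddle separatrix (hence symmetric in the sense $\sum C_i^+ = \sum C_i^-$), transport the ``typical'' notion from closed forms to Lebesgue-a.e.\ IETs via the Calabi/Katok correspondence with Abelian differentials and period coordinates, and then invoke the absence-of-mixing theorem for special flows over IETs with symmetric logarithmic singularities proved by renormalization and cancellation estimates. The reduction step and the high-level sketch of the cancellation mechanism match the paper's Section 5 and the Kochergin rigidity criterion underlying Theorem \ref{absencethm}.
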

\noindent The notion of \emph{typical} here is measure-theoretical and refers to the natural measure-class on such flows, which is obtained by pulling-back the Lebesgue measure class by the period-map. We explain the precise meaning of typical in \S\ref{reductionmultivalHsec}.  

Theorem \ref{multivalHthm} settles the open question (which appears for example in Forni \cite{Fo:dev} and in the survey \cite{KT:spe} by Katok and Thouvenot, \S6.3.2.) of whether a typical minimal multi-valued Hamiltonian flow with only simple saddles is mixing. 
Even if non-mixing, such flows are nevertheless typically \emph{weakly mixing}\footnote{\label{wmdef}Let us recall that a flow $\{\varphi_t\}_{t \in \mathbb{R}}$ preserving a probability measure $\mu$ is \emph{weakly mixing} if for each pair $A$, $B$ of measurable sets $\frac{1}{T}\int_{0}^T \left|  \mu(\varphi_t(A)\cap B) - \mu(A)\mu(B)\right| \ud t $  converges to zero as $T$ tends to infinity.} (\cite{Ul:wm}, see also \S \ref{historysec}).

Let us remark that all assumptions of Theorem \ref{multivalHthm} are crucial for the absence of mixing. Indeed, if a minimal flow has  multi-saddles, corresponding to higher order zeros of $\omega$, then $\varphi_t$ is mixing, as proved by Kochergin \cite{Ko:mix}. 
On the other hand, if the surface flow has not only simple saddles but also centers or more than one minimal component (more precisely, if there are saddle loops homologous to zero) than one can produce mixing (\cite{SK:mix, Ul:mix}, see also \S \ref{historysec}). 
  The asymptotic behavior of Birkhoff sums and its deviation spectrum for this class of flows was described by Forni in \cite{Fo:dev}.  

In the next section we recall the definitions of interval exchange transformations and special flows and  formulate the main Theorem in the setting of special flows (Theorem \ref{absencethm}, from which  Theorem  \ref{multivalHthm} will be deduced). 
Previous results on ergodic properties of special flows over IETs are recalled in \S \ref{historysec}.

\subsection{Special flows with logarithmic singularities}
Special flows give a useful tool to describe area-preserving flows on surfaces. When representing a flow on a surface (or one of its minimal components) as a special flow, it is enough to consider a transversal to the flow: the first return, or Poincar{\'e} map, to the transversal determines the base transformation $T$, while the function $f$ gives the first return time of the flow to the transversal. Different functions $f$ describe different time-reparametrizations of the same flow, hence give rise to flows which, topologically, have the same orbits. Interval exchange transformations arise naturally as first return maps (up to smooth reparametrization), see \S\ref{reductionmultivalHsec}. 



\paragraph{Interval exchange transformations.}\label{IETsdefsec}
Let $I^{(0)}=[0,1)$,  let $\pi\in \Sym{d}$, $d \geq 2$, be a permutation\footnote{We are using here the notation for IETs classically introduced by Keane \cite{Ke:int} and Veech \cite{Ve:int, Ve:gau}. We remark that recently by Marmi-Moussa and Yoccoz introduced a new labeling of IETs (see the lecture notes by Yoccoz \cite{Yo:con} or Viana \cite{Vi:IET}), which considerably facilitates the analysis of Rauzy-Veech induction. We do not recall it here, since it does not bring any simplification to our proofs.} and  let $\Delta_{d-1}$ denote the simplex of vectors $\underline{\lambda}\in \mathbb{R}_+^d$ such that $\sum_{i=1}^{d}\lambda_i=1$.   
The \emph{interval exchange transformation} (IET) of $d$ subintervals given by  $(\underline{\lambda}, \pi)$ with $\underline{\lambda} \in \Delta_{d-1}$  is the map $T:I^{(0)}\rightarrow I^{(0)}$ given by\footnote{The sums in the definition are by convention zero if over the empty set, e.g.~for $j=0$.} 
\bes
 T(x) =  x  - \sum_{i=1}^{j-1} \lambda_i + \sum_{i=1}^{j-1} \lambda_{\pi^{-1}(i)} \quad \mathrm{for} \quad x \in I^{(0)}_{j} = [ \sum_{i=1}^{j-1} \lambda_i , \sum_{i=1}^{j} \lambda_i ), \qquad j=1, \dots , d.
\ees
\noindent In other words  $T$ is a piecewise isometry which rearranges the subintervals of lengths given by $\underline{\lambda}$ in  the order determined by $\pi$. We shall often use the notation $T=(\underline{\lambda}, \pi)$.
Let $\Sigma_{\underline{\lambda},\pi} = \{\sum_{i=1}^{j} \lambda_i, \quad j=1,\dots, d  \} \cup \{0 \} $ be the set of \emph{discontinuities } of   $T$ together with the endpoints of $I^{(0)}$. 
We say that $T$ is \emph{minimal} if the orbit of all points  are dense. 
We say that the permutation $\pi \in  \Sym{d}$ is \emph{irreducible} if, whenever the subset $\{ 1, 2, \dots k \}$ is $\pi$-invariant, then $k=d$. Irreducibility is a necessary condition for minimality. Recall that $T$ satisfies the \emph{Keane condition} if the orbits of all discontinuities in $ \Sigma_{\underline{\lambda},\pi} \backslash \{ 0, 1 \} $ are infinite and disjoint. If $T$ satisfies this condition, then $T$ is minimal \cite{Ke:int}.

\paragraph{Special flows.}\label{suspflowsdefsec}
Let $f\in L^1 (I^{(0)}, dx)$ be a strictly positive function 
 and assume $\int_{I^{(0)}} f(x) dx =1$.  
Let $X_f \doteqdot \{ (x,y) \in \mathbb{R}^2 \st  x \in I^{(0)}, \, 0\leq y < f(x) \}$ be the set of points below the graph of the roof function $f$ and $\mu$ be the restriction to $X_f$ of the Lebesgue measure $\ud x\,\ud y$. 
Given  $x\in I^{(0)}$ and $r\in \mathbb{N}^+$ we denote by  $\BS{f}{r}( x)  \doteqdot \sum_{i=0}^{r-1} f(T^i(x))$ 
the $r^{th}$ non-renormalized \emph{Birkhoff sum} of $f$ along the trajectory of $x$ under $T$. By convention, $\BS{f}{0}( x) \doteqdot 0$.  
Let $t>0$. Given $x\in I^{(0)}$ denote by $r(x,t)$ the integer uniquely defined by
$r(x,t)\doteqdot \max \{ r\in \mathbb{N} \, | \quad  \BS{f}{r}(x) < t \}$.

The \emph{special flow built over}\footnote{One can define in the same way special flows over any measure preserving transformation $T$ of a probability space $(M, \mathscr{M}, \mu)$, see e.g. \cite{CFS:erg}.}  $T$ \emph{under the roof function f} is a one-parameter group $\{ \varphi_t \}_{t\in \mathbb{R}}$ of $\mu$-measure preserving transformations of $X_f$  
whose action is given, for $t>0$, by 
\be \label{flowdef}
\varphi_t(x,0) = \left( T^{r(x,t)}(x), t- \BS{f}{r(x,t)}(x)\right).
\ee
For $t<0$, the action of the flow is defined as the inverse map and $\varphi_0$ is the identity.  
Under the action of the flow a point of $(x,y) \in X_f$  moves with unit velocity along the vertical line up to the point $(x,f(x))$, then jumps instantly to the point $\left( T(x),0 \right)$, according to the base transformation.    
Afterward it continues its motion along the vertical line until the next jump and so on. 
 The integer $r(x,t)$ gives the number of \emph{discrete iterations} of the base transformation $T$ which the point $(x,0)$ undergoes when flowing up to time $t>0$. %

\paragraph{Logarithmic singularities.} \label{symmetriclog}
We consider the following class of roof functions with logarithmic symmetric singularities.  The motivation for considering special flows over IETs under such roofs is explained in  \S\ref{reductionmultivalHsec}.
Let us introduce two auxiliary functions $u$, $v$ defined on $(0,1)$ as follows 
\bes
u(x) := \frac{1}{ x}, \qquad v(x) := \frac{1}{1- x}, 
\ees
and extended to the whole real line so that they are periodic of period 1, i.e. for $x\in \mathbb{R}$, $u(x)=u(\fracpart{x})$ and  $v(x)=v(\fracpart{x})$ where $\fracpart{x}$ denotes the fractional part of $x$. 
Let $0\leq \overline{z}_0^+< \overline{z}_1^+ <\dots <\overline{z}^+_{s_1-1} < 1$ be the $s_1$ points where the roof function is right-singular (i.e.~the right limit is infinite) and 
$0<\overline{z}_0^-< \overline{z}_1^- <\dots <\overline{z}^-_{s_2-1} \leq 1$ the $s_2$  points where the roof function is left-singular (i.e.~the left limit is infinite). 
 Let us denote  $u_i (x) = u(x-\overline{z}^+_i)$ for  $i=0,\dots , s_1-1$ and $v_i(x)= v(x-\overline{z}^-_i)$, $i=0,\dots , s_2-1$. 
\begin{defn}
\label{symmsingdef} The function $f$   has \emph{logarithmic singularities} at $0\leq \overline{z}_0^+<  \dots <\overline{z}^+_{s_1 -1} < 1$ (right singularities) and $0<\overline{z}_0^- <\dots <\overline{z}^-_{s_2-1} \leq 1$ (left singularities) if 
$f \in \mathscr{C}^2$ on the complement of the union of the singularities 
and 
there exists constants $C_i^+$  $i=0,\dots, s_1-1$  and $C_i^-$ for  $i=0,\dots, s_2 -1$ and a function $g$ of bounded variation on $[0,1]$, such that
\be\label{logsing}
f'=   \sum_{i=0}^{s_2 -1}  C_i^-  v_i  -  \sum_{i=0}^{s_1 -1} C_i^+ u_i  + g.
\ee
The logarithmic singularities are called \emph{symmetric} if moreover $\sum_{i=0}^{s_2 -1} C_i^- = \sum_{i=0}^{s_1-1} C_i^+$.
\end{defn}
\noindent 
We remark that the derivative $f'$ of a function with symmetric logarithmic singularities is not integrable. 
An example of a function with logarithmic singularities at  $\overline{z}_0^{+}=0$, $\overline{z}_0^{-}=1$ and $\overline{z}_i^{\pm} := \overline{z}_i$ for $i=1,\dots, s-1$ (here $s_1=s_2=s$) is given by 
\bes\label{logsymex}
f(x) = C_0^+ |\ln(x)| + \sum_{i=1}^{s-1} \left( C_i^+ |\ln \left(\left\{x-\overline{z}_i\right\}\right)| + C_i^-  |\ln \left(\left\{\overline{z}_i - x\right\}\right)| \right) +  C_0^- |\ln(1-x)|.
\ees

\paragraph{Absence of mixing}\label{statementssec}
The main Theorem that we prove in this context is the following. Here and in the rest of the paper we will say that a result holds for \emph{almost every IET} if it holds for any \emph{irreducible} permutation $\pi$ on $d\geq 2$ symbols and a.e.~choice of the length vector $\underline{\lambda}\in \Delta_{d-1}$ with respect to the restriction of the $d$-Lebesgue measure to the simplex $\Delta_{d-1}$.  
\begin{thm}\label{absencethm} For almost every IET $T$ 
the special flow $\{\varphi_t \}_{t\in \mathbb{R}}$ built over $T =(\underline{\lambda}, \pi)$ 
under a roof function $f$ with  symmetric logarithmic singularities at a subset of the singularities $\Sigma_{\underline{\lambda},\pi}$ of $T$ is \emph{not mixing}. \end{thm}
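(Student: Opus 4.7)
To establish absence of mixing it is enough to produce, for a.e.\ IET, a constant $c>0$, a sequence of times $t_n\to\infty$, and a measurable set $G\subset X_f$ with $0<\mu(G)<c$ such that
\bes
\liminf_{n\to\infty}\mu\bigl(\varphi_{t_n}(G)\cap G\bigr)\ \geq\ c\,\mu(G)\ >\ \mu(G)^2,
\ees
in direct conflict with mixing as defined in \eqref{mixingdef} applied to $A=B=G$. The plan is to establish such \emph{partial rigidity} by constructing $G$ out of bases of Rokhlin towers for the Rauzy-Veech renormalization on which the flow at time $t_n$ is uniformly close to the identity.

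\textbf{Reduction to a Birkhoff-sum estimate.} For a.e.\ IET, the Rauzy-Veech-Zorich renormalization provides a sequence of nested subintervals $I^{(n)}\subset I^{(0)}$ and a Rokhlin-tower decomposition: $I^{(0)}$ is partitioned into floors $I_j^{(n)}$, each of which returns to $I^{(n)}$ after $q_j^{(n)}$ iterations of $T$, with $T^k$ an isometry on $I_j^{(n)}$ for $0\leq k<q_j^{(n)}$. By the Oseledets theorem applied to the Kontsevich-Zorich cocycle one may pass to a subsequence along which all heights $q_j^{(n)}$ are comparable to a common scale $q^{(n)}\to\infty$ (a \emph{balanced} sequence) and $|I^{(n)}|\to 0$. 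Fix one floor $I_{j_0}^{(n)}$ and a basepoint $x_0\in I_{j_0}^{(n)}$, and set $t_n:=\BS{f}{q_{j_0}^{(n)}}(x_0)$. Formula \eqref{flowdef} gives
\bes
\varphi_{t_n}(x,0)=\bigl(T^{q_{j_0}^{(n)}}(x),\ t_n-\BS{f}{q_{j_0}^{(n)}}(x)\bigr),\qquad x\in I_{j_0}^{(n)}.
\ees
Since $T^{q_{j_0}^{(n)}}|_{I_{j_0}^{(n)}}$ is a translation of amplitude $O(|I^{(n)}|)$, showing that $\varphi_{t_n}$ is uniformly close to the identity on $I_{j_0}^{(n)}\times\{0\}$ reduces to a uniform bound $|\BS{f}{q_{j_0}^{(n)}}(x)-\BS{f}{q_{j_0}^{(n)}}(x_0)|\leq M$ on a subset of $I_{j_0}^{(n)}$ of relative measure $\geq 1-\varepsilon$. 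By the mean value theorem this in turn reduces to controlling $(\BS{f}{q_{j_0}^{(n)}})'(x)=\sum_{k=0}^{q_{j_0}^{(n)}-1}f'(T^kx)$ on a large subset of the floor.

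\textbf{Symmetric cancellation and the main obstacle.} By Definition \ref{symmsingdef}, $f'$ is a signed sum of translates $u_i$ and $v_i$ of the $1/x$-type kernels anchored at the singularities $\overline{z}^+_i$ and $\overline{z}^-_i$, plus a bounded-variation remainder $g$. Because the orbit $\{T^kx\}_{0\leq k<q^{(n)}}$ visits each Rokhlin floor of the $n$-th stage exactly once, and $T^k$ acts as an isometry on each floor, the visits to a neighborhood of a given singularity $\overline{z}^{\pm}_i$ are combinatorially determined by the tower structure: at each dyadic scale there is essentially one closest visit from the left and one from the right. The symmetry condition $\sum_i C_i^-=\sum_i C_i^+$ is precisely what makes the divergent contributions of these paired closest visits cancel to leading order, leaving an error which is logarithmic in the \emph{ratio} of the left-to-right distances rather than logarithmic in a single distance. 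The principal technical obstacle will be to control the ``anomalous'' indices $k$ for which $T^kx$ falls unusually close to a singularity and the symmetric pairing degenerates; I expect to handle these by excising from $I_{j_0}^{(n)}$ a subset of relative measure $<\varepsilon$ on which such close approaches occur, using a Borel-Cantelli-type estimate exploiting the Diophantine behavior of the $T$-orbits of the singularities $\overline{z}^{\pm}_i\in\Sigma_{\underline{\lambda},\pi}$ for almost every IET. After the excision, telescoping the per-pair cancellations across the floors of the tower and absorbing the $g$-contribution via its bounded variation yields the required uniform bound, producing partial rigidity for $\{\varphi_t\}$ and hence its non-mixing.
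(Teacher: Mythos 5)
Your overall strategy is the right one and matches the paper: you reinvent what the paper calls Kochergin's absence-of-mixing criterion (Lemma \ref{absencemixingcriterion}), you pass to a balanced Oseledets subsequence of Rauzy--Veech--Zorich towers, you identify the symmetry $\sum C_i^-=\sum C_i^+$ as the source of the cancellation, and you correctly see that the whole problem reduces to a uniform bound on the Birkhoff sums $\BS{f'}{r_k}$. However, the two pieces on which you are vaguest are precisely the two pieces that carry the weight of the argument, and your proposed fix for the second is not the one the paper uses and would not by itself close the gap.

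First, the cancellation mechanism. Your heuristic that ``at each dyadic scale there is essentially one closest visit from the left and one from the right'' does not reflect how the cancellation actually works. The paper (Corollary \ref{deviationsxjyj}) rearranges the distances of the orbit from each singularity in increasing order, $x_i(0)<x_i(1)<\dots$ and $y_i(0)<y_i(1)<\dots$, and proves that both $x_i(j)$ and $y_i(j)$ equal $j\,\lambda^{(b_k)}_{j_0}/\delta^{(b_k)}_{j_0}$ up to a power-saving error $O\bigl(\Th{k}{k^\pm_i(j)}(\norm{B_k}j)^\gamma\bigr)$. This ``deviation from an arithmetic progression'' is what makes the paired terms $C_i^-/y_i(j)-C_i^+/x_i(j)$ cancel to leading order when summed. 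Establishing the quantitative deviation estimate is the entire content of Sections \ref{alldeviationssec}--\ref{cancelletionssec}: one needs the balanced acceleration of the Zorich cocycle via Hilbert-metric contraction (Lemma \ref{ergodicconvergenceN}), an Oseledets/Egorov acceleration to put the error in power form (Lemma \ref{powercontrolN}), and a further subexponential acceleration (Lemma \ref{subexpgrowthlemma}) so that the weighted sums of the error functional $\Th{k}{k'}$ converge. Your sketch offers no route to this estimate, and it is where the ``a.e.\ IET'' hypothesis is actually consumed.

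Second, the closest approaches. Your proposal to excise a subset of small measure via a Borel--Cantelli argument using ``Diophantine behavior of the $T$-orbits'' is not what the paper does, and it is a detour: the paper never invokes any Diophantine condition on the orbits. Instead it keeps the closest-approach contributions visible in the bound --- Propositions \ref{boundSf'growthprop} and \ref{generalsumprop} give $\bigl|\BS{f'}{r}(z_0)\bigr|\leq M r+\sum_i C_i^+/x_i^{min}+\sum_i C_i^-/y_i^{min}$ --- and then controls $x_i^{min},y_i^{min}$ deterministically by choosing the base $J_k$ of the rigidity set to be the \emph{middle half} of $(I^{(n_k)}_{j_0})_{l_0}$. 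Because $T^i$ rigidly translates $(I^{(n_k)}_{j_0})_{l_0}$ for $i<r_k$ and the singularities lie on boundaries of the Rokhlin floors (they are drawn from $\Sigma_{\underline\lambda,\pi}$), every orbit point of every $z_0\in J_k$ stays at distance $\geq\lambda/4$ from every singularity; no excision, no probabilistic estimate. Finally, a point you skip entirely: your set $G$ must be a thickened tower to have positive $\mu$-measure, and for $(x,y)\in G$ with $y>0$ the number of base iterations under $\varphi_{t_n}$ is offset from $r_k$. The paper handles this by splitting the Birkhoff variation into a central term (mean value theorem plus the along-a-tower bound, Proposition \ref{boundSf'growthprop}) and two edge terms, which require the separate estimate of Proposition \ref{generalsumprop} for \emph{partial} Birkhoff sums $\BS{f'}{r}$ with $0<r\leq h^{(n_k)}_{j_0}$. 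That step has no counterpart in your outline and cannot be folded into the mean value theorem alone.
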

\noindent 
It is worth remarking that nevertheless special flows with logarithmic singularities over  typical IETs are weakly mixing\footnote{
The definition of weak mixing was recalled in Footnote \ref{wmdef} (page \pageref{wmdef}).
}, as proved by the author in \cite{Ul:erg, Ul:wm}.  We show in \S \ref{reductionmultivalHsec} that flows on surfaces given by multi-valued Hamiltonians can be represented as flows over IETs with logarithmic  singularities and that Theorem \ref{multivalHthm} can be deduced from Theorem \ref{absencethm}.

\subsection{Ergodic properties of logarithmic special flows}\label{historysec}
\paragraph{Flows over rotations}
Assume first that
 the base transformation is a rotation of the circle (i.e. the map $R_{\alpha} x = x+ \alpha \mod 1$), which can be seen as an interval exchange of $d=2$ intervals. Kochergin proved in \cite{Ko:non} that   special flows with \emph{symmetric} logarithmic singularities  \cite{Ko:non} are not mixing for a.e.~$\alpha$. Recently, in \cite{Ko:non2}, he shows that absence of mixing holds for all $\alpha$. An intermediate result for $s_1=s_2=1$ and $C_0^-=C_0^+$ is a consequence of \cite{Le:sur}. 
In \cite{FL:ons} Fr{\c a}czek and Lema{\'{n}}czyk consider the 
 roof function $f(x) = |\ln x |/2 $$+ $$ | \ln (1-x)|/2$ 
 and show that the corresponding special flow over $R_{\alpha}$ is \emph{weakly mixing} for all $\alpha$.  They also
  push the investigation to more subtle spectral properties, showing  in \cite{FL:acl} that such flows are spectrally disjoint from all mixing flows. 

On the other hand, if the roof has \emph{asymmetric logarithmic singularities}, 
 Khanin and Sinai proved  in \cite{SK:mix} that, under a diophantine condition on the rotation angle which holds for a full measure set of $\alpha$, the corresponding special flow  is \emph{mixing}, answering affirmatively to a question asked by Arnold in  \cite{Ar:top}. The diophantine condition of \cite{SK:mix} was weakened by Kochergin 
in a series of works (\cite{Ko:nonI, Ko:nonII, Ko:som, Ko:wel}).

\paragraph{Flows over IETs}
In \cite{Ul:erg, Ul:mix} the author proved that special flows over typical IETs under a roof function $f$ having a single \emph{asymmetric} logarithmic singularity at the origin (i.e.~as in Definition \ref{symmsingdef} with $s_1=s_2=1$ and $C_0^+\neq C_0^-$) are \emph{mixing}. The same  techniques can be applied  to the situation of several logarithmic singularities as long as the roof satisfy the asymmetry condition  $\sum C_i^+ \neq \sum C_i^-$. 
Let us also recall that if the singularities instead than logarithmic are power-like (i.e. $f$ blows up near singularities as $1/x^{\alpha}$ for $\alpha >0$), then  
mixing was proved by Kochergin, \cite{Ko:mix}.

If the singularities are \emph{symmetric}, two results in special cases were recently proved. 
The author showed in \cite{Ul:erg} the absence of mixing if the IET on the base satisfies a condition which is similar to $\alpha$ being bounded type for rotations (which in particular holds only for a mesure zero set of IETs).  Scheglov recently showed in \cite{Sch:abs} that if $\pi=(54321)$, for a.e.~$\underline{\lambda}$ the special flow over $(\underline{\lambda}, \pi)$ under a particular class of functions  with symmetric logarithmic singularities\footnote{A function $f$ with symmetric logarithmic singularities, as defined in  \cite{Sch:abs}, is such that $f'$ is a linear combination of the functions $f_i(x)=1/(b_i-x) -1/(x-a_i)$ defined on the interior of the IET subintervals $I^{(0)}_i = [a_i,b_i)$ for $i=1,\dots ,d$ and the function $1/(1-x)-1/x$. In particular $s_1=s_2$ and constants come in pairs $\{C_i^+,C_i^-\}$ such that $C_i^+=C_i^-$. Thus, this class is more restrictive than the one given by Definition \ref{symmsingdef}. \label{Scheglovsymm}
}  is not mixing, from which it follows that Theorem \ref{multivalHthm} holds in the special case in which $g=2$ and the flow has two isometric saddles. Unfortunately, his methods does not extend to higher genus, for the reasons explained in the remark at the end of \S \ref{cancelletionssec} (page \pageref{Scheglov}).  


It is worth recalling also that interval exchanges themselves are not mixing and that special flows over IETs are never mixing if the function $f$ is of bounded variation (both results were proved by Katok in \cite{Ka:int}).  
On the other side, Avila and Forni \cite{AF:wea} showed that IETs which are not of rotation-type are typically weakly mixing and that special flows over IETs with piecewise constant roofs are also typically weakly mixing. 

\section{Background on cocycles and Rauzy-Veech induction}
\subsection{Some properties of cocycles}
Let $(X,\mu, F)$ be a discrete dynamical system, where $(X, \mu)$ is a probability space and $ F$ is a $\mu$-measure preserving map on $X$. A a measurable map $A : X  \rightarrow SL(d,\mathbb{Z})$ ($d\times d$ invertible matrices) determines a cocycle $A$ on  $(X,\mu, F)$. If we  denote by  $ A_n (x) = A(F^n x)$ and by 
 $A_F^{n}(x) = A_{n-1} (x)  \cdots A_1(x)  A_0 (x) $,  
the following \emph{cocycle identity}
\be \label{cocycleid}
A_F^{m+n}(x) = A_F^{m}(F^n x)A_F^{n}(x)
\ee
holds for all $m,n \in \mathbb{N}$ and  for all $x \in X$. 
If $F$ is invertible, let us set $A_{-n}(x) = A(F^{-n}x)$. The map $A^{-1}(x) = A(x)^{-1}$ gives a cocycle over $F^{-1}$ which we call \emph{inverse cocycle}.

If $Y \subset X$ is a measurable subset, the \emph{induced cocycle} $A_Y$ on $Y$ is a cocycle over $(Y, \mu_Y , F_Y)$ where $F_Y$ is the induced map of $F$ on $Y$ and $\mu_Y = \mu/\mu(Y)$  and $A_Y(y)$ is defined for all $y\in Y$ which return to $Y$ and is given by
\bes
A_Y(y) = A
(F^{r_Y(y) -1 } y
)
\cdots A\left(F y \right) A\left(y\right) ,
\ees
where $r_Y(y)= \min \{ r \st F^r y \in Y\} $ is the first return time. 
The induced cocycle is an acceleration of the original  cocycle, i.e. if $\{n_k\}_{k\in \mathbb{N}} $ is the infinite sequence of return times of some $y\in Y$ to $Y$ (i.e. $T^{n} y \in Y$ iff $n=n_k$ for some $k\in \mathbb{N}$ and $n_{k+1}> n_k$) then 
\be\label{acceleration}
(A_Y)_k(y) =A_{n_{k+1}-1 }(y)   \cdots   A_{n_{k}+1}(y) A_{n_k}(y) .   
\ee
We say that $x \in X$ is \emph{recurrent} to $Y$ if there exists an infinite increasing sequence $\{n_k\}_{k\in \mathbb{N}} $ such that $T^{n_k} x \in Y$. Let us extend the definition of the induced cocycle $A_Y$ to all $x \in X$ recurrent to $Y$. 
If the sequence  $\{n_k\}_{k\in \mathbb{N}} $ is increasing and  contains all $n\in \mathbb{N}^+$ such that  $T^{n} x \in Y$, let us say that $x $ \emph{recurs to} $Y$ \emph{along}  $\{n_k\}_{k\in \mathbb{N}} $. In this case,  let us set
\bes
A_Y(x) :=  A( y ) A^{n_0}_F (x) , \quad \mathrm{where}\,\,  y:= F^{n_0} x \in Y;  \qquad (A_Y)_{n}(x) :=  (A_Y)_n (y ), \qquad  \mathrm{for}\,\, 
  n \in \mathbb{N}^+. 
\ees
If $F$ is ergodic, $\mu$-a.e.~$x \in X$ is recurrent to $Y$ and hence $A_Y$ is defined on a full measure set of $ X$.

In the rest of the paper, we will use the norm  $\norm{A}  = \sum _{ij} |A_{ij}|$  on matrices (more in general, the same results on cocycles hold for any norm on $SL(d,\mathbb{Z})$). Remark that with this choice $\norm{ A } = \norm{ A ^T }$.  
A cocycle over $(X , F, \mu)$ is called \emph{integrable} if $\int_X \ln \|A(x)  \| \ud \mu(x) < \infty$. Integrability is the assumption which allows to apply Oseledets Theorem. 
Let us recall the following properties of integrable cocycles\footnote{The integrability of the dual cocycle stated in Remark \ref{inducedintegrable} $(i)$ is proved in Zorich \cite{Zo:dev} and since $\norm{A^{-1}} = \norm{(A^{-1})^{T}}$, the integrability of the inverse cocycle follows. The proof of  Remark \ref{inducedintegrable} $(ii)$  if $F$ is invertible follows from the Kac's lemma representation of the space as towers and the non-invertible case can be reduced to the invertible one by considering the natural extension of $F$.}. 
\begin{rem}\label{inducedintegrable}
If $A$ is an integrable cocycle over $(X , F, \mu)$ assuming values in $SL(d,\mathbb{Z})$, then
\begin{itemize} \item[(i)] The dual cocycle $(A^{-1})^{T}$ and, if $F$ is invertible,  the inverse cocycle $A^{-1}$ over  $(X , F^{-1}, \mu)$  are integrable;
\item[(ii)] Any induced cocycle $A_Y$ of $A$ on a measurable subset $Y \subset X$ is integrable. 
\end{itemize}
\end{rem}
In \S \ref{Rauzysec} we will consider the Rauzy-Veech Zorich cocycle for IETs and in \S \ref{absencesec} we will use various accelerations constructed using the following two Lemmas. For $m<n $, let us denote by\footnote{The reader should remark that here the order of the matrices in the product is the inverse order than the one used in (\ref{cocycleid}). This notation is convenient since we will apply it to matrices  $Z$ where $Z^{-1}$ is the Rauzy cocycle.}  
\bes A^{(m,n)}=  A_m A_{m+1} \dots A_{n-1} .
\ees
\begin{lemma}\label{expgrowthlemma}
Assume $A^{-1}$ be an integrable cocycle over an ergodic and invertible $(X,\mu, F)$. 
There exists a measurable $E_1 \subset X$  with positive measure\footnote{The same proof gives that for each $\epsilon>0$ there exists $E_1$ with $\mu(E_1) > 1-\epsilon$.}  and a constant $\overline{C}_1>0$ such that for all $x\in X$ recurrent to $E_1$ along the sequence $\{n_k \}_{k\in \mathbb{N}}$ we have
\be\label{expgrowth}
\frac{ \ln \| A^{(n, n_k ) } (x) \|}{n_k - n} \leq \overline{C}_1, \qquad  \forall \, 0 \leq n < n_k .
\ee 
\end{lemma}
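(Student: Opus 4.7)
The plan is to bound $\log\|A^{(n,n_k)}(x)\|$ by a Birkhoff sum of $\log\|A\|$ along the \emph{backward} orbit of the right endpoint $y := F^{n_k}x$ (which lies in $E_1$ by hypothesis), and to choose $E_1$ so that such backward sums are uniformly controlled on $E_1$. The conceptual payoff of re-anchoring on the right is that the uniformity required in the starting position $n \in [0,n_k)$ becomes uniformity in the length $m := n_k-n$ of a suffix of the backward orbit of a \emph{fixed} good point $y\in E_1$, which is precisely the content of a maximal ergodic estimate for $F^{-1}$.

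First, integrability of $A^{-1}$ is equivalent to integrability of $A$: for $M \in SL(d,\mathbb{Z})$, Cramer's rule together with $\det M = \pm 1$ express the entries of $M^{-1}$ as signed $(d{-}1)$-minors of $M$, yielding a dimensional constant $c_d$ with $\|M\|\leq c_d\|M^{-1}\|^{d-1}$, so $\log\|A\|\in L^1(\mu)$. Using submultiplicativity of $\|\cdot\|$ and the change of index $i = n_k - j$,
\[
\log\|A^{(n,n_k)}(x)\| \;\leq\; \sum_{j=n}^{n_k-1}\log\|A(F^j x)\| \;=\; \sum_{i=1}^{m}\log\|A(F^{-i}y)\|,
\]
where $y = F^{n_k}x$ and $m = n_k - n$.

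Since $F^{-1}$ is $\mu$-preserving and ergodic and $\log\|A\|\in L^1(\mu)$, Birkhoff's theorem for $F^{-1}$ yields that the averages $\tfrac{1}{m}\sum_{i=1}^{m}\log\|A(F^{-i}y)\|$ converge $\mu$-a.e.\ to $\int \log\|A\|\,d\mu$. Combined with a.e.\ finiteness of each term (or, more cleanly, with the maximal ergodic theorem), this implies that
\[
M(y) \;:=\; \sup_{m\geq 1} \frac{1}{m}\sum_{i=1}^{m}\log\|A(F^{-i}y)\|
\]
is finite $\mu$-a.e. Pick $\overline{C}_1>0$ large enough that $E_1 := \{y : M(y)\leq \overline{C}_1\}$ has positive measure (the argument in fact gives $\mu(E_1)>1-\varepsilon$ for any prescribed $\varepsilon$). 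If $x$ is recurrent to $E_1$ along $\{n_k\}$, then $y=F^{n_k}x\in E_1$ for every $k$, and the display above gives $\log\|A^{(n,n_k)}(x)\|\leq m\,M(y)\leq \overline{C}_1(n_k-n)$ for every $0\leq n<n_k$, which is (\ref{expgrowth}). The main (mild) obstacle is precisely that this bound must hold uniformly for every $n\in[0,n_k)$, whereas $F^n x$ itself need not lie in $E_1$ for such $n$; the backward-anchoring trick is what circumvents this and converts the problem to one to which the maximal ergodic theorem applies directly.
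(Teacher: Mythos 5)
Your proof is correct, and it takes a genuinely different route from the paper's. The paper applies Oseledets/Kingman's subadditive ergodic theorem directly to $\ln \| A_{F^{-1}}^{m}\|/m$ to get pointwise convergence, then invokes Egorov's theorem to find a positive-measure set $E_1$ on which the convergence is uniform, with a separate bound on the finitely many initial terms. You instead use submultiplicativity of the norm to bound $\ln\|A^{(n,n_k)}(x)\|$ from above by an \emph{additive} Birkhoff sum $\sum_{i=1}^m \ln\|A(F^{-i}y)\|$ with $y=F^{n_k}x$, and then apply Birkhoff's ergodic theorem (or the maximal ergodic theorem) to conclude that the maximal function $M(y)=\sup_m \tfrac{1}{m}\sum_{i=1}^m\ln\|A(F^{-i}y)\|$ is finite a.e.; $E_1$ is a sub-level set of $M$. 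Both approaches hinge on the same ``right-anchoring'' observation that uniformity over $n\in[0,n_k)$ translates into uniformity in the length $m=n_k-n$ of a backward orbit of the fixed good point $y\in E_1$. What you buy is elementarity: you avoid Kingman's theorem and Egorov's theorem entirely, replacing them with submultiplicativity plus Birkhoff, and the maximal-function formulation handles the small-$m$ terms without needing the separate uniform bound on $\norm{A_{-m}}$ for $m<\overline m$ that the paper's Egorov argument requires. The price is a potentially larger constant $\overline C_1$ (of order $\int \ln\|A\|\,d\mu$ rather than the top Lyapunov exponent), but that is irrelevant to the application. Your direct derivation of integrability of $\ln\|A\|$ from that of $\ln\|A^{-1}\|$ via the adjugate/$\mathrm{SL}(d,\mathbb{Z})$ bound $\|M\|\leq c_d\|M^{-1}\|^{d-1}$ is also a self-contained substitute for the paper's citation to Remark~\ref{inducedintegrable}.
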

\begin{proof}
Since $A^{-1}$ is integrable, by Remark \ref{inducedintegrable} $(i)$, also the inverse cocycle $A$ over $(X, \mu, F^{-1})$  is integrable. Hence, by Oseledets theorem, the functions $\ln \| A_{F^{-1}}^{ m } \| /  m  $ converge pointwise. There exists a set $E_1$ of positive measure such that by Egorov theorem the convergence is uniform, so that $\ln \| A_{F^{-1}}^{m }(x) \| \leq c  m  $ for some $c>0$ and all $x\in E_1$ and all $m\geq \overline{m}>0$  and at the same time $\norm{ A_{- m } (x)} $ for $0\leq m < \overline{m}$ are uniformly bounded. 
Thus, if $F^{n_k} x \in E$, we have $\ln \norm{A_{F^{-1}}^{m }\left(F^{n_k} x\right)} \leq C m$ for some $C>0$ and all $m\geq 0$. Hence, since $A_{F^{-1}}^{m }\left(F^{n_k} x\right)= A^{(n_k - m +1,n_k+1)}(x) $, changing indexes by $n=n_k-m+1$, we get  (\ref{expgrowth}).
\end{proof}
\begin{lemma}\label{subexpgrowthlemma}
Under the same assumptions of Lemma \ref{expgrowthlemma}, for each $\epsilon>0$ there exists a  measurable $E_2 \subset X$ with 
positive measure 
and a constant $\overline{C}_2>0$ such that if $x\in X$ is recurrent to $E_2$ along the sequence $\{m_k \}_{k\in \mathbb{N}}$, we have 
\be\label{subexpgrowth}
 \| A_{m_k - n  } (x) \|  \leq \overline{C}_2 e^{\epsilon n }, \qquad \forall \,  0\leq n \leq m_k. 
 \ee 
\end{lemma}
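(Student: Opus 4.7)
The key observation is that $A_{m_k-n}(x) = A(F^{m_k-n}x)$ denotes a single matrix-value of the cocycle, not a product. Setting $y := F^{m_k}x$, we have $A_{m_k-n}(x) = A(F^{-n}y)$, so it suffices to find a set $E_2$ of positive measure on which $\|A(F^{-n}y)\| \leq \overline{C}_2 e^{\epsilon n}$ for every $n \geq 0$, and then take $y = F^{m_k}x$.

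First, by Remark \ref{inducedintegrable}(i), integrability of $A^{-1}$ over $(X,F,\mu)$ implies that $\phi(x) := \ln \|A(x)\|$ lies in $L^1(\mu)$. Apply Birkhoff's ergodic theorem to $\phi$ on the ergodic invertible system $(X,F^{-1},\mu)$: the Ces\`aro averages $\frac{1}{N}\sum_{n=0}^{N-1} \phi(F^{-n}y)$ converge $\mu$-a.e.\ to a finite limit, from which it follows that $\phi(F^{-n}y)/n \to 0$ for $\mu$-a.e.\ $y$.

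Next, for fixed $\epsilon > 0$ define the measurable function
\[ M_\epsilon(y) := \sup_{n \geq 0} \bigl( \phi(F^{-n}y) - \epsilon n \bigr). \]
Since $\phi(F^{-n}y)/n \to 0$ a.e., we have $\phi(F^{-n}y) - \epsilon n \to -\infty$ a.e., so $M_\epsilon$ is a.e.\ finite. Thus $\mu\{M_\epsilon \leq \ln \overline{C}_2\} \nearrow 1$ as $\overline{C}_2 \to \infty$, and we may pick $\overline{C}_2$ so that $E_2 := \{M_\epsilon \leq \ln \overline{C}_2\}$ has positive measure (in fact arbitrarily close to $1$, by analogy with Lemma \ref{expgrowthlemma}). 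By the definition of $E_2$, for any $y \in E_2$ and any $n \geq 0$ we obtain $\|A(F^{-n}y)\| \leq \overline{C}_2 e^{\epsilon n}$. Substituting $y = F^{m_k}x$ for $x$ recurrent to $E_2$ along $\{m_k\}$ then yields (\ref{subexpgrowth}) in the range $0 \leq n \leq m_k$.

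The subtle point is that a uniform bound on $\phi(F^{-n}y)$ is needed starting from $n = 0$, not only for $n$ large. A direct Egorov-type argument as in Lemma \ref{expgrowthlemma} would give uniformity only for $n$ beyond some threshold $\overline{n}$, after which one would still need to control $\|A(F^{-n}y)\|$ separately for $0 \leq n < \overline{n}$ on the good set; since $\phi$ may be unbounded, that route does not close. Packaging the entire range $n \geq 0$ into the single supremum $M_\epsilon$ and restricting to one of its sublevel sets sidesteps this issue and produces one clean bound of the required form.
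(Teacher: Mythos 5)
Your proof is correct, and the $M_\epsilon$-supremum packaging is a clean way to obtain a single bound valid for all $n \geq 0$ at once. The paper's own proof uses the same core input (ergodicity of $F^{-1}$ plus integrability of $\ln\|A\|$ gives $\ln\|A(F^{-n}y)\|/n \to 0$ a.e.) but proceeds via Egorov to get uniformity for $n \geq \overline{m}$, then further shrinks the set to control the finitely many remaining values $0 \leq n < \overline{m}$. Your closing remark that this Egorov route ``does not close'' because $\phi$ may be unbounded is however mistaken: unboundedness of $\phi$ on $X$ is irrelevant, because $\phi$ is a.e.\ finite, so for each of the finitely many $m < \overline{m}$ the function $\phi \circ F^{-m}$ is bounded off a set of small measure, and intersecting these finitely many sublevel sets with the Egorov set still leaves positive measure. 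That extra intersection is exactly the paper's parenthetical step ``in order to bound also $\|A(F^{-m}x)\|$ for $x \in E_2$, $0 \leq m \leq \overline{m}$''. So both routes close; yours absorbs the head-and-tail case split into a single a.e.-finite measurable function, which is tidier but not essentially different.
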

\begin{proof}
Recall that since $F^{-1}$ is ergodic, if $f$ is integrable, the functions $\{ f\circ F^{-m}/m  \}_{m\in \mathbb{N}}$ converge to zero   for a.e.~$x \in X$ 
 and hence, by Egorov theorem, are eventually uniformly less than $\epsilon$ on some positive measure set for $m\geq \overline{m}$. Since $A^{-1}$ is integrable, also $A$ is integrable (Remark \ref{inducedintegrable} $(i)$) and applying this observation to $f= \ln \| A \| $, we can find a smaller positive measure set $E_2$ and $\overline{C}_2>0$ (in order to bound also $ \norm{ A ( F^{-m} x ) }$ for $x \in E_2$,  $0\leq m \leq \overline{m}$) such that if $y \in E_2$, we have $ \norm{ A (F^{-n}  y)} \leq \overline{C}_2 e^{\epsilon n} $ for all $n \geq 0$. When $y= F^{m_k}x \in E_2$, this gives (\ref{subexpgrowth}). 
\end{proof}

\subsection{Rauzy-Veech-Zorich cocycle}\label{Rauzysec}
\paragraph{Rauzy-Veech and Zorich algorithms.}
The Rauzy-Veech algorithm and the associated cocycle were originally introduced and developed in the works by Rauzy  and Veech \cite{Ra:ech, Ve:int, Ve:gau} and proved since then to be a powerful tool to study interval exchange transformations. 
If $T=(\underline{\lambda}, \pi)$  satisfies  Keane's condition recalled in \S\ref{IETsdefsec}, which holds by \cite{Ke:int}) for a.e.~IET,  the Rauzy-Veech algorithm produces a sequence of IETs  which are induced maps of $T$ onto a sequence of nested subintervals contained in $I^{(0)}$. The intervals are chosen so that the induced maps are again a IETs of the same number $d$ of exchanged intervals. 
For the precise definition of the algorithm, we refer e.g. to the recent lecture notes by Yoccoz \cite{Yo:con} or Viana \cite{Vi:IET}. We recall here only some basic definitions and properties needed in the proofs.

Let us use, here and in the rest of the paper, the vector norm $|\underline{\lambda}|=\sum_{i=1}^{d}\lambda_i$. If $I'\subset I^{(0)}$ is the subinterval associated to one step of the algorithm and $T'$ is the corresponding induced IET, the \emph{ Rauzy-Veech map}  $\R $ associates to $T$ the IET $\R (T)$  obtained by renormalizing $T'$ by $\Leb{I'}$ so that the renormalized IET is again defined on an unit interval.
The natural domain of definition of the map $\R$ is a full Lebesgue measure subset of the space $X:= \Delta_{d-1} \times \R(\pi)$ where $\R(\pi) $ is the Rauzy class\footnote{Let us recall that the Rauzy class of $\pi$ is the subset  of all permutations $\pi'$ of $d$ symbols which appear as permutations  of an IET $T'=(\underline{\lambda}', \pi')$ in the orbit under $\R$ of some IET $(\underline{\lambda}', \pi)$  with initial permutation $\pi$.} of the permutation $\pi$.

Veech proved in \cite{Ve:gau} that $\R$ admits an invariant measure $\mu_{\V}$ which 
 is absolutely continuous with respect to the Lebesgue measure, but infinite. 
Zorich showed in \cite{Zo:fin} that one can accelerate\footnote{The acceleration of a map is obtained a.e.-defining  an integer valued function $z(T)$ which gives the return time to an appropriate section. The accelerated map is then given by $\Z (T) := \R^{z(T)} (T )$.} the map  $\R $ in order to obtain a map $\Z$, which we call \emph{Zorich map}, that admits a  \emph{finite} invariant measure $\mu_{\Z}$. Let us also recall that both $\R$ and its acceleration $\Z$ are \emph{ergodic} 
 with respect to $\mu_{\R}$ and $\mu_{\Z}$ respectively \cite{Ve:gau}.
 Let us recall the definition of the cocycle associated by the algorithm to the map $\Z$.

\paragraph{Lengths-cocycle.}
Consider the Zorich map $\Z$ on $X= \Delta_{d-1} \times \R(\pi)$. We denote by $\{I^{(n)}\}_{n\in\mathbb{N}}$ the sequence of inducing intervals corresponding to the Zorich acceleration of the Rauzy-Veech algorithm (well defined if $T$ satisfies the Keane's condition). Let $T^{(n)}= \Z^n (T)$ be the renormalized induced IET, which is given by $T^{(n)}:=(\pi^{(n)}, \underline{\lambda}^{(n)}/\lambda^{(n)})$, where $\lambda^{(n)}= | \underline{\lambda}^{(n)}|= Leb(I^{(n)})$. For each $T= T^{(0)}$ for which $\Z (T) = (\pi^{(1)}, \underline{\lambda}^{(1)}/\lambda^{(1)} )$ is defined, let us associate to $T$ the matrix $Z=Z(T)$ in $SL(d, \mathbb{Z})$  such that $\underline{\lambda}^{(0)}= Z \cdot  \underline{\lambda}^{(1)}$. The map  $Z^{-1}$: $X \rightarrow SL(d,\mathbb{Z})$ is a cocycle over $(X, \mu_{\Z},  \Z ) $ , which we call the \emph{Zorich lengths-cocycle}. Zorich proved in \cite{Zo:fin} that $Z^{-1}$  is \emph{integrable}. 

Defining  $\RL{n} = \RL{n} (T) \doteqdot  Z(\Z^n (T))$ and $\RLp{n} \doteqdot \RL{0} \cdot \dots \cdot \RL{n-1}$  and iterating the lengths relation, we get   
\begin{equation} \label{lengthsrelation}
 \underline{\lambda} ^{(n)} = \left( \RLp{n} \right) ^{-1 }  \underline{\lambda}, \qquad \mathrm{where} 
 \quad \Z ^n (T) := \left(\frac{\underline{\lambda}^{(n)}}{\lambda^{(n)}}, \pi^{(n)}\right) .
\end{equation}
For more general products with $m<n$ we use the notation $Z^{(m,n)} \doteqdot \RL{m}   \RL{m+1}  \, \dots \, 
  \RL{n-1}$.
By our choice of the norm $|\underline{\lambda}|= \sum_{i}|\lambda_i|$ on vectors and $\norm{A} = \sum_{i,j}|A_{ij}| $ on matrices, from (\ref{lengthsrelation}), 
\be\label{normproduct}
| \underline{\lambda}^{(m)} | = \lambda^{(m)}    \leq \norm{ Z^{(m,n)} } \lambda^{(n)} .  
\ee
Moreover, if $Z^{(n,m)}= A_1 \cdots A_N$ where each of the matrices $A_i$ has strictly positive entries, then
\be \label{positvegrowth}
{\lambda}^{(m)} \geq d^{N}  \lambda^{(n)} .  
\ee


\label{natextsec}
The \emph{natural extension} $\hat{\Z}$ of the map $\Z$ is an invertible map defined on a domain $\hat{X}$
such that there exists a projection $p : \hat{X} \rightarrow X$ for which $p \hat{\Z} = \Z p $ (see \cite{Yo:con, Vi:IET} for the explicit definition of $\hat{X}$, which admits a geometric interpretation in terms of the space of zippered rectangles). 
 The natural extension $\hat{\Z}$ preserves a natural invariant measure ${\mu}_{\hat{\Z}}$, which gives $\mu_{\Z}$ as pull back by $p$. 
The cocycle $Z^{-1}$ can be extended to a cocycle over $(\hat{X},  {\mu}_{\hat{\Z}},\hat{\Z})$ by defining the extended cocycle, for which we will use the same notation $Z^{-1}$, to be  constant on the fibers of  $p$.

\paragraph{Towers and induced partitions}
The action of the initial interval exchange $T$ can be seen in terms of Rohlin towers over $T^{(n)}:=\Z^n(T)$ 
 as follows. Let $\underline{h}^{(n)} \in \mathbb{N}^d$ be the vector such that $h^{(n)}_j$
  gives the return time of any $x\in I^{(n)}_j$ to $I^{(n)}$.
Define the sets 
\bes
Z^{(n)}_j \doteqdot \bigcup _{l=0}^{h^{(n)}_j-1} T^l I^{(n)}_j.
\ees
Each $Z^{(n)}_j$ can be visualized as a tower over $I^{(n)}_j\subset I^{(n)}$, of height  $h^{(n)}_j$, 
whose floors are $T^l I^{(n)}_j$.  
Under the action of $T$ every floor but the top one, i.e.~if $0\leq l <h^{(n)}_j\!-\!1$, moves one step up, while the image by $T$ of the last one (corresponding to $l=h^{(n)}_j\!-\!1$) is $ T^{(n)} I^{(n)}_j $.

Let us denote by $\phi^{(n)}$ be the partition of $I^{(0)}$ into floors of step $n$, i.e.~intervals of the form $T^{l} I^{(n)}_j$. We say that  $F \in \phi^{(n)}$ is of \emph{type} $j$, where $1\leq j\leq d$, if it is a floor  of $Z^{(n)}_{j}$. 
The following well-known fact is proved for example in \cite{Yo:con, Vi:IET}. 
\begin{rem}\label{minimalrem}
If $T$ satisfies the Keane's condition, the partitions $\phi^{(n)}$ converge as $n$ tends to infinity to the trivial partitions into points.
\end{rem}
\noindent
We recall also that the entry $Z^{(n)}_{ij}$ of the matrix $Z^{(n)}$ equals to the number of visits of the orbit of any point $x\in I^{(n)}_j$ to the interval $I^{(0)}_i$ of the original partition before its first return to $I^{(n)}$.
Moreover, if $\underline{h}^{(0)}$ is the column vector with all entries equal to $1$,  the height vectors $\underline{h}^{(n)}$ can be obtained by applying the dual cocycle, i.e. 
\be\label{heightsrelation}
\underline{h}^{(n)} = (Z^T)^{(n)} \underline{h}^{(0)}.
\ee

\paragraph{Balanced return times.}\label{bndtypesec} 
Consider an orbit $\{\Z^n (T) \}_{n\in \mathbb{N}}$ of a $T$ satisfying Keane's condition. 
Let us say that a sequence $\{n_l\}_{l\in \mathbb{N}}$ is a sequence of \emph{balanced times} for $T$  if there exists  $\nu>1$
 such that the following hold for all $l \in \mathbb{N}$:
\be
\label{balance} 
 \frac{1}{\nu} \leq \frac{\lambda_i^{(n_l)}}{\lambda_j^{(n_l)}} \leq \nu, \qquad 
\frac{1}{\nu} \leq \frac{h_i^{(n_l)}}{h_j^{(n_l)}} \leq \nu, \quad  \forall \, 1 \leq i, \, j \leq d. \quad
\ee
If $n$ is such that the tower rapresentation over $\Z^n (T)$ satisfies (\ref{balance}), we call $n$ a \emph{balanced return time}.    
Lengths and heights of the induction towers
are approximately of the same size if $n$ is a  balanced return time or, more precisely:
\be \label{balancedcomparisons} 
 \frac{1}{d\nu }{\lambda^{(n)} } \leq  \lambda^{(n)}_j \leq \lambda^{(n)}  ; \qquad  
\frac{1}{\nu\lambda^{(n)} }  \leq  h^{(n)}_j \leq \frac{\nu}{ \lambda^{(n)} } , \qquad \forall \, j=0,\dots, d. %
\ee

\paragraph{Hilbert metric and projective contractions.} \label{Hilbertsec}
Consider on the simplex $\Delta_{d-1} \subset \mathbb{R}_+^d$ the \emph{Hilbert distance} $d_H$, defined as follows.
\bes
d_H(\underline{\lambda}, \underline{\lambda}') \doteqdot \log \left(  \frac{\max_{i=1,\dots , d} \frac{\lambda_i}{\lambda'_i}}{\min_{i=1 , \dots ,d}\frac{\lambda_i}{\lambda'_i}}\right).
\ees


Let us write $A\geq 0$ if $A$ has non negative entries and $A>0$ is $A$ has strictly positive entries.
Recall that to each $A\in SL(d,\mathbb{Z})$, $A\geq 0$, one can associate a projective transformation $\widetilde{A}: \Delta_{d-1} \rightarrow \Delta_{d-1}$ given by
\bes
\widetilde{A} \underline{\lambda} = \frac{A \underline{\lambda} }{|A \underline{\lambda} |}.
\ees
When $A \geq 0$, $d_H(\widetilde{A} \underline{\lambda}, \widetilde{A} \underline{\lambda}' ) \leq d_H(\underline{\lambda}, \underline{\lambda}')$. Furthermore, if $A>0$, then we get a contraction. More precisely, if $A>0$, since the closure $\widetilde{A}  \left( {\Delta}_{d-1} \right)$ is contained in $\Delta_{d-1}$, we have
\be \label{contraction}
 d_H(\widetilde{A} \underline{\lambda}, \widetilde{A} \underline{\lambda}' ) \leq (1-e^{-D(A)}) d_H(\underline{\lambda}, \underline{\lambda}'), \qquad \mathrm{where} \quad 
 D(A) \doteqdot \sup_{\underline{\lambda}, \underline{\lambda}' \in \Delta_{d-1}}d_H( \widetilde{A} \underline{\lambda}, \widetilde{A} \underline{\lambda}' ) < \infty. 
\ee

\section{Rigidity sets and Kochergin criterion}
\subsection{A  condition for absence of mixing.}\label{criteriumsec}
\paragraph{Rigidity sets}
Interval exchange transformations present some type of rigidity, which was used by Katok in \cite{Ka:int} to show that they are never mixing. Let us formalize it in the following definition.
\begin{defn}[Rigidity sets and times]\label{rigiditysetsdef} The sequence $\{E_k\}_{k\in \mathbb{N}}$ of measurable subsets $E_k \subset I$ form a sequence of \emph{rigidity sets} if there exists a corresponding increasing sequence of \emph{rigidity times} $\{ r_k\}_{k\in \mathbb{N}}$, $r_k\in \mathbb{N_+}$, and a sequence of  finite partitions $\{\xi_{k}\}_{k\in \mathbb{N}}$ 
converging to the trivial partition into points
 and a constant $\alpha>0$ such that
\begin{itemize}
\item[(i)] $Leb(E_k)\geq \alpha$ for all $k \in \mathbb{N}$;
 \item[(ii)] for any $F \in \xi_{k}$, $T^{r_k} (F\cap E_k )\subset F$.
\end{itemize}
\end{defn}
\noindent Condition $(ii)$ is a way to express that $T^{r_k}$ is close to identity on $E_k$.

In order to show absence of mixing for a special flow whose base presents this type of rigidity, it is enough to verify the following criterion, which was first used and proved  by Kochergin  in \cite{Ko:non}.
\begin{lemma}[Absence of mixing criterion] \label{absencemixingcriterion}
If there exist 
a sequence $\{ E_k  \}_{k\in \mathbb{N}}$ of rigidity sets $E_k \subset I$ with corresponding rigidity times $\{r_k\}_{k\in \mathbb{N}}$ 
 and a constant $ M> 0$ such that
\begin{itemize}
\item[(iii)] for all $k\in \mathbb{N}$, for all $y_1,y_2 \in E_k$, $|\BS{f}{r_k}(y_1)- \BS{f}{r_k}(y_2)| < M$,
\end{itemize}
then the special flow $\{\varphi_t\}_{t\in \mathbb{R}}$ is not mixing. 
\end{lemma}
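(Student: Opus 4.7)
My plan is to establish partial rigidity of the flow $\{\varphi_t\}$ along the sequence $\{t_k\}_{k \in \mathbb{N}}$ and derive non-mixing from it. Concretely, I will exhibit a constant $c > 0$ such that
\[
\liminf_{k \to \infty} \mu(\varphi_{t_k} A \cap A) \ge c \, \mu(A)
\]
for every measurable $A \subset X_f$. Non-mixing then follows immediately: any $A$ with $0 < \mu(A) < c$ would, under mixing, force $\mu(\varphi_{t_k} A \cap A) \to \mu(A)^2 < c \mu(A)$, contradicting the partial rigidity bound along the subsequence $\{t_k\}$.

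First I choose $t_k := \sup_{x \in E_k} \BS{f}{r_k}(x)$; by condition (iii), the deviation $\delta_k(x) := t_k - \BS{f}{r_k}(x)$ lies in $[0, M]$ for a.e.\ $x \in E_k$. A direct computation from (\ref{flowdef}) then shows that, for $(x, y) \in X_f$ with $x \in E_k$ and $y + M < f(T^{r_k}(x))$,
\[
\varphi_{t_k}(x, y) = \bigl(T^{r_k}(x),\; y + \delta_k(x)\bigr).
\]
So on this ``good'' portion of $X_f$, $\varphi_{t_k}$ acts as a near-identity map: by the rigidity property (ii), $T^{r_k}(x)$ remains in the same $\xi_k$-element as $x$, while the fiber coordinate is displaced by at most $M$.

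To obtain a lower bound on $\mu(\varphi_{t_k} A \cap A)$ for general $A$, I would approximate $A$ in $L^1(\mu)$ by finite unions of rectangles $F \times [a, b]$ with $F \in \xi_k$, $b - a > M$, and $F$ chosen in the tall-roof region $\{x : f(x) > b + M\}$ up to small-measure error. Such an approximation is available because $\xi_k$ converges to the partition into points (Remark \ref{minimalrem}) and $f$ is $L^1$-integrable and positive. For a single approximating rectangle $R = F \times [a, b]$, the flow computation combined with (ii) gives
\[
\mu\bigl(R \cap \varphi_{t_k}^{-1}(R)\bigr) \;\ge\; (b - a - M) \cdot \Leb{F \cap E_k \cap T^{-r_k}\{f > b + M\}}.
\]
Summing these estimates, using $\Leb{E_k} \ge \alpha$ from (i) and the measure-preservation of $T^{r_k}$, yields the partial rigidity bound with a uniform $c > 0$.

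The main obstacle is calibrating the fiber shift of size $M$ against the geometry of $X_f$ near singularities of $f$: only those $x \in E_k$ for which $T^{r_k}(x)$ lands in $\{f > b + M\}$ contribute to the lower bound. Since $f > 0$ almost everywhere, $\Leb{\{f > \eta\}} \to 1$ as $\eta \to 0^+$, so an appropriate choice of $\eta$ together with inclusion--exclusion against (i) guarantees a positive fraction of $R$ which contributes. A multi-scale approximation, using rectangles whose heights are adapted to the local size of $f$, should then deliver the uniform constant $c$ without any further assumption on $M$.
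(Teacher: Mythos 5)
The opening moves are sound: your choice of $t_k := \sup_{x\in E_k}\BS{f}{r_k}(x)$, the computation $\varphi_{t_k}(x,y) = \bigl(T^{r_k}(x),\,y+\delta_k(x)\bigr)$ for $x\in E_k$ with $y+M<f(T^{r_k}x)$, and the single-rectangle estimate $\mu(R\cap\varphi_{t_k}^{-1}R)\geq (b-a-M)\Leb{F\cap E_k}$ for $R=F\times[a,b]$ with $F\in\xi_k$, $F\subset\{f>b+M\}$, are all correct. (Note the set $T^{-r_k}\{f>b+M\}$ in your estimate already contains $F\cap E_k$ by condition (ii), so the intersection is superfluous.)

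The gap is in the step ``Summing these estimates \dots yields the partial rigidity bound with a uniform $c>0$.'' Summing the rectangle estimates gives, after passing to the induced partition on $X_f$, essentially the lower bound
\[
\mu(\varphi_{t_k}A\cap A)\;\gtrsim\;\mu(A\cap\hat E_k)-o(1),
\]
where $\hat E_k\subset X_f$ is the ``good region'' above $E_k$. This is \emph{not} $\geq c\,\mu(A)$ uniformly in $A$: the quantity $\mu(A\cap\hat E_k)$ can be as small as $0$ for sets $A$ essentially disjoint from $\hat E_k$, and condition (i) only forces $\Leb{E_k}\geq\alpha$, which is compatible with $\alpha<1$ (so $\hat E_k$ has a complement of measure bounded away from $0$). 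The claim that $\{\varphi_{t_k}\}$ is $c$-partially rigid along $\{t_k\}$ for \emph{every} $A$ is therefore too strong to extract from the hypotheses; it would only become plausible if $\Leb{E_k}\to1$, which the lemma does not assume.

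The repair, which is the standard Katok--Kochergin argument, is to prove the lower bound only for a \emph{specific} set $A$ produced by weak-$*$ compactness. Along a subsequence, $\mathbb{1}_{\hat E_k}\rightharpoonup h$ in $L^\infty$ with $\int h\,d\mu\geq\beta>0$ for some $\beta$ depending on $\alpha$; hence $H:=\{h\geq\beta/2\}$ has $\mu(H)\geq\beta/2$. For any $A$ that, up to small error, is a union of elements of the refined partition $\hat\xi_k$ of $X_f$, your rectangle estimate gives $\liminf_k\mu(\varphi_{t_k}A\cap A)\geq\int_A h\,d\mu$. Choosing $A\subset H$ with $0<\mu(A)<\beta/2$ (possible since $\mu$ is non-atomic) yields $\int_A h\geq(\beta/2)\mu(A)>\mu(A)^2$, contradicting mixing. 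Once you restrict attention to such a tailor-made $A$, the multi-scale rectangle approximation you sketch, with heights adapted to $f$, does carry the argument; the ``uniform in $A$'' claim is the piece that must be dropped.
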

Condition $(iii)$ is described sometimes saying that Birkhoff sums $\BS{f}{r_k}$ ``do not stretch''. Stretching of Birkhoff sums is the main mechanism which produces mixing in special flows over rotations or over interval exchange transformations when the roof function has logarithmic asymmetric singularities (see e.g. \cite{SK:mix, Ul:mix}). Lemma \ref{absencemixingcriterion} show that, when there is rigidity in the base, stretching of the Birkhoff sums is also a necessary condition to produce mixing.

We use Lemma \ref{absencemixingcriterion} to prove Theorem \ref{absencethm}. In $\S \ref{rigiditysec}$ we describe  the construction of a class of sequences of rigidity sets  $E_k$ and times $r_k$ for typical IETs, which are used in the proof of Theorem \ref{absencethm}.  The sets that we construct are analogous to the type of sets used by Katok in \cite{Ka:int} to show that IETs are never mixing, but are constructed with the help of Rohlin towers for Rauzy-Veech induction.  
A variation of this construction is used by the author also in \cite{Ul:wm}, for the proof of weak mixing for this class of flows. 

\subsection{Construction of rigidity sets.}\label{rigiditysec}
Assume $T$ satisfies Keane's condition. Let $\{n_k\}_{k\in \mathbb{N}}$ be a sequence of balanced times for $T$. Consider the corresponding towers $Z^{(n_k)}_j$ for $j=1,\dots, d$. 
By pigeon hole principle, since $\sum_j h^{(n_k)}_{j} \lambda^{(n_k)}_{j}=1$, we can choose $j_0$ such that
\be \label{bigtower}   h^{(n_k)}_{j_0} \lambda^{(n_k)}_{j_0} \geq \frac{1}{d} . \ee 
The map $T'$ obtained inducing on $T$ on $I^{(n_k)}_{j_0}$ is an IET of at most $d+2$ intervals (see for example \cite{CFS:erg}), which we denote $(I^{(n_k)}_{j_0})_ l$, where $0\leq l\leq d+2$. Let $h^{(n_k)}_{j_0, l}$ be the first return time of  $(I^{(n_k)}_{j_0})_ l$ to  $I^{(n_k)}_{j_0}$ under $T$. Choose $l_0$ so that 
\be \label{biginterval} \Leb{(I^{(n_k)}_{j_0})_ {l_0}}\geq \frac{1}{d+2}\Leb{I^{(n_k)}_{j_0}} . \ee
Let $J_k \subset  (I^{(n_k)}_{j_0 } )_{ l_0} $ be any subinterval such that  $\Leb{J_k} \geq \beta \,  \Leb{ (I^{(n_k)}_{j_0} )_{ l_0} } $ for some $0\leq \beta \leq 1$. Define
\be\label{Ck}
 E_k :=  \bigcup_{i=0} ^{h^{(n_k)}_{j_0 }-1} T^i J_k 
; \qquad r_k := h^{(n_k)}_{j_0, l_0},\ee
i.e., $E_k$ 
 is the part of the tower $Z^{(n_k)}_{j_0}$ which \emph{lie above} $J_k$. 
Let $\xi_{k}= \phi^{(n_k)}$ be the sequence of partitions into floors corresponding to the considered balanced steps.  

\begin{lemma}\label{rigidityiandii}
Any sequence of $\xi_{k}$, $ r_k$ and $E_k$ defined as above satisfy the assumptions $(i)$ and $(ii)$ of Definition \ref{rigiditysetsdef}. 
\end{lemma}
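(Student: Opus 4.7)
The plan is to verify conditions (i) and (ii) directly from the geometric description of the sets $E_k$, together with the balance assumption on $\{n_k\}$ and the choices of $j_0$, $l_0$, $J_k$. The partitions $\xi_k = \phi^{(n_k)}$ are finite and converge to the trivial partition into points by Remark \ref{minimalrem}, so what remains is $(i)$ and $(ii)$.

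For condition $(i)$, I compute $\Leb{E_k}$ directly. Since $J_k \subset (I^{(n_k)}_{j_0})_{l_0} \subset I^{(n_k)}_{j_0}$, the sets $T^i J_k$ for $0 \leq i < h^{(n_k)}_{j_0}$ are disjoint (they live on distinct floors of the tower $Z^{(n_k)}_{j_0}$, and $T$ restricted to these floors is an isometry), hence
\[
\Leb{E_k} \;=\; h^{(n_k)}_{j_0}\,\Leb{J_k} \;\geq\; \beta\, h^{(n_k)}_{j_0}\,\Leb{(I^{(n_k)}_{j_0})_{l_0}} \;\geq\; \frac{\beta}{d+2}\,h^{(n_k)}_{j_0}\,\lambda^{(n_k)}_{j_0} \;\geq\; \frac{\beta}{d(d+2)},
\]
using (\ref{biginterval}) and (\ref{bigtower}). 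So $(i)$ holds with $\alpha = \beta/(d(d+2))$.

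For condition $(ii)$, the key observation is that $E_k$ lies entirely in the tower over $I^{(n_k)}_{j_0}$, so $F \cap E_k$ is empty unless $F$ is one of the $h^{(n_k)}_{j_0}$ floors $T^l I^{(n_k)}_{j_0}$ with $0 \leq l < h^{(n_k)}_{j_0}$; in the empty case $(ii)$ is trivial. Otherwise $F \cap E_k = T^l J_k$. Since $r_k = h^{(n_k)}_{j_0, l_0}$ is by definition the first return time of $(I^{(n_k)}_{j_0})_{l_0}$ to $I^{(n_k)}_{j_0}$ under $T$, and $J_k \subset (I^{(n_k)}_{j_0})_{l_0}$, we have $T^{r_k} J_k \subset I^{(n_k)}_{j_0}$. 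Therefore
\[
T^{r_k}(F \cap E_k) \;=\; T^{r_k}(T^l J_k) \;=\; T^l(T^{r_k} J_k) \;\subset\; T^l I^{(n_k)}_{j_0} \;=\; F,
\]
where one uses that $T^l$ acts as a rigid translation on $I^{(n_k)}_{j_0}$ for $0 \leq l < h^{(n_k)}_{j_0}$, so the compositions $T^{r_k} \circ T^l$ and $T^l \circ T^{r_k}$ agree on $J_k$.

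There is no serious obstacle here: the statement is essentially a bookkeeping verification that the ``column over $J_k$'' inside the Rauzy--Veech tower is mapped back into itself by $T^{r_k}$, and that its total measure is bounded below uniformly in $k$ thanks to the balance condition and the pigeon-hole choices made in (\ref{bigtower}) and (\ref{biginterval}). The balance hypothesis (\ref{balance}) is not strictly used in $(i)$ and $(ii)$ individually beyond ensuring that $j_0$ with (\ref{bigtower}) exists along the chosen subsequence; its real role appears later when one also wants condition $(iii)$ of Lemma \ref{absencemixingcriterion}.
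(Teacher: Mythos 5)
Your verification is correct, and it is the direct, expected argument. The paper itself does not spell out this proof—it refers the reader to the companion paper \cite{Ul:wm}—but your computation of $\Leb{E_k}=h^{(n_k)}_{j_0}\Leb{J_k}\geq\beta/(d(d+2))$ via (\ref{bigtower}) and (\ref{biginterval}), and your observation that $T^{r_k}(T^l J_k)=T^l(T^{r_k}J_k)\subset T^l I^{(n_k)}_{j_0}$ because $T^{r_k}(I^{(n_k)}_{j_0})_{l_0}\subset I^{(n_k)}_{j_0}$ by definition of the induced return time, are exactly the facts that need checking, together with Remark~\ref{minimalrem} for the convergence of $\xi_k=\phi^{(n_k)}$.

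One small remark on presentation: the phrase ``one uses that $T^l$ acts as a rigid translation \dots\ so the compositions $T^{r_k}\circ T^l$ and $T^l\circ T^{r_k}$ agree on $J_k$'' points at the wrong justification. Iterates of a single map always commute as maps, so $T^{r_k}\circ T^l=T^{r_k+l}=T^l\circ T^{r_k}$ requires no argument. Where the ``rigid translation'' fact is actually used is in identifying $T^l I^{(n_k)}_{j_0}$ as a single element $F$ of the partition $\phi^{(n_k)}$ (i.e.\ $T^l$ does not break $I^{(n_k)}_{j_0}$ across a discontinuity for $0\le l<h^{(n_k)}_{j_0}$), and in establishing $\Leb{T^iJ_k}=\Leb{J_k}$ so that the measure of $E_k$ is $h^{(n_k)}_{j_0}\Leb{J_k}$. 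With that attribution corrected, the proof is clean and complete, and you are also right that the balance hypothesis is not needed for (i)--(ii) per se; its role is downstream, in condition (iii).
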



\begin{rem}\label{disjoint}
For any $0\leq j < {h^{(n_k)}_{j_0 }}$,   all $T^i (T^j(I^{(n_k)}_{j_0 } )_{ l_0})$  with $0\leq i < r_k$ are disjoint intervals, which are rigid translates of $(I^{(n_k)}_{j_0 } )_{ l_0}$. The same is true for  $T^i J_{k}$, $0\leq i < r_k$.
\end{rem}
\noindent 
The proof of Lemma \ref{rigidityiandii} and of Remark \ref{disjoint} can be found in \cite{Ul:wm}. 

\section{Upper bounds on Bikhoff sums of derivatives.} \label{absencesec} 
The key ingredient  to show condition $(iii)$ of the absence of mixing criterion (Lemma \ref{absencemixingcriterion}) 
 are upper bounds on the Birkhoff sums  $|\BS{f'}{r_k}| $ on some rigidity set $E_k$ where $r_k$ is the corresponding rigidity times (see definitions in $\S$\ref{criteriumsec}). 
Let us first consider Birkhoff sums of the form $\BS{f'}{r_k} (z_0) $ where $z_0 \in I^{(n_k)}_j$ and $r_k= h^{(n_k)}_j$ is exactly the return time.  We call this type of sums  \emph{Birkhoff sums along a tower}, since the orbit segment $\{ T_i z_0 \}_{i=0}^{r_k-1}$ has exactly one point in each floor of the tower $Z^{(n_k)}_j$.

Let us denote by $x_i^{min}$, for  $i=0, \dots, s_1-1$, the minimum distance from the singularity  $\overline{z}^{+}_i$  of the orbit points to the right of $\overline{z}^{+}_i$  and  by $y_i^{min}$,  for $i=0, \dots, s_2-1$,  the minimum distance from the $\overline{z}^{-}_i$  of the points to the left of  $\overline{z}^{-}_i$.  
  In formulas, denoting by $(x)^{pos}$ the positive part of $x$ which is defined by $(x)^{pos}=x$ if $x\geq 0$ and $(x)^{pos}=0$ if $x<0$, these minimum distances are given by
\begin{eqnarray}
x_i^{min} &: =& \min \{ (T^j z_0 - \overline{z}_i )^{pos} , \quad 0\leq j < r_k \}, \qquad  
\quad i=0, \dots, s_1-1;  \label{minx} \\ 
y_i^{min} &: = &\min \{  (\overline{z}_i  - T^j z_0 )^{pos},  \qquad 0\leq j < r_k \}, \quad i=0, \dots, s_2-1.
\label{miny}
\end{eqnarray}
\begin{prop}\label{boundSf'growthprop}
For a.e.~IET $T$ 
 there exist a constant $M$ and sequence of balanced induction  times $\{c_l\}_{l\in \mathbb{N}}$ such that, if $z_0 \in I^{(c_l)}_j$ and $r_l= h^{(c_l)}_j$, 
\be \label{boundSf'growth}
\left| \BS{f'}{r_l} (z_0) \right| \leq  M r_l + \sum_{i=0}^{s_1-1}  \frac{C_i^+}{x_i^{min}} +   \sum_{i=0}^{s_2-1} \frac{C_i^-}{y_i^{min}}   .
\ee
\end{prop}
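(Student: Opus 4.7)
My plan is to decompose $f'$ via the identity (\ref{logsing}) as $f' = \sum_{i} C_i^- v_i - \sum_{i} C_i^+ u_i + g$ and bound each piece of the Birkhoff sum separately, using the triangle inequality. Since $g$ is of bounded variation on $[0,1]$ and therefore bounded, we immediately get $|\BS{g}{r_l}(z_0)| \le \|g\|_\infty r_l$, which is absorbed into the $M r_l$ term. The remaining task is to control each singular Birkhoff sum $\BS{u_i}{r_l}(z_0) = \sum_{j=0}^{r_l-1} 1/\{T^j z_0 - \overline{z}_i^+\}$; the estimates for $\BS{v_i}{r_l}(z_0)$ in terms of $y_i^{min}$ are entirely analogous.

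The key geometric input is the Rauzy-Veech tower structure. Because $z_0 \in I^{(c_l)}_j$ and $r_l = h^{(c_l)}_j$, the orbit segment $\{T^j z_0 : 0 \le j < r_l\}$ visits each of the $r_l$ floors of the tower $Z^{(c_l)}_j$ exactly once. The floors are pairwise disjoint intervals of common length $\lambda^{(c_l)}_j$, and each orbit point sits at the same relative position within its floor (namely, at offset $z_0$ from the left endpoint of $I^{(c_l)}_j$). Consequently any two orbit points are separated by at least $\lambda^{(c_l)}_j$ on $[0,1]$. Sorting the positive distances from $\overline{z}_i^+$ of those orbit points lying to its right increasingly as $0 < d_1 < d_2 < \cdots < d_N$, we have $d_1 = x_i^{min}$ and $d_m \ge d_1 + (m-1)\lambda^{(c_l)}_j$.

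Isolating the contribution of the closest point yields
\bes
|\BS{u_i}{r_l}(z_0)| \;\le\; \frac{1}{x_i^{min}} + \sum_{m=2}^{N} \frac{1}{(m-1)\lambda^{(c_l)}_j},
\ees
which produces the $C_i^+/x_i^{min}$ summand in (\ref{boundSf'growth}) after multiplying by $C_i^+$ and summing over $i$. The main obstacle is to show that the residual tail is bounded by a uniform constant times $r_l$: a naive integral comparison yields only $O((\log r_l)/\lambda^{(c_l)}_j)=O(r_l \log r_l)$ at balanced times, which is too weak. To obtain the clean $O(r_l)$ bound, I would restrict to a carefully chosen subsequence $\{c_l\}$ of balanced times via the cocycle growth estimates in Lemmas \ref{expgrowthlemma} and \ref{subexpgrowthlemma}, so that additional quantitative control on the distribution of orbit points near each singularity (sharpening the lower bound on $d_m$ beyond the generic spacing by $\lambda^{(c_l)}_j$) absorbs the spurious logarithmic factor. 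This tail estimate is the technical heart of the proof.
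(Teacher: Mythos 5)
Your approach has a fundamental gap that cannot be repaired by a cleverer choice of subsequence: the triangle-inequality decomposition, which bounds each singular piece $\BS{u_i}{r_l}$ and $\BS{v_i}{r_l}$ separately, is structurally unable to produce the linear bound in (\ref{boundSf'growth}). The individual Birkhoff sums of a single one-sided singularity \emph{genuinely} grow like $r_l \log r_l$ along any sequence of balanced times (the paper remarks exactly this, citing the asymmetric case treated in \cite{Ul:mix} where this logarithmic growth is the source of mixing). Your own computation correctly arrives at $|\BS{u_i}{r_l}(z_0)| \lesssim 1/x_i^{min} + \sum_{m=1}^{N-1} \tfrac{1}{m\,\lambda^{(c_l)}_j}$, and the harmonic sum is of order $\log r_l / \lambda^{(c_l)}_j \sim r_l \log r_l$; this lower bound is essentially sharp, because the orbit does distribute roughly like an arithmetic progression of gap $\lambda^{(c_l)}_j$ near the singularity. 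No cocycle acceleration can tighten the spacing beyond that, so the logarithm is not ``spurious'' --- it is the true size of each summand.

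What makes the proposition hold is a \emph{cancellation} between the right-singular and left-singular contributions, enabled by the symmetry condition $\sum_i C_i^+ = \sum_i C_i^-$, and this cancellation is destroyed the moment you apply the triangle inequality. The paper's proof instead pairs the two sides from the start: in the simplest case $s_1=s_2=1$, $C_0^+=C_0^-=C$, it writes
\bes
\BS{f'}{r_l}(z_0) = C\sum_{j=0}^{r_l-1}\left(\frac{1}{y_0(j)} - \frac{1}{x_0(j)}\right) = C\sum_{j=0}^{r_l-1}\frac{x_0(j)-y_0(j)}{x_0(j)\,y_0(j)},
\ees
and uses the deviation estimates (Corollary \ref{deviationsxjyj}, built on Lemmas \ref{powercontrolN} and \ref{deviationst}) to show that the leading order (linear in $j$) of $x_0(j)$ and $y_0(j)$ is \emph{identical} and cancels in the numerator, leaving only a small-power error term $O(j^\gamma)$. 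It is then the quantity $j^\gamma / j^2 = j^{\gamma-2}$, rather than $1/j$, that is summed over $j$, and the acceleration via Lemma \ref{subexpgrowthlemma} is used to control the $\Theta$-factors in that error, not to beat the harmonic series. In the general case the same principal-value cancellation occurs after putting the $s_1+s_2$ fractions over a common denominator and using the symmetry hypothesis to kill the top-degree coefficient. So the decomposition step at the very start of your argument is where it goes wrong; you would need to replace it by a cancellation argument of this type before the rest of the machinery you invoke could help.
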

\noindent The proof of Proposition \ref{boundSf'growthprop} is given in \S\ref{cancelletionssec}, using the Lemmas proved in  \S\ref{alldeviationssec}. The estimate of Proposition \ref{boundSf'growthprop} for Birkhoff sums along towers is then used in 
 \S \ref{decompositionsec} to give bounds on more general Birkhoff sums. 

Let us remark that the linear growth in (\ref{boundSf'growth}) is essentially due to a principal value phenomenon of cancellations between symmetric sides of the singularities, which is peculiar of the symmetric case. A similar principal value phenomenon was used, in the case of rotations, in \cite{SU:lim}. In presence of an asymmetric singularity, as shown in \cite{Ul:mix}, $ \BS{f'}{r_l} $ grows as $r_l \log r_l$ on a set of measure tending to $1$ as $l$ tends to infinity. 

\subsection{Deviations estimates.}\label{alldeviationssec}
In order to estimate deviations of ergodic averages, it is standard to first consider deviations for 
the number of elements  of $\phi^{(n)}$ of type $j$ inside $I^{(m)}_{i}$, i.e. for the quantities
\bes
N^{(m,n)}_{ij} \doteqdot \# \{ \, h\, | \quad T^h I^{(n)}_{j} \subset I^{(m)}_{i}, \, 0\leq h < h^{(n)}_j \}.
\ees
In terms of the cocycle matrices $N^{(m,n)}_{ij} = Z^{(m,n)} _{ij} $ 
and $N^{(m,n)}_{ij}$ gives also the cardinality of elements of $\phi^{(n)}$ of type $j$ inside each element of $\phi^{(m)}$ of type $i$. 

Let us recall that in \cite{Zo:dev} Zorich proved an asymptotic result on deviations of ergodic averages for characteristic functions of intervals of $\phi^{(0)}$ (hence on the asymptotic growth of $N^{(m,n)}_{ij}$). 

\subsubsection{Balanced acceleration} \label{balancedaccelerationsec}
Let $Z$ be the Zorich cocycle over the natural extension $\hat{\Z}$ (see \S \ref{natextsec}).  
Let $\hat{K}$ be a compact subset of $\hat{X}$ and  denote by $A:=Z_{\hat{K}}$ be the induced cocycle of $Z$ on ${\hat{K}}$.
If $\hat{T}$ is recurrent to $\hat{K}$, denote by  $\{a_n \}_{n\in \mathbb{N}}$ the sequence of visits of $\hat{T}$ to ${\hat{K}}$.
  One can choose the compact set\footnote{Using the notation in \cite{AGY:exp}, we can for example choose $\hat{K}$ to be a subset of the Zorich cross-section contained in $\Delta_\gamma \times \Theta_{\gamma}$, where $\gamma$ is a path in the Rauzy class of $\pi$ starting and ending at $\pi$, chosen to be \emph{positive} and \emph{neat} (see \S 3.2.1 and \S 4.1.3 in \cite{AGY:exp} for the corresponding definitions).}
   $\hat{K}$ so that, considering the  acceleration corresponding to return times to $\hat{K}$, the following properties hold (the notation is the one introduced in \S \ref{Rauzysec} and more details can be found in \cite{Ul:erg} and \cite{AGY:exp}).
\begin{lemma}\label{balancedaccelerationlemma}
There exists $\overline{D}>0$ and $\nu>1$ depending only on $\hat{K}$ 
 such that 
\begin{itemize}
\item[(i)] $A_n =A(\Z^{a_n} T ) > 0$ for each $n \in \mathbb{N}$; 
\item[(ii)] $D(A_n) \doteqdot \sup_{\underline{\lambda}, \underline{\lambda}' \in \Delta_{d-1}} d_H( \widetilde{A}_n \underline{\lambda}, \widetilde{A}_n \underline{\lambda}' ) \leq \overline{D}$;
\item[(iii)] the return times $\{a_n \}_{n\in \mathbb{N}}$ to $\hat{K}$  
 are $\nu$-balanced times. 
\end{itemize} 
\end{lemma}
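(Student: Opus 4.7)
The plan is to construct the compact set $\hat K$ via a standard device from Rauzy--Veech theory: I would fix a \emph{positive, neat} path $\gamma$ in the Rauzy class of $\pi$ starting and ending at $\pi$, whose existence in every Rauzy class is guaranteed by standard results (see e.g.~\cite{Vi:IET, AGY:exp}). Positivity means the associated Rauzy matrix $A(\gamma)\in SL(d,\mathbb{Z})$ has all entries strictly positive. I would then take $\hat K$ to be a compact subset of the Zorich cross-section with positive $\mu_{\hat{\Z}}$-measure and closure contained in the interior of $\Delta_\gamma\times\Theta_\gamma\times\{\pi\}$, where $\Delta_\gamma$ and $\Theta_\gamma$ denote the simplex and dual cone associated with $\gamma$ in the zippered-rectangles representation of $\hat X$.

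For \textbf{(i)} and \textbf{(ii)}, the key observation is that every excursion from $\hat K$ back to itself under $\hat{\Z}$ corresponds to a loop at $\pi$ in the Rauzy diagram, and the geometry of $\hat K$ forces this loop to contain $\gamma$ as a subpath. The induced lengths matrix then factors as $A_n = A(\gamma)\cdot M_n$ for some non-negative integer matrix $M_n$ (a product of Rauzy matrices, hence with no zero column). Since $A(\gamma)>0$, the product $A_n$ is strictly positive, giving (i). Moreover, the projective image $\widetilde{A}_n(\Delta_{d-1})\subset \widetilde{A(\gamma)}(\Delta_{d-1})$, whose closure is compactly contained in the open simplex. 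Setting $\overline D\doteqdot \sup_{\underline\lambda,\underline\lambda'\in\Delta_{d-1}} d_H\!\left(\widetilde{A(\gamma)}\underline\lambda,\widetilde{A(\gamma)}\underline\lambda'\right)<\infty$, one obtains the uniform bound $D(A_n)\leq \overline D$ for all $n$, proving (ii).

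For \textbf{(iii)}, at every return time $a_n$ the point $\hat{\Z}^{a_n}\hat T$ lies in $\hat K$. Since $\hat K$ is compactly contained in the open stratum where all length and $\tau$-coordinates are strictly positive, both ratios $\lambda^{(a_n)}_i/\lambda^{(a_n)}_j$ and $\tau^{(a_n)}_i/\tau^{(a_n)}_j$ are uniformly bounded. Invoking the standard zippered-rectangles identification, in which the height vector of the IET at step $a_n$ is recovered from $\tau^{(a_n)}$ through a fixed linear change of coordinates dual to the lengths cocycle (consistent with the transpose relation (\ref{heightsrelation})), compactness in the $\tau$-direction translates into uniform bounds on the ratios $h^{(a_n)}_i/h^{(a_n)}_j$. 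Taking $\nu$ larger than both length and height ratio bounds yields the balancedness conditions (\ref{balance}) at every return time.

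The most delicate point to check rigorously is the factorization $A_n=A(\gamma)\cdot M_n$ for every return excursion, together with the identification between the $\tau$-coordinate of the natural extension and the heights $h^{(a_n)}$ of the induced IET: both are standard within the Veech--Zorich--AGY framework but require careful bookkeeping of the conventions relating forward induction (which contracts lengths) and backward induction (which contracts $\tau$, dual to the growth of the heights cocycle).
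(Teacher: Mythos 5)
Your construction of $\hat{K}$ via a positive, neat loop $\gamma$, and the factorization $A_n = A(\gamma)\,M_n$ used to deduce (i) and (ii), are exactly what the paper's footnote gestures at: the paper gives no written proof and refers the reader to \cite{Ul:erg} and \cite{AGY:exp}, and your fleshing out of (i) and (ii) is correct.

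The height-balance half of (iii), however, contains a genuine conceptual slip. You assert that $\underline{h}^{(a_n)}$ ``is recovered from $\tau^{(a_n)}$ through a fixed linear change of coordinates.'' This confuses two distinct vectors. The $\tau$-coordinate does determine, by a fixed $\pi$-dependent linear map, the \emph{real} heights of the zippered rectangle at step $a_n$, but that is not the vector entering (\ref{balance}). The vector in (\ref{balance}) is the integer tower-height vector $\underline{h}^{(a_n)} = (Z^T)^{(a_n)}\underline{h}^{(0)}$ with $\underline{h}^{(0)}=(1,\dots,1)^T$ (relation (\ref{heightsrelation})); it depends on the entire forward Rauzy path from step $0$ to step $a_n$, not on $\tau^{(a_n)}$ alone, and the two vectors agree only projectively in the limit $n\to\infty$. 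So ``bounded $\tau$-ratios'' do not translate into bounded $h$-ratios by a change of coordinates, and, taken literally, this step is false.

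The correct mechanism, which still uses your choice of $\hat K$, goes through the \emph{past path} rather than a coordinate identification. Membership $\tau^{(a_n)}\in\Theta_\gamma$ forces the Rauzy path covering the last $|\gamma|$ induction steps before time $a_n$ to be $\gamma$, so $\underline{h}^{(a_n)} = A(\gamma)^T\,\underline{h}'$ where $\underline{h}'$ is the tower-height vector $|\gamma|$ steps earlier. Since $A(\gamma)^T>0$ and every entry of $\underline{h}'$ is $\geq 1$, the ratios $h^{(a_n)}_i/h^{(a_n)}_j$ are bounded in terms of the entries of $A(\gamma)$ alone; one handles the finitely many initial returns with $a_n < |\gamma|$ by enlarging $\nu$ or dropping them. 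With this substitution your proof closes and follows the same route the paper intends.
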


\subsubsection{Deviations estimates for partition intervals.}\label{deviationssec}
Using the balanced acceleration $A$ we can control quantitatively the convergence of $N^{(m,n)}_{ij}$ corresponding to visits $\{a_n\}_{n\in \mathbb{N}}$ to $\hat{K}$. 
\begin{lemma}\label{ergodicconvergenceN}
Let $
(N_A)^{(m,n)}_{ij} : = N^{(a_m,a_n)}_{ij}$. 
There exists $C_{\overline{D}}>0$ such that for each recurrent $\hat{T} \in \hat{K} $, for each pair  $a_m < a_n$ of return times, we have  
\be\label{deviation}
 (N_A)^{(m,n)}_{i j} = \delta^{(a_n)}_j  \, \frac{  \lambda^{(a_m)}_i }{\lambda^{(a_n)}_j} \left( 1+ \epsilon^{(a_m,a_n)}_{i j} \right),  \qquad \left| \epsilon^{(a_m,a_n)}_{i j} \right|\leq 
 C_{\overline{D}} {(1-e^{-\overline{D}})}^{n-m}  ,
\ee
for all $1\leq i,j \leq d$, where $\delta^{(a_n)}_j =  h^{(a_n)}_j  \lambda^{(a_n)}_j$ .
\end{lemma}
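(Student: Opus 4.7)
The plan is to use the Hilbert metric contraction statement (\ref{contraction}) together with Lemma \ref{balancedaccelerationlemma}, exploiting that the accelerated cocycle $A$ is made of positive matrices with uniformly bounded projective diameter $\overline{D}$. Throughout, write $B := (N_A)^{(m,n)} = A^{(m,n)} = A_m A_{m+1} \cdots A_{n-1}$, and recall that $\underline{\lambda}^{(a_m)} = B\,\underline{\lambda}^{(a_n)}$ (length cocycle relation (\ref{lengthsrelation})) and $h^{(a_n)}_j = \sum_i B_{ij}\, h^{(a_m)}_i$ (dual cocycle relation (\ref{heightsrelation})).

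The first step is to show that all columns $B e_j$ of $B$ are projectively close. Writing $B = A_m \cdots A_{n-2}(A_{n-1} e_j)$, the vector $A_{n-1}e_j$ is a column of the strictly positive matrix $A_{n-1}$, so its projectivization lies in $\widetilde{A_{n-1}}(\Delta_{d-1})$, a set of Hilbert diameter $\leq \overline{D}$ by Lemma \ref{balancedaccelerationlemma}(ii). Applying in succession the positive matrices $A_{n-2}, \ldots, A_m$, each of which contracts Hilbert distances by a factor $\leq (1-e^{-\overline{D}})$ (by (\ref{contraction})), we get
\bes
 d_H\bigl(\widetilde{B e_j}, \widetilde{B e_{j'}}\bigr) \leq \overline{D}\,(1-e^{-\overline{D}})^{n-m-1},\qquad\forall\, j,j'.
\ees
Since $\underline{\lambda}^{(a_m)} = \sum_j \lambda^{(a_n)}_j (B e_j)$ is a strictly positive linear combination of the columns, a standard convexity argument for the Hilbert metric places $\widetilde{\underline{\lambda}^{(a_m)}}$ in the same ball, giving the same bound with $\widetilde{B e_{j'}}$ replaced by $\widetilde{\underline{\lambda}^{(a_m)}}$.

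Next, translate from Hilbert distance to a multiplicative estimate on entries. If $d_H(u,v) \leq \delta$, then all ratios $u_i/v_i$ lie in an interval of multiplicative width $e^\delta$; hence there exists $c_j > 0$ with
\bes
 B_{ij} = c_j\, \lambda^{(a_m)}_i\, (1 + \eta_{ij}),\qquad |\eta_{ij}| \leq C'_{\overline{D}}\,(1-e^{-\overline{D}})^{n-m-1},
\ees
for some constant $C'_{\overline{D}}$ depending only on $\overline{D}$. To identify $c_j$, substitute into the height relation and use $\sum_i h^{(a_m)}_i \lambda^{(a_m)}_i = 1$:
\bes
 h^{(a_n)}_j = \sum_i B_{ij}\, h^{(a_m)}_i = c_j \sum_i \lambda^{(a_m)}_i h^{(a_m)}_i (1 + \eta_{ij}) = c_j\,(1 + O(\delta)).
\ees
Thus $c_j = h^{(a_n)}_j(1 + O(\delta))$, and inserting back gives
\bes
 B_{ij} = h^{(a_n)}_j\, \lambda^{(a_m)}_i\, (1 + \epsilon_{ij}^{(a_m,a_n)}) = \delta^{(a_n)}_j\, \frac{\lambda^{(a_m)}_i}{\lambda^{(a_n)}_j}\,(1 + \epsilon_{ij}^{(a_m,a_n)}),
\ees
with $|\epsilon_{ij}^{(a_m,a_n)}| \leq C_{\overline{D}}\,(1-e^{-\overline{D}})^{n-m}$ upon absorbing the factor $\overline{D}/(1-e^{-\overline{D}})$ into $C_{\overline{D}}$.

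The one place requiring a bit of care is the passage from the Hilbert-metric statement to the multiplicative expansion of entries, together with the identification of $c_j$: one needs to keep track of the error carefully so that the bound on $\epsilon_{ij}$ does not degrade. Beyond that, the proof is essentially an application of the contraction (\ref{contraction}) iterated $n-m-1$ times, combined with the dual relation (\ref{heightsrelation}) and the normalization $\sum_i h^{(a_m)}_i \lambda^{(a_m)}_i = 1$.
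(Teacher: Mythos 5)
Your proof is correct and uses essentially the same approach as the paper: iterate the Hilbert-metric contraction (\ref{contraction}) on the columns of $A^{(m,n)}$ to get the projective estimate with exponent $n-m-1$, then use the heights relation (\ref{heightsrelation}) and the normalization $\sum_i h^{(a_m)}_i\lambda^{(a_m)}_i=1$ to identify the proportionality constant as $h^{(a_n)}_j$. The paper phrases the intermediate step as two reciprocal ratio bounds summed over indices (equation (\ref{1stratio})) rather than your more direct identification of $c_j$, but this is a cosmetic reorganization of the same argument.
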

\noindent The leading term in (\ref{deviation}) is, as expected by ergodicity, proportional to the ratio of lengths of the intervals, with 
the constant $\delta^{(a_n)}_j$ 
giving the density of elements of $\phi^{(a_n)}$ of type $j$ inside elements of  $\phi^{(a_m)}$. Let us remark that the leading term depends on the type $j$ only. The error, or the deviations from this leading behavior, decreseas exponentially in the number of visits to $\hat{K}$. 
\begin{proof} 
Let us denote $\epsilon_{n} := (1-e^{-\overline{D}})^{n-1} \overline{D}$ where $\overline{D}>0$ is as in Property $(ii)$ in Lemma \ref{balancedaccelerationlemma}. 
Let us prove first that for each $1\leq i,\, j\leq d $ and $m<n$ we have
\be\label{entrieslimit}
e^{-2\epsilon_{n-m} }  \lambda^{(a_m)}_i  \leq \frac{A^{( m,n) }_{i j}}{h^{(a_n)}_j} \leq e^{2 \epsilon_{n-m}} \lambda^{(a_m)}_i .
\ee
Consider the sets $\widetilde {A^{(m,l) }} \Delta_{d-1} \subset \Delta_{d-1}$, for $l > m$, which form a nested sequence of sets. Since by  (\ref{lengthsrelation}) we have  $\underline{\lambda}^{(a_m)} = A^{(m,l)}  \underline{\lambda}^{(a_l)}$, we also have 
\bes
\frac{ \underline{\lambda}^{(a_m)}} { \lambda^{(a_m)} } \in \bigcap_{l > m} \widetilde {A^{(m,l)}} \Delta_{d-1} .
\ees
When $l= n$, since $D(A_i)\leq \overline{D}$ for each $i \in \mathbb{N}$ by in Property $(ii)$ in Lemma \ref{balancedaccelerationlemma}, applying $n-m-1$ times the contraction estimate (\ref{contraction}), we get
\be \label{contracteddiamater}
D( {A^{( m,n )} } ) \leq (1-e^{-\overline{D}})^{n-m-1} \overline{D} \leq \epsilon_{n-m} .
\ee
\noindent Denote by $\underline{e}_j$ the unit vector $(\underline{e}_j)_i=\delta_{ij}$ ($\delta$ is here the Kronecker symbol). Since both the vectors $\widetilde{A^{( m,n) } } \underline{e}_j $ and $ \frac{ \underline{\lambda}^{(m)}} { \lambda^{(m)} } $ belong to the closure of $\widetilde{A^{( m,n) } } { \Delta_{d-1} }$,
it follows by (\ref{contracteddiamater}), using compactness, that
\bes
d_H \left(  \frac{ \underline{\lambda}^{(a_m)}} {\lambda^{(a_m)} }  , 
\widetilde{A^{( m,n) } } \underline{e_j}  \right) = \log \frac{\max_{i=1,\dots , d} \frac{A^{ (m,n) }_{ij}}{{\lambda_i^{(a_m)}} }} {\min_{i=1 , \dots ,d}\frac{A^{ (m,n) }_{ij}}{{\lambda_i^{(a_m)}} }}    \leq \epsilon_{n-m} , 
\ees
where we also used the invariance of the distance expression by multiplication of the arguments by a scalar.
Equivalently, for each $1\leq i ,\, k \leq d$, 
\be
\label{projdistancecons}
e^{-\epsilon_{n-m}} \left( {A^{( m,n) }_{kj}}{{\lambda_i^{(a_m)}} }\right)
 \leq {A^{( m,n) }_{ij}}{{\lambda_k^{(a_m)}} } \leq e^{\epsilon_{n-m}}\left( {A^{( m,n) }_{kj}}{{\lambda_i^{(a_m)}} } \right)
\ee
and summing over $k$  or  respectively multiplying (\ref{projdistancecons})
 by $h^{(a_m)}_i$ and then sum  over both $k$ and $i$ and using that $\sum_i h^{(a_m)}_i \lambda ^{(a_m)}_i=1$ and (\ref{heightsrelation}), we get respectively
\be\label{1stratio}
e^{-\epsilon_{n-m}}\leq  \frac{{A^{( m,n) }_{ij}}{{\lambda^{(a_m)}} } }{{\sum_k A^{ (m,n) }_{kj}}{{\lambda_i^{(a_m)}} } }  \leq e^{\epsilon_{n-m}} , \qquad 
e^{-\epsilon_{n-m}} \leq  \frac{ {h}^{(a_n)}_j {\lambda^{(a_m)}}}{ { \sum_k A^{ (m,n) }_{kj}}}  \leq e^{\epsilon_{n-m}} .
\ee
Producing the estimates in  (\ref{1stratio})  gives (\ref{entrieslimit}).
Since, for $n-m$ sufficiently large,  $\epsilon_{n-m} \leq 1/2$ and  $|1-e^{\pm 2 \epsilon_{n-m}}|\leq 4  \epsilon_{n-m}$, the Lemma follows from (\ref{entrieslimit}) by 
remarking that $(N_{A})^{(m,n)}_{ij}= A^{(m,n)}_{ij}$  
 and setting $\delta_j^{(a_n)}:= {h}^{(a_n)}_j \lambda^{(a_n)}_j$.
\end{proof}
\subsubsection{Power form of the deviation}\label{powerformsec}
Let us show that, for times corresponding to a further appropriate acceleration of the cocycle $A$,  the deviations can be expressed as a small power  of the main order. 
\begin{lemma}\label{powercontrolN}
For a.e.~$T$ there exists  a subsequence $\{b_k := a_{n_k}\}_{k\in \mathbb{N}} \subset \{a_n\}_{n\in \mathbb{N}}$  
and $0< \gamma < 1$ such that  for all $k \in \mathbb{N}$, for all $0\leq {k'}<k$, 
  we have
\bes
(N_B)_{i j}^{({k'},k)}:= {(N_A)_{i j}^{(n_{k'} ,{n_{k}})}} = \delta_j^{( {b_k})} 
\left(  \frac{  \lambda^{(b_{k'})}_i }{ {\lambda_j^{({b_k})}} }  + {\E^{(b_{k'},b_{k})}_{i j}}  \right)
,  \qquad \left| \E^{(b_{k'},b_{k})}_{i j} \right| \leq  const { \left( \frac{  \lambda^{(b_{k'})}_i }{\lambda^{({b_k})}_j }\right)} ^{\gamma}   .
\ees
\end{lemma}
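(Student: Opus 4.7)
The strategy is to combine Lemma \ref{ergodicconvergenceN} with the exponential upper bound on the cocycle from Lemma \ref{expgrowthlemma} to convert the geometric decay of the relative error $\epsilon^{(b_{k'},b_k)}_{ij}$ into a power bound with respect to the leading term. From Lemma \ref{ergodicconvergenceN} we have $\E^{(b_{k'},b_k)}_{ij}=r\cdot\epsilon^{(b_{k'},b_k)}_{ij}$ with $r:=\lambda^{(b_{k'})}_i/\lambda^{(b_k)}_j$, so it suffices to bound $|\E^{(b_{k'},b_k)}_{ij}|/r^\gamma = r^{1-\gamma}\,|\epsilon^{(b_{k'},b_k)}_{ij}|$ uniformly in $k'<k$.

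I first apply Lemma \ref{expgrowthlemma} to the induced cocycle $A=Z_{\hat K}$, whose inverse is integrable by Remark \ref{inducedintegrable} applied to the integrable $Z^{-1}$. Revisiting the Egorov step in the proof of that lemma, the constant $\overline{C}_1$ can be taken to be any value strictly above the top Lyapunov exponent $\theta_A$ of $A$, at the cost of (possibly shrinking) the positive-measure set $E_1$. I prescribe $\overline{C}_1$ large enough that $e^{-\overline{C}_1}+e^{-\overline{D}}<1$, equivalently $\overline{C}_1>-\ln(1-e^{-\overline{D}})$. Since $\hat{\Z}_{\hat K}$ is ergodic, a.e.~$\hat T$ is recurrent to $E_1$ along some sequence $\{n_k\}$, and I set $b_k:=a_{n_k}$; as a subsequence of the $\nu$-balanced $\{a_n\}$, the $b_k$ are still $\nu$-balanced, so (\ref{balancedcomparisons}) applies.

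Define $\gamma:=1+\ln(1-e^{-\overline{D}})/\overline{C}_1\in(0,1)$ by the choice of $\overline{C}_1$. Combining (\ref{normproduct}), (\ref{balancedcomparisons}) and Lemma \ref{expgrowthlemma} gives $r\leq d\nu\, e^{\overline{C}_1(n_k-n_{k'})}$, hence $r^{1-\gamma}\leq (d\nu)^{1-\gamma}\, e^{(1-\gamma)\overline{C}_1(n_k-n_{k'})}$. Multiplying by the error estimate $|\epsilon^{(b_{k'},b_k)}_{ij}|\leq C_{\overline{D}}(1-e^{-\overline{D}})^{n_k-n_{k'}}$ from Lemma \ref{ergodicconvergenceN},
\bes
\frac{|\E^{(b_{k'},b_k)}_{ij}|}{r^\gamma}\;\leq\;C_{\overline{D}}(d\nu)^{1-\gamma}\Bigl[e^{(1-\gamma)\overline{C}_1}(1-e^{-\overline{D}})\Bigr]^{n_k-n_{k'}}\;=\;C_{\overline{D}}(d\nu)^{1-\gamma},
\ees
since with this $\gamma$ the bracketed factor equals $1$. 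This is the claimed uniform bound.

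The only delicate point is the freedom to enforce $\overline{C}_1$ above a prescribed threshold in Lemma \ref{expgrowthlemma}: from its proof, $\overline{C}_1$ is a uniform upper bound for $\ln\|A_{F^{-1}}^m\|/m$ on $E_1$, and Oseledets combined with Egorov makes every $\overline{C}_1>\theta_A$ realisable on some positive-measure $E_1$. Since $\theta_A$ is finite (as $A^{-1}$ is integrable), the condition $\overline{C}_1>\max(\theta_A,-\ln(1-e^{-\overline{D}}))$ poses no obstruction. If needed, a further acceleration obtained by inducing on a smaller compact subset of $\hat K$ provides additional margin by tightening $\overline{D}$; under the acceleration already fixed in Section \ref{balancedaccelerationsec} the above choice suffices.
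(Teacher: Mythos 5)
Your proof is correct and follows essentially the same path as the paper: invoke Lemma \ref{expgrowthlemma} to extract a positive-measure set $E_1\subset\hat K$ and the subsequence $\{n_k\}$, combine the bound $\lambda_i^{(b_{k'})}\leq d\nu\,\|A^{(n_{k'},n_k)}\|\,\lambda_j^{(b_k)}$ with the exponential error decay from Lemma \ref{ergodicconvergenceN}, and choose $\gamma$ so that $e^{(1-\gamma)\overline{C}_1}(1-e^{-\overline D})=1$. The only cosmetic difference is how the constraint $\gamma>0$ is handled: you enlarge $\overline C_1$ (harmless, since an upper bound with a larger constant still holds — this is more automatic than your closing paragraph suggests, and does not really require revisiting Egorov or the Lyapunov exponent), whereas the paper sets $\gamma$ from the given $\overline C_1$ and notes in a footnote that if this $\gamma\leq 0$ one may replace it by any $\gamma'\in(0,1)$ because the ratio $\lambda_i^{(b_{k'})}/\lambda_j^{(b_k)}\geq 1$.
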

\begin{proof}
By Lemma \ref{expgrowthlemma}, there exists a measurable set $E_B \subset \hat{K}$ with positive measure and $\overline{C}_1>0$ such that if $\hat{T} \in E_B$ is recurrent to $E_B$ along  $\{a_{n_k}\}_{k\in \mathbb{N}}$ 
(which is a subsequence of the visits to $\hat{K}$ since $E_B\subset \hat{K}$), (\ref{expgrowth}) holds. By ergodicity of $\hat{\Z}$, a.e.~$\hat{T}\in \hat{X}$ is recurrent to $E_B$. Thus for a.e.~$T$, there exists  $\hat{T} \in p^{-1}(T)$ recurrent to $E_B$ (indeed a full measure set of $\hat{T}$ in the fiber is recurrent) and   we can define $\{a_{n_k}\}_{k\in \mathbb{N}}$ to be the sequence along which $\hat{T}$ is recurrent.  

Since $(N_A)^{(n_{k'},{n_k})}_{i j}$
satisfies by Lemma \ref{ergodicconvergenceN} the estimate (\ref{deviation}) and $\delta_j^{(b_k)}\leq 1$, it is  
enough to prove that, for some\footnote{In the statement of Lemma \ref{powercontrolN} we require $0< \gamma <1$, but if (\ref{needederrorpowercontrol}) holds for some $\gamma\leq 0$, since $ \lambda^{(b_{k'})}_i / \lambda^{({b_k})}_j  \geq 1$ by positivity, it also holds for $0<\gamma'<1$.} $\gamma < 1$ and $const>0$, 
\be\label{needederrorpowercontrol}
 \left( \frac{  \lambda^{(b_{k'})}_i }{\lambda^{({b_k})}_j} \right)  \epsilon^{(b_{k'}, {b_k})}_{i j} \leq  const \left( \frac{  \lambda^{(b_{k'})}_i }{\lambda^{({b_k})}_j}\right) ^{\gamma}  .
\ee
By (\ref{normproduct}) and (\ref{balancedcomparisons}), 
$\lambda^{(b_{k'})}_i 
\leq d\nu \| {A}^{(n_{k'},n_k)}\| \lambda^{(b_k)}_j $.
Let $1-\gamma := - \log (1-e^{-D})/ \overline{C}_1 > 0$, so that $\gamma < 1$ 
 and, recalling the estimate of $ \epsilon^{(a_{n_{k'}},a_{n_k})}_{i j}= \epsilon^{(b_{k'},b_k)}_{i j}$ and using (\ref{expgrowth}), we have 
\bes
 { \left( \frac{  \lambda^{(b_k)}_j }{\lambda^{(b_{k'})}_i} \right)} ^{1-\gamma}\geq 
 {\left( \frac{1}{ d\nu \| {A}^{(n_{k'},n_k)}\|} \right)  } ^{1-\gamma} 
  \geq \frac{(d\nu)^{\gamma-1}}{ {\left( e^{\overline{C}_1({n_k-n_{k'}})}\right) }^{1-\gamma}}   =  (d\nu)^{\gamma-1} (1-e^{-D})^{n_k-n_{k'}}  \geq  
  c \left| \epsilon^{(b_{k'},b_k)}_{i j} \right|
\ees
for some constant $c>0$, 
which is exactly (\ref{needederrorpowercontrol}).  
\end{proof}
\noindent We will denote  by $B$ the induced cocycle of $A$ corresponding to this acceleration. 
In particular, for $k\in \mathbb{N}$,
 let   $B_k({T})= B_k(\hat{T})   = A ^{(b_k,b_{k+1})}(T)$ where $\{b_k\}_{k\in \mathbb{N}}$ is the sequence of visits of a chosen lift $\hat{T} \in p^{-1}(T)$ which is  recurrent to $E_B$.

\subsubsection{Deviations for any interval.}\label{deviationstsec}
Let $T$,   $\{b_k\}_{k \in \mathbb{N}}$ and $0<\gamma<1$ be as in Lemma \ref{powercontrolN}. Let us now consider  an interval $I\subset I^{(0)}$ and let us denote 
 the number of intervals of type $j$  of $\phi^{(b_k)}$ contained in $I$ by 
\bes
(N_B)^{(k)}_{j}(I) \doteqdot \# \{ \, l\, | \quad T^l I^{(b_k)}_{j} \subset I, \, 0\leq l < h^{(b_k)}_j \}.  
\ees
In order to describe the deviations from ergodic averages, set by convention $B_{-1}(T):= B_0(T)$ and introduce, for any  $0\leq k'<k$, the quantity 
\be\label{Thetadef}
\Th{k}{k'} = \Th{k}{k'}(T) : = \sum_{n=0}^{k-k'}\frac{\| B_{{k'}+n-1}(T) \|}{d^{\gamma n}}. 
\ee
The following lemma shows that the deviations of $(N_B)^{(k)}_{j}(I)$ can be estimated in terms of  $\Th{k}{k(I)}$, where
\be\label{kIdef}
\kei{I}:= \min \{k \st  \mathrm{there} \, \mathrm{exists} \,  F \in \phi^{(b_k)}\,  \mathrm{such }\,\, \mathrm{that}\, F \subset I \}. 
\ee
\begin{lemma}\label{deviationst}
For a.e.~$T$  and for all $k\in \mathbb{N}$ and all $1\leq j \leq d$,  given any interval $I$ of length $\Leb{I}\geq \lambda^{(b_k)}_j$, we have   
\bes
 (N_B)^{(k)}_{j }(I)  = \delta^{(b_k)}_j \left(  \frac{ \Leb{I}}{\lambda^{(b_{k})}_j} + \E^{(k) }_{j}(I)  \right),  \qquad \left|   \E^{(k) }_{j}(I)   \right|\leq const \,  \Th{k}{k(I)}(T)  \left(    \frac{ \Leb{I} }{\lambda^{(b_k)}_j}\right)^{\gamma} ,  
\ees
where 
 $\{b_k\}_{k \in \mathbb{N}}$, $0<\gamma<1$ and $\delta^{(b_k)}_j$ are the same than in Lemma \ref{powercontrolN}.  
\end{lemma}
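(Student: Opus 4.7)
The plan is to prove the estimate by iterating a one-level decomposition. Starting with $J_0 := I$ and $k_0 := \kei{I}$, at each step we decompose the current interval $J_m$ at level $k_m := \kei{J_m}$ into the union of full floors of $\phi^{(b_{k_m})}$ it contains plus at most two partial floors forming the next boundary interval $J_{m+1}$; we then apply Lemma \ref{powercontrolN} to the full part and push the residual onto $J_{m+1}$.

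One-step analysis. Since $\phi^{(b_k)}$ refines $\phi^{(b_{k_m})}$ for $k\geq k_m$, writing $p_i$ for the number of full level-$b_{k_m}$ floors of type $i$ contained in $J_m$ we get
\begin{equation*}
(N_B)^{(k)}_j(J_m) = \sum_{i=1}^d p_i (N_B)^{(k_m,k)}_{ij} + (N_B)^{(k)}_j(J_{m+1}).
\end{equation*}
Lemma \ref{powercontrolN}, together with the identity $\sum_i p_i \lambda^{(b_{k_m})}_i = \Leb{J_m} - \Leb{J_{m+1}}$, rewrites the first summand as $\delta^{(b_k)}_j(\Leb{J_m}-\Leb{J_{m+1}})/\lambda^{(b_k)}_j$ plus a deviation which, using the balance estimates (\ref{balancedcomparisons}) and the trivial bound $p_i \leq \Leb{J_m}/\min_i \lambda^{(b_{k_m})}_i$, is controlled by $const\,(\Leb{J_m}/\lambda^{(b_k)}_j)^\gamma (\Leb{J_m}/\lambda^{(b_{k_m})})^{1-\gamma}$. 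Minimality of $k_m=\kei{J_m}$ (no level-$(k_m-1)$ floor fits in $J_m$) gives $\Leb{J_m}\leq 2\lambda^{(b_{k_m-1})}$, and (\ref{normproduct}) converts $(\Leb{J_m}/\lambda^{(b_{k_m})})^{1-\gamma}$ into $\leq const\,\norm{B_{k_m-1}}$ using that $\norm{B_{k_m-1}}\geq d\geq 1$. The remaining quantity $(N_B)^{(k)}_j(J_{m+1}) - \delta^{(b_k)}_j \Leb{J_{m+1}}/\lambda^{(b_k)}_j$ has exactly the form of the error we are estimating, applied to $J_{m+1}$, which feeds the next iteration.

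Geometric decay of boundary size. The key point is that each $B_l$ is a product of strictly positive matrices $A_n > 0$ from Lemma \ref{balancedaccelerationlemma}(i), so (\ref{positvegrowth}) yields $\lambda^{(b_l)}\geq d\,\lambda^{(b_{l+1})}$ for every $l$. Combined with $\Leb{J_m}\leq 2\lambda^{(b_{k_m-1})}$ and $\Leb{I}\geq \lambda^{(b_{k(I)})}/(d\nu)$ (which follows from balance, since $I$ contains a level-$k(I)$ floor), iteration gives
\begin{equation*}
\frac{\Leb{J_m}}{\Leb{I}} \;\leq\; \frac{const}{d^{\,k_m - \kei{I}}}.
\end{equation*}
Raising to the $\gamma$-th power, the deviation produced at step $m$ is bounded by $const\,\norm{B_{k_m-1}}\,d^{-\gamma(k_m-\kei{I})}(\Leb{I}/\lambda^{(b_k)}_j)^\gamma$, which is precisely the $n=k_m-\kei{I}$ term in $\Th{k}{\kei{I}}(T)(\Leb{I}/\lambda^{(b_k)}_j)^\gamma$.

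Termination and summation. The cascade continues while $\Leb{J_m}\geq \lambda^{(b_k)}_j$; once $\Leb{J_m}<\lambda^{(b_k)}_j$ the final interval cannot contain any level-$b_k$ floor, so $(N_B)^{(k)}_j(J_m)=0$ and the final residual is bounded by $\delta^{(b_k)}_j\leq 1 \leq \Th{k}{\kei{I}}(T)(\Leb{I}/\lambda^{(b_k)}_j)^\gamma$ (since $\Leb{I}\geq \lambda^{(b_k)}_j$ and $\norm{B_{\kei{I}-1}}\geq 1$). Summing the contributions and observing that $\{k_m-\kei{I}\}_m$ is a strictly increasing subsequence of $\{0,1,\dots,k-\kei{I}\}$ while all terms of $\Th{k}{\kei{I}}(T)$ are nonnegative, the total deviation is dominated by $const\,\Th{k}{\kei{I}}(T)(\Leb{I}/\lambda^{(b_k)}_j)^\gamma$. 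The main obstacle is the bookkeeping of this telescoping cascade: verifying that the geometric factor $d^{-\gamma n}$ in the definition of $\Th{k}{\kei{I}}$ is reproduced uniformly at each step, regardless of whether the iteration advances one level at a time or skips several levels before the next full floor appears.
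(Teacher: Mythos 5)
Your proof is correct and follows essentially the same strategy as the paper: decompose $I$ into full floors of the nested partitions $\phi^{(b_{k'})}$ across levels, apply Lemma \ref{powercontrolN} to each floor, and control the resulting sum of deviations by $\Th{k}{\kei{I}}(T)$ using positivity of the cocycle to produce the geometric factor $d^{-\gamma(k'-\kei{I})}$. The only cosmetic difference is the bookkeeping: the paper carries a two-sided reminder through every level $k'=\kei{I},\dots,k$ (with $\#^{k'}_i\leq 2\norm{B_{k'-1}}$ directly), whereas you index only those levels $k_m=\kei{J_m}$ where full floors actually appear and derive the $\norm{B_{k_m-1}}$ factor from the minimality bound $\Leb{J_m}\leq 2\lambda^{(b_{k_m-1})}$ combined with (\ref{normproduct}); since the skipped levels contribute zero, the two decompositions coincide and your strictly-increasing subsequence $\{k_m-\kei{I}\}_m$ is dominated by $\Th{k}{\kei{I}}$ exactly as you note. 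The small abuse of calling $J_{m+1}$ an interval (it is a union of at most two boundary pieces) is harmless, as the same phenomenon appears in the paper's two-sided reminder, and the boundary pieces stay at two because the inner endpoints of the residuals are always aligned with partition boundaries of all finer levels.
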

\begin{proof} 
Let us decompose $I$ into elements of the partitions  $\phi^{(b_k)}$, $k\geq k(I)$, as follows. 
Consider all intervals of  $\phi^{(b_{\kei{I}})}$ which are completely contained in $I$. By definition of $\kei{I}$, this set is not empty and moreover, $I$ is contained in at most two intervals of  $\phi^{(b_{\kei{I}-1})}$. 
Hence, if we denote  by $\#^{\kei{I}}_i$ the number of intervals of $\phi^{(b_\kei{I})}$ of type $i$ contained in $I$, 
 if $k(I)>0$, we have  $\#^{\kei{I}}_i \leq 2 \max_{1\leq l \leq d} (B_{\kei{I}-1})_{ l i}
 \leq 2 \| B_{\kei{I}-1}\|$. If $k(I)=0$, since $I^{(0)}$ contains $\sum_{1\leq l \leq d}(B_0)_{l i}$ elements of $\phi^{(b_0)}$ of type $i$, we have $\#^{0}_i \leq  \norm {B_0}  $.  Thus, if by convention we set $B_{-1}:=B_0$, we have 
\bes
\Leb{I} = \sum_{i=1}^{d} \#^{\kei{I}}_i   \lambda_{i}^{(b_{\kei{I}})} + \delta (I, \kei{I}), \qquad  \#^{\kei{I}}_i \leq 2 \|  B_{\kei{I}-1}\| , \quad \delta(I, \kei{I}) \leq 2\max_{1\leq i\leq d}  \lambda_{i}^{(b_{\kei{I}})}, 
\ees
where $\delta (I, \kei{I})$ is the length of the reminder (possibly empty), given by the two intervals (at the two ends) left after subtracting from $I$ all interval of $\phi^{(b_{\kei{I}})}$ completely contained in it. Decompose in the same way the two reminders into intervals of $\phi^{(b_{\kei{I}+1})}$ (if any) completely contained in it and two new reminder intervals and so on by induction, until decomposing into elements of $\phi^{(b_k)}$. As before, if  $\#^{k'}_i$ is the number of intervals of $\phi^{(b_{k'})}$ of type $i$ involved in the decomposition,  $\#^{k'}_i \leq 2 \| B_{k'-1}\| $ since by construction each of the two reminders is contained in an interval of  $\phi^{(b_{k'-1})}$. Thus, we get 
\be \label{decompt}
\Leb{I} =  \sum_{k'=\kei{I}}^{k} \sum_{i=1}^{d} \#^{k'}_i   \lambda_{i}^{(b_{k'})} + \delta (I, k), \qquad  \#^{k'}_i \leq 2 \| B_{k'-1} \|, \quad \delta(I, k) \leq 2 \max_{1\leq i\leq d}  \lambda_{i}^{(b_{{k}})} .
\ee  
Using this decomposition to estimate $(N_B)^{(k) }_{j }(I)$ in terms of $(N_B)^{(k' ,k)}_{i j}$ and then applying Lemma \ref{powercontrolN},  we get 
\bes 
{(N_B)}^{(k) }_{j }(I) =  \sum_{k'=\kei{I}}^{k} \sum_{i=1}^{d} \#^{k'}_i   (N_B)^{(k' ,k)}_{i j}  = \delta^{(b_{k})}_i  \sum_{k'=\kei{I}}^{k} \sum_{i=1}^{d} \#^{k'}_i \left(   \frac{  \lambda^{(b_{{k'}})}_i }{\lambda^{(b_k)}_j} + \E_{i j}^{(b_{k'},b_{k})} \right) .
\ees
Recalling (\ref{decompt}), we have
\be \label{3errors}
 (N_B)^{(k)}_{j }(I) = \delta^{(b_{k})}_j  \frac{\Leb{I}}{\lambda^{(b_k)}_j }  + \delta^{(b_{k})}_j \sum_{k'=\kei{I}}^{k} \sum_{i=1}^{d} \#^{k'}_i   \E_{i j}^{(b_{k'},b_{k})} -  \delta^{(n_{k})}_j \frac{ \delta(I, k)  }{\lambda^{(b_{k})}_{j} }   .
\ee 
\noindent  In oder to conclude, let us show that each of the two last terms contributes to an error of the desired form. The very last term in (\ref{3errors}) is less then $2\nu$ by (\ref{decompt})  and balance (\ref{balancedcomparisons}) and hence, since  $\Leb{I}/ \lambda^{(b_k)}_j \geq 1$  by assumption, it is controlled by choosing the constant appropriately.  
For the other term, applying Lemma \ref{powercontrolN}, 
\bes
\sum_{k'=\kei{I}}^{k} \sum_{i=1}^{d} \#^{k'}_i  \E_{i j}^{(b_{k'},b_k)} \leq
 const  \left( \frac{ \Leb{I} }{\lambda^{(b_{k})}_j}\right) ^{\gamma}   \sum_{k'=\kei{I}}^{k} \|  B_{k'-1}\|  \left( \frac{  \lambda^{(b_{k'})}_i }{\Leb{I}}\right) ^{\gamma}.
\ees 
Since by definition of $\kei{I}$, (\ref{positvegrowth}) and balance (\ref{balancedcomparisons}) we have $\Leb{I} \geq \frac{1}{d\nu} \lambda^{(b_{\kei{I}})} \geq \frac{1}{d \nu} d^{k'-\kei{I}} \lambda^{(b_{k'} )}_i $, the sum is controlled as desired by
\bes
  \sum_{k'=\kei{I}}^{k} \|  B_{k'-1}\| \left( \frac{  \lambda^{(b_{k'})}_i }{\Leb{I}}\right) ^{\gamma} \leq {(d\nu)^{\gamma}}  \sum_{k'=\kei{I}}^{k} \frac{  \|  B_{k'-1}\| }{  d^{\gamma(k'-\kei{I})}}  =
   {(d\nu) ^{\gamma}} \, 
   \Th{k}{\kei{I}}.
\ees 
\end{proof}
In \S\ref{devfromarithsec}) we consider more in general intervals $I=(a,b) \subset \mathbb{R}$, of length $\Leb{I}\leq 1$ and either $a$ or $b$ belonging to the singularities set $\{\overline{z}^+_0, \dots, \overline{z}^+_{s_1}, \overline{z}^-_0, \dots, \overline{z}^-_{s_2}\}$. We consider $I$  as a subset of $I^{(0)}$ \emph{modulo one}, i.e.~ if $a<1<b$, we consider the union $(a,1) \cup (0,b-1)$ and if $a<0<b$, we consider $(0,b) \cup (a+1,1)$. 
One can decompose also this type of intervals so that (\ref{decompt}) holds. Indeed, if $I$ modulo one is a disjoint union, then $k(I)=0$ since one of the two intervals ($(a,1)$ or $(0,b)$ respectively) is union of elements of $\phi^{(b_0)}$, whose total number is bounded by $\|  B_{0}\|$, and the other interval can be decomposed as before. Thus, the same proof that shows Lemma \ref{deviationst} gives also the following remark.
\begin{rem}\label{deviationsforImod1}
Lemma \ref{deviationst} holds also for intervals $I$ modulo one such that $\Leb{I}\leq 1$ and one of the endpoints of $I$ is a singularity of $f$.  
\end{rem}

\subsection{Cancellations.}\label{cancelletionssec}
Let $b_{k}$ be a an induction time of the sequence $\{b_k\}_{k\in\mathbb{N}}$ of  Lemma \ref{deviationst}. Let $x_0\in I^{(b_k)}_{j_0}$ and let $r_k:=h^{(b_k)}_{j_0}$.  Consider the  distances of the points in the orbit segment $\{T^i x_0\}_{i=0}^{r_k-1}$ from the right singularity $\overline{z}^+_i$,  for $i=0, \dots, s_1-1$,  taken modulo one\footnote{For example, if $T^jz_0< \overline{z}^+_i$, then $T^j z_0 - \overline{z}^+_i \mod 1 = 1+ T^j z_0 - \overline{z}^+_i $. In this way, since $u_i(x)$ and $v_i(s)$ 
 are $1$-periodic, the quantity $T^j z_0 - \overline{z}^+_i \mod 1 $ (respectively $ \overline{z}^-_i  - T^j z_0 \mod 1$) gives the value of $1/u_i(T^jz_0)$  (respectively $1/v_i(T^jz_0)$).} (i.e. $ T^j z_0 - \overline{z}^+_i \mod 1,  0\leq j < r_k$), and, respectively, consider the distances from the  left singularity $\overline{z}^-_i$, for $i=0, \dots, s_2-1$, taken modulo one (i.e. $ \overline{z}^-_i  - T^j z_0 \mod 1,  0\leq j < r_k$). 
Rearrange each group in increasing order, renaming  by $x_i{(j)}$ (or respectively $y_i{(j)}$) the $j^{th}$ distance from the right (respectively from the left), so that the following equalities of sets hold\footnote{Here the notation $\{x\}$ denotes the singleton set containing $x \in \mathbb{R}$ as its element and should not be confused with the fractional part, that we denote by $\fracpart{x}$.}  
\begin{eqnarray}
&& \bigcup_{j= 0}^{ r_k-1} \left\{  x_i{(j)} \right\}  =   \bigcup_{j= 0}^{ r_k-1} \left\{  T^j z_0  - \overline{z}^+_i  \mod 1 \right\}, \quad 
 x_i{(j_1)}< x_i{(j_2)} \, \, \,  \forall j_1< j_2  \quad (0 \leq i <  s_1); \label{rearrangingx} \\
&&  \bigcup_{j= 0}^{ r_k-1} \left\{   y_i{(j)} \right\} =   \bigcup_{j= 0}^{ r_k-1}   \left\{  \overline{z}_i^- - T^j z_0  \mod 1 \right\}, \quad 
 y_i{(j_1)}< y_i{(j_2)} \, \, \, \forall j_1< j_2 \quad (0 \leq i < s_2). \label{rearrangingy} 
\end{eqnarray}

\subsubsection{Deviations from an arithmetic progression}\label{devfromarithsec}
As a consequence of Lemma \ref{deviationst}, we have the following.
For $j=0, \dots, r_k-1$, let $I_i^{+}(j)$,  for $i=0,\dots, s_1-1$, be the interval $(\overline{z}^+_i, \overline{z}^+_i + x_i(j))$ considered modulo one and let  $I_i^{-}(j) $,  for $i=0,\dots, s_2-1$,  be the interval $( \overline{z}^-_i - y_i(j),\overline{z}^-_i)$ considered modulo one.  For brevity, let $k_i^{\pm}(j):=k\left({I^{\pm}_i(j)}\right)$ (see the definition given in (\ref{kIdef})).

\begin{cor}\label{deviationsxjyj}
For all $i=0, \dots , s-1$, $1\leq j < r_k$, we have
\begin{eqnarray}
&& x_i(j) =\frac{ \lambda^{({b_k})}_{j_0}}{\delta^{(b_k)}_{j_0}} \left( j + O\left(  \Th{k}{k^{+}_i(j)} \, \left( \| B_{k}\| \, j \right)^{\gamma }\right) \right) 
\label{deviationsxj}\\
&& y_i(j) = \frac{ \lambda^{({b_k})}_{j_0}}{\delta^{(b_k)}_{j_0}} 
\left( j + O\left(  \Th{k}{k^{-}_i(j)} \,  \left( \| B_{k}\| \, j \right)^{\gamma }\right) \right) 
.\label{deviationsyj}
\end{eqnarray}
\end{cor}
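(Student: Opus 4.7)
The plan is to identify each orbit point $T^i z_0$ with the unique type-$j_0$ floor of $\phi^{(b_k)}$ containing it, and then invoke the deviations estimate of Remark \ref{deviationsforImod1}. Since $z_0 \in I^{(b_k)}_{j_0}$ and $r_k = h^{(b_k)}_{j_0}$, the orbit segment $\{T^i z_0\}_{i=0}^{r_k-1}$ visits each floor $T^l I^{(b_k)}_{j_0}$ of the tower $Z^{(b_k)}_{j_0}$---equivalently, each type-$j_0$ element of $\phi^{(b_k)}$---exactly once. Hence for any interval $I \subset I^{(0)}$ (possibly taken modulo one), the number of orbit points lying in $I$ equals $(N_B)^{(k)}_{j_0}(I)$ up to an $O(1)$ correction from the type-$j_0$ floors straddling $\partial I$. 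By the rearrangement (\ref{rearrangingx}), the mod-one interval $I^+_i(j) = (\overline{z}^+_i, \overline{z}^+_i + x_i(j))$ contains exactly $j$ orbit points, so
\begin{equation*}
(N_B)^{(k)}_{j_0}\bigl(I^+_i(j)\bigr) = j + O(1).
\end{equation*}

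Since $\overline{z}^+_i$ is a singularity of $f$ and an endpoint of $I^+_i(j)$, Remark \ref{deviationsforImod1} applies; writing $X := x_i(j)/\lambda^{(b_k)}_{j_0}$, it yields
\begin{equation*}
(N_B)^{(k)}_{j_0}\bigl(I^+_i(j)\bigr) = \delta^{(b_k)}_{j_0}(X + \E), \qquad |\E| \leq \mathrm{const}\cdot \Theta^k_{k^+_i(j)}\, X^\gamma.
\end{equation*}
Equating the two displays and using the uniform lower bound $\delta^{(b_k)}_{j_0} \geq 1/(d\nu^2)$ supplied by the balance estimates (\ref{balancedcomparisons}) (valid because $b_k$ is a balanced time), I solve for $x_i(j)$ to obtain
\begin{equation*}
x_i(j) = \frac{\lambda^{(b_k)}_{j_0}}{\delta^{(b_k)}_{j_0}}\Bigl(j + O(1) + O\bigl(\Theta^k_{k^+_i(j)}\, X^\gamma\bigr)\Bigr).
\end{equation*}

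It remains to replace $X^\gamma$ by $(\|B_k\|\, j)^\gamma$ in the error. The previous display gives the self-referential inequality $X \leq C(j+1) + C'\Theta^k_{k^+_i(j)} X^\gamma$; since $\gamma < 1$, a standard dichotomy bootstrap produces the a priori bound $X \leq \max\bigl(2C(j+1),\,(2C'\Theta^k_{k^+_i(j)})^{1/(1-\gamma)}\bigr)$. The linear summand is $O(\|B_k\|\, j)$ because $\|B_k\| \geq d^2$; the other summand is also $O(\|B_k\|\, j)$ by exploiting the geometric-tail structure of (\ref{Thetadef}) (the weights $d^{-\gamma n}$ ensure that $\Theta^k_{k^+_i(j)}$ is controlled by the last norm $\|B_{k-1}\|$ together with a uniformly convergent series). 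Substituting $X \leq \mathrm{const}\cdot \|B_k\|\, j$ back into the error yields (\ref{deviationsxj}). The degenerate case $x_i(j) < \lambda^{(b_k)}_{j_0}$, where Remark \ref{deviationsforImod1} does not directly apply, forces $j = O(1)$ and both sides of (\ref{deviationsxj}) are then uniformly bounded, so the formula holds trivially. The proof of (\ref{deviationsyj}) for $y_i(j)$ is identical, using $I^-_i(j) = (\overline{z}^-_i - y_i(j), \overline{z}^-_i)$ in place of $I^+_i(j)$.

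The main obstacle I expect is this last bootstrap step: while the orbit-to-floor identification and the invocation of Remark \ref{deviationsforImod1} are essentially bookkeeping, extracting the factor $\|B_k\|$ from the $(\Theta^k_{k^+_i(j)})^{1/(1-\gamma)}$ branch of the dichotomy is delicate and requires a careful use of the structure (\ref{Thetadef}) of $\Theta$ as a geometric tail sum in the norms $\|B_\cdot\|$.
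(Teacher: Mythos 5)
Your first three steps --- identifying the orbit-point count with $(N_B)^{(k)}_{j_0}(I^+_i(j))$, invoking Remark~\ref{deviationsforImod1}, and solving for $x_i(j)$ --- match the paper's proof (the $O(1)$ correction you add is actually unnecessary: since $\overline{z}^+_i$ is a floor boundary for every $\phi^{(n)}$ and the right endpoint of the open interval $I^+_i(j)$ is itself an orbit point excluded from it, the floors of the $j$ interior orbit points are all completely contained, so $(N_B)^{(k)}_{j_0}(I^+_i(j))=j$ exactly). The problem is your fourth step.

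The self-referential dichotomy $X\leq C(j+1)+C'\Th{k}{k^+_i(j)}X^\gamma$ splits into two branches, and for the branch $X\leq(2C'\Th{k}{k^+_i(j)})^{1/(1-\gamma)}$ you assert that $(\Th{k}{k^+_i(j)})^{1/(1-\gamma)}=O(\|B_k\| j)$ ``by exploiting the geometric-tail structure of~(\ref{Thetadef}).'' This claim is not justified and does not hold at this stage of the argument. The quantity $\Th{k}{k'}=\sum_{n=0}^{k-k'}\|B_{k'+n-1}\|\,d^{-\gamma n}$ is dominated by its \emph{early} terms $\|B_{k'-1}\|,\,\|B_{k'}\|/d^\gamma,\ldots$, not by $\|B_{k-1}\|$ as you suggest, and no bound relating these intermediate cocycle norms to $\|B_k\|$ or to $j$ is available: the sub-exponential control on $\|B_{k_l-m}\|$ from Lemma~\ref{subexpgrowthlemma} is only imposed in the \emph{next} step (the proof of Proposition~\ref{boundSf'growthprop}), where a further acceleration is introduced. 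Corollary~\ref{deviationsxjyj} must therefore be proved with $\Th{k}{k^+_i(j)}$ treated as an uncontrolled quantity appearing in the error, and the bootstrap cannot close.

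The correct and much simpler route, which you are missing, is a direct counting argument using positivity. Since $\{b_k\}$ is a subsequence of balanced return times, $B_k>0$, so each element of $\phi^{(b_{k-1})}$ contains at least one type-$j_0$ element of $\phi^{(b_k)}$, i.e.\ at least one orbit point. The left endpoint $\overline{z}^+_i$ of $I^+_i(j)$ is a floor boundary for $\phi^{(b_{k-1})}$, so the number of $\phi^{(b_{k-1})}$ elements completely contained in $I^+_i(j)$ is at least $x_i(j)/\lambda^{(b_{k-1})}-1$. Combining, $j\geq x_i(j)/\lambda^{(b_{k-1})}-1$, hence $x_i(j)\leq(j+1)\lambda^{(b_{k-1})}\leq(j+1)\|B_{k-1}\|\lambda^{(b_k)}$ by~(\ref{normproduct}). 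This gives $X=O(\|B_{k-1}\|\,j)$ without any appeal to the deviation estimate itself, and in particular without needing to control $\Th{k}{k^+_i(j)}$. You should replace the bootstrap entirely with this geometric bound.
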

\begin{proof} 
Consider the interval $I^{+}_i(j)$.  Since by definition $x_i(j)$ is the distance  of the $j^{th}$ closest point to  the right of $\overline{z}^+_i$, there are $j$ points of the orbit in $I^{+}_i(j)$, so $ (N_B)^{(k)}_{j_0 }(I^{+}_i(j))  = j $. Hence, Lemma  \ref{deviationst} together with Remark \ref{deviationsforImod1} gives
\bes
 j  = \delta^{(b_k)}_{j_0} \left(  \frac{x_i(j)  }{\lambda^{(b_{k})}_{j_0}} + \E^{(k) }_{j_0}(I^{+}_i(j))  \right),  \qquad \left|   \E^{(k) }_{j_0}(I^{+}_i(j))  \right|= O\left(  \Th{k}{k^+_i(j)}\,  \left( \frac{ x_i(j) }{\lambda^{(b_{k})}_{j_0}}\right)^{\gamma} \right) ,
\ees
for some $0< \gamma<1$, or, rearranging the terms, 
\bes
 x_i(j)  =  \lambda^{(b_{k})}_{j_0} \left( \frac{j}{\delta^{(b_{{{k}}})}_{j_0}}    -  \E^{({k}) }_{j_0 }(I^{+}_i(j)) \right) .
\ees
This gives (\ref{deviationsxj}) 
 if we show that $ x_i(j) / \lambda^{(b_{k})}_{j_0}\leq c\|B_k \| j$ for some constant $c$. Since by definition  $\{b_k\}_{k\in \mathbb{N}}$ is a subsequence of a balanced sequence, we have $B_k>0$. Thus, inside each element of $\phi^{(b_{k-1})}$ there is at least one element of $\phi^{(b_{k})}$ of type $j_0$, or, equivalently, one point of the orbit $\{T^i x_0\}_{i=0}^{r_k-1}$. Since a lower bound for the number of elements of  $\phi^{(b_{k-1})}$ in $I^+_i(j)$ is given by $ [ x_i(j) / \lambda^{(b_{k-1})}]$, where $[ \cdot ]$ denotes the integer part, we get  $j\geq x_i(j) / \lambda^{(b_{k-1})}-1$. Using that $\lambda^{(b_{k-1})} \le\ \| B_k\| \lambda^{(b_{k})}$, we get $ x_i(j) \leq \| B_k\|  (j+1)\lambda^{(b_{k})} $, which concludes the proof. 

The proof of (\ref{deviationsyj}) follows in the same way applying Lemma  \ref{deviationst} together with Remark \ref{deviationsforImod1} to the interval  $I^{-}_i(j)$.
\end{proof}

\label{Scheglov}
Let us remark that, in the special case in which the permutation $\pi$ is $(54321)$ and  $\overline{z}_i^{\pm}$ are the endpoints of a subinterval $I^{(0)}_i$ of $T$ (see the Footnote \ref{Scheglovsymm}, page \pageref{Scheglovsymm}), Scheglov in \cite{Sch:abs} shows that  for a.e.~$\underline{\lambda}$ one can find a subsequence of times $\{b_k\}_{k\in \mathbb{N}}$ and a constant $K>0$ such that the $|x_i(j)-y_i(j)| \leq K \lambda^{(b_{k})}_{j_0} $.  
  This stronger form of control of the deviations, which presumably holds for all combinatorics of the form  $(n \,  n\!-\!1 \dots 21)$, exploits crucially the symmetry of the permutation and hence can  be used to prove Theorem \ref{multivalHthm} only for $g=2$ (see Footnote \ref{n21}, page \pageref{n21}). 

\subsubsection{Acceleration for cancellations}
Let us now accelerate one more time in order to prove Proposition \ref{boundSf'growthprop}.
\begin{proofof}{Proposition}{boundSf'growthprop}
By Lemma \ref{subexpgrowthlemma} applied to\footnote{One can set $\epsilon= \overline{\gamma} \ln ( d / t ) $ where $t$ is any number $1<t<2$, so that $d\geq 2 > t$ gives $\epsilon>0$. Later we need $t>1$. Here, for concreteness, we chose $t=3/2$.} $\epsilon= \overline{\gamma} \ln ( 2 d / 3) \geq 0 $ where $\overline{\gamma}:=\min\{\gamma, 1-\gamma\} \geq 0 $, we can find $E_C\subset E_B$ such that 
if $\hat{T} \in p^{-1}(T)$ is recurrent to $E_C$ along  $\{b_{k_l}\}_{l\in \mathbb{N}}$  (which is a subsequence of the return times $\{b_k\}_{k\in \mathbb{N}}$ to $E_B$), we have
\be\label{subexpgrowtheql}
 \| B_{k_{l} - m  }  \|  \leq \overline{C}_2 ( 2 d/3)^{\overline{\gamma} m} , \qquad \forall \, 0\leq  m \leq k_l. 
\ee
We remark that a.e.~$T$ has a lift $\hat{T}$ recurrent to $E_C$, by ergodicity of $\hat{\Z}$. Let us set $c_l:=b_{k_l}$  
and let $x_0 \in I^{(c_{l})}_{j_0}$, $r_{l} =h^{(c_{l})}_{j_0}$.
Recall that that $f'$ is of the form (\ref{logsing}). Since $g$, being bounded variation, is in particular bounded by some $M_g>0$, we have $\BS{g}{r_l}\leq M_g r_l$. Thus, we can assume without loss of generality that  $f'=  \sum_{i=0}^{s-1}\left(  C_i^- v_i - C_i^+ u_i \right)$. 

For clarity, let us first give the proof in the special case $s_2=s_1=1$ and $f'(x)=- C^+_0/x + C^-_0/(1-x) $ with $C_0^+=C_0^-=C$,
 since the notation is heavier in the general case. 
Using the relabeling (\ref{rearrangingx}, \ref{rearrangingy}) for $s=1$, let $x_0(j)$ and $y_0(j)$ denote the distances of the $j^{th}$ orbit point from $0$ and $1$ respectively, so that 
\bes \BS{f'}{r_{l}} (z_0) =   \sum_{j=0}^{r_l-1} \left( \frac{C}{1- T^j z_0} - \frac{C}{T^j z_0}\right)
 = \sum_{j=0}^{r_l-1} \left( \frac{C}{y_0(j)} - \frac{C}{x_0(j)}\right) =  C \sum_{j=0}^{r_l-1}  \frac{x_0(j)- y_0(j)}{ x_0(j) y_0(j)}  .
\ees
\noindent We remark that each of the points in $\{T^i z_0\}_{i=0}^{r_l-1}$ belongs to a different floor of the tower  $Z^{(c_l)}_{j_0}$, hence,   $\min_{i\neq j} |T^{i} z_0 -T^{j} z_0 | \geq \lambda^{(c_l)}_{j_0}$ and we can estimate the denominator using that $x_0(j) , y_0(j) \geq j \lambda^{(b_{k_l})}_{j_0}$ when $j\geq 1$. 
To estimate the numerator, let us apply Corollary \ref{deviationsxjyj}. By (\ref{deviationsxj}) and (\ref{deviationsyj}) the leading term in $j$ in  $x_0(j)$ and $y_0(j)$ for $j \geq 1$ are the same  and cancel out. Moreover, $1/{\delta^{(c_l)}_{j_0}}\leq d \nu^2$ by balance (\ref{balance}). So, setting aside the contribution of the two closest points $x_0(0)= x^{min}_0$ and $y_0(0)= y^{min}_0$, we get
\bes
\left|  \BS{f'}{r_{l}}  -\frac{C}{y^{min}_0} +  \frac{C}{x^{min}_0} \right| \leq  \sum_{j=1}^{r_l-1} \frac{  j^{\gamma}  \lambda^{(b_{k_l})}_{j_0} \|B_{k_l}\|^{\gamma} 
\left( O\left(  \Th{k_l}{k^{+}_0(j)}  \right)  + O\left(  \Th{k_l}{k^{-}_0(j)}  \right) \right)   }{ j^2 (\lambda^{(b_{k_l})}_{j_0})^2 } . 
 \ees
Let us first bound the part of the above sum which involves $\Th{k_l}{k^{-}_0}(j)$. Let $I^k_{1^-}$ be the interval of $\phi^{(b_k)}$ which contains $1$ as a right endpoint. Set by convention $I^{(-1)}_{1^-}:= I^{(0)}$. The intervals $\{I^k_{1^-}\}_{k\in \mathbb{N}}$ are nested and we can use them to rearrange the sum over $j$ as follows. Let us remark that if $y_0(j) \in 
 I^{k'-1}_{1^-} 
 \backslash I^{k'}_{1^-}$, then,  by definition of $k^-_0(j)$, we have $k^{-}_0(j)=k'$. Moreover, since we kept aside the closest point to $1$, there are no orbit points in $I^{k_l}_{1^-}$. Thus, recalling also that $ \| B_{k_l}\|\leq \overline{C}_2 $ by (\ref{subexpgrowtheql}),  we obtain 
\bes
\sum_{j=1}^{r_l-1} \frac{ 
 \|B_{k_l}\|^{\gamma} 
  O\left( \Th{k_l}{k^{-}_0(j)}  \right)  }{ j^{2-\gamma} \lambda^{(b_{k_l})}_{j_0} } \leq
const\,  \sum_{k'=0}^{k_l}  \sum_{  y_0(j) \in  I^{k'-1}_{1^-} 
 \backslash I^{k'}_{1^-}}
\frac{   
  \Th{k_l}{k'}  
  }{ j^{2-\gamma} \, \lambda^{(b_{k_l})}_{j_0}  }\leq   const\, r_l \sum_{k'=0}^{k_l}  
\frac{  \Th{k_l}{k'}}  {d^{(1-\gamma)(k_l-k')}}, 
 \ees
where in the last inequality we used that by balance  $ \lambda^{(b_{k_l})}_{j_0}  \geq const (r_l)^{-1}$ and that, if $j_{k'}$ denotes the minimum $j$ such that $ y_0(j) \in I^{k'-1}_{1^-}  \backslash I^{k'}_{1^-}$,  using balance and positivity as in the proof of Corollary \ref{deviationsxjyj}, we get 
\be\label{jestimate}
\sum_{ y_0(j) \in  I^{k'-1}_{1^-} 
 \backslash I^{k'}_{1^-}}  \frac{1}{j^{2-\gamma}} = O \left(\frac{1}{j_{k'}^{1-\gamma}}\right) \quad \mathrm{and} \quad 
 j_{k'}\geq \left[ \frac{\min_j \lambda^{(b_{k'})}_j}{ \lambda^{(b_{k_l}-1)}} \right] \geq const \,  d^{k_l-k'}. 
\ee
Recalling that $\overline{\gamma}=\min\{\gamma, 1-\gamma\}$ and the definition (\ref{Thetadef}) of $ \Th{k_l}{k'}$,   and  changing indexes by $k=k_l-k'$ first and $m=k-n$ later in order to rearrange the sums, since $k_l$ by (\ref{subexpgrowtheql}) is such that $\| B_{k_l - m-1 } \| \leq  const {\left(\frac{2d}{3} \right)}^{\overline{\gamma} m}$,   we have
\bes
\begin{split}
 \sum_{k'=0}^{k_l}   \frac{  \Th{k_l}{k'}}{d^{(1-\gamma)(k_l-k')}} & \leq  \sum_{k=0}^{k_l} \frac{
  \Th{k_l}{k_l-k'}}{d^{(\overline{\gamma})k}} \leq \sum_{k=0}^{k_l}\sum_{n=0}^k \frac{\| B_{k_l-k+n-1}\|}{d^{\overline{\gamma}(n+k)}} =  \sum_{k=0}^{k_l}\sum_{m=0}^k \frac{\| B_{k_l-m-1}\|}{d^{\overline{\gamma}(2k-m)}} \\ &
  \leq \sum_{k=0}^{k_l} \frac{ const }{d^{\overline{\gamma}(2k) }}  \sum_{m=0}^k 
 \left( \frac{ 2 ^ {\overline{\gamma}} (d^2)^{ \overline{\gamma}  }}{ 3^ {\overline{\gamma}} } \right) ^m, 
\end{split}
\ees
which, since $(2d^2/3)^ {\overline{\gamma}}>1$ and thus $\sum_{m=0}^k ( 2d^{2}/3)^{ \overline{\gamma} m} = O(d^{2\overline{\gamma}k} (2/3)^k )$,  is bounded by $const$ $\sum_{k=0}^{k_l} (2/3)^{ \overline{\gamma} k}$, which is uniformly bounded independently on $l$.

The proof that also the sum involving $\Th{k_l}{k^{+}_0}(j)$ is uniformly bounded in $l$ is analogous and gives also an upper bound by a fixed constant. This concludes the proof in this case.

In the general case  $f'=  \sum_{i=0}^{s_2 -1} C_i^- v_i -   \sum_{i=0}^{s_1-1} C_i^+ u_i $, reducing to a common denominator, and denoting by $X(j):= \prod_{l=0}^{s_1-1} {x_l(j)}  $,   $Y(j):=  \prod_{l=0}^{s_2-1} {y_l(j)}  $  and by  $X_i(j):=  \prod
_{\begin{subarray}{c} 1\leq 1\leq s^1-1 \\ l\neq i \end{subarray}} {x_l(j)}  $,   $Y_i(j):=  \prod_{\begin{subarray}{c} 1\leq 1\leq s^2-1 \\ l\neq i \end{subarray}} {y_l(j)}  $,  we get
\be \label{commdenomcanc}
\BS{f'}{r_l} (z_0)
 =  \sum_{j=0}^{r_l-1} 
  \frac{  \sum_{i=0}^{s_2-1}  {C_i^-}  Y_i(j ) X(j)  - \sum_{i=0}^{s_1 -1} {C_i^+}  Y(j)   X_i(j) 
  } {  X(j) Y(j)  } .
\ee
By Corollary \ref{deviationsxjyj}, recalling that $\|B_{k_l} \|\leq const $, we have
\begin{eqnarray}\nonumber X(j) 
 &=& \left(  \frac{\lambda^{(c_l)}_{j_0}}{ {\delta^{(c_l)}_{j_0}}}  \right)^{s_1} \left( j^{s_1}+ \sum_{m_1=1}^{s_1} j^{{s_1}-m_1 +\gamma m_1 } \binom{s_1}{m_1}  \sum_{1\leq i_1<\dots <i_{m_1}\leq s_1}  \Th{k_l}{k_{i_1}^{+}(j)} \dots \Th{k_l}{k_{i_{m_1}}^{+}(j)}  \right),  \\ 
 \nonumber X_i(j)
 &=&  
  \left(  \frac{\lambda^{(c_l)}_{j_0}}{{\delta^{(c_l)}_{j_0}}}  \right) ^{s_1-1} 
  \left( j^{s_1-1} + \sum_{m_2=1}^{s_1-1}  j^{s_1-1- m_2  +\gamma m_2 } \binom{s_1}{m_2} \sum_{ \begin{subarray}{c} i_1<\dots <i_{m_2} \in \\ \{1,\dots, s_1\}\backslash \{i \} \end{subarray}}  \Th{k_l}{k_{i_1}^{+}(j)} \dots \Th{k_l}{k_{i_{m_2}}^{+}(j)} \right) 
\end{eqnarray}
\noindent and analogous expressions hold for $Y(j)$ and $Y_i(j)$ with $ \Th{i}{k_i^{-}(j)}$ and $s_2$ instead than  $\Th{i}{k_i^{+}(j)}$ and $s_1$.
Thus, since the coefficients of $j^{s_1+s_2-1}$ in $Y(j ) X_i(j)$ and $Y_i(j ) X(j)$ are the same,  by symmetry of the constants, the main order in $j$  of the numerator of the RHS of (\ref{commdenomcanc}) cancels out. 
Moreover, as before we have $X(j)\geq (j \lambda^{(c_l)}_{j_0})^{s_1} , Y(j) \geq (j \lambda^{(c_l)}_{j_0})^{s_2}$ because of the minimum distance between points. Thus, from balance (\ref{balancedcomparisons}) we can estimate $(\lambda^{(b_{c_l})}_{j_0})^{s_1+s_2-1}/(\lambda^{(b_{ c_l})}_{j_0})^{s_1+s_2} $ by $const \,  r_{l}$.  
Producing the lower order terms,  for each $1 \leq m_1 \leq s_1 $ and $1 \leq m_2 \leq s_2 -1 $ (or $1 \leq m_1 \leq s_1 -1 $ and $1 \leq m_2 \leq s_2 $), 
 we are left with a bounded number of terms to estimate. Each one, after simplifying the power of $j$ which is $(s_1-m_1+\gamma m_1) + (s_2-1-m_2+\gamma m_2)$ at numerator and $s_1+s_2$ at denominator, is of the form
\bes
\sum_{j=1}^{r_l-1} 
\frac{  
 \Th{k_l}{k_{1}^{+}}(j) \dots \Th{k_l}{k_{m_1}^{+}}(j) \Th{k_l}{k_{1}^{-}}(j)\dots
   \Th{k_l}{k_{{m_2}}^{-}}(j) }{j^{1+m_1+m_2 - (m_1+m_2)\gamma }}
   \leq    \sum_{j=1}^{r_l-1} \frac{ \Th{k_l}{k_{i_0 }^{\pm}} (j)}{ j^{2-\gamma}}   
{   \left(  \max_{\begin{subarray}{c} 0\leq i \leq s-1 \\ 1\leq j < r_l  \end{subarray}} 
\left( \frac{ \Th{k_l}{k_{i }^{\pm}(j)}}{j^{1-\gamma}}\right)   \right)}^{m_1+m_2-1} 
\ees
\noindent where $k_{i_0}$ is any index among $k_{1}^{+}, \dots , k_{m_1}^{+} $, $k_{1}^{-}, \dots , k_{m_2}^{-} $. Let us conclude the proof by showing that each of these terms is bounded (uniformely in $l$). Let $I^k_{i^+}$ (respectively $I^k_{i^-}$) be the interval of the partition $\phi^{(b_k)}$ which has $\overline{z}^+_i$ as left endpoint (respectively $\overline{z}^-_i$ right endpoint). The sum over $j$ is estimated exactly as before,  
\noindent 
  decomposing the sum using the nested intervals  $I^{k-1}_{{i_0}^\pm}\backslash I^{k}_{{i_0}^\pm  }$. To estimate the maximum, we remark that if $x_i(j)$ (respectively $y_i(j)$) belongs to $I^{k-1}_{{i}^+}\backslash I^{k}_{{i}^+  }$ (respectively $I^{k-1}_{{i}^-}\backslash I^{k}_{{i}^- }$), then   $k_i^{+}(j)= k$ (or respectively $k_i^{+}(j)= k$) and $j \geq const \, d^{k_l-k}$ (see (\ref{jestimate})), so that
\bes
\frac{\Th{k_l}{k_i^{\pm}(j)} }{ j ^{1-\gamma}} \leq \frac{\Th{k_l}{k} }{ d ^{\overline{\gamma}(k_l-k)}} \leq \sum_{n=0}^{k_l-k}\frac{ \norm{B_{k+n-1}}} { d ^{\overline{\gamma}(n+k_l-k)}} = \sum_{m=0}^{k_l-k} \frac{\norm{B_{k_l-m-1}}} { d ^{\overline{\gamma}(2(k_l-k)-m)}} \leq \frac{const}{d^{2\overline{\gamma}(k_l-k)}} \sum_{m=0}^{k_l-k} \left( \frac{2d^2}{3} \right)^{\overline{\gamma}m}, 
\ees    
where, reasoning as before, we changed the indexes by $m=k_l-k-n $ and used (\ref{subexpgrowtheql}). Since the sum in the last expression is $O\left(  d^{2\overline{\gamma}(k_l-k)}\, (2/3)^{\overline{\gamma} (k_l-k)}\right)$, we get a uniform bound for all $0\leq k \leq k_l$, which concludes the proof.
\end{proofof}

\subsection{Decomposition into Birkhoff sums along towers.}\label{decompositionsec}
From the estimate of Birkhoff sums along a tower given by Proposition \ref{boundSf'growthprop}, let us derive an estimate for more general Birkhoff sums.  
\begin{prop}\label{generalsumprop}
For a.e.~$T$, there exist a constant $M'$ and sequence of induction times $\{ n_k\}_{k\in \mathbb{N}}$ such that, whenever $z_0 \in I^{(n_{k})}_{j_0}$  for some $k$ and $0< r \leq  h^{(n_{k})}_{j_0}$, we have
\be \label{generalsum}
\left| \BS{f'}{r} (z_0) \right| \leq  M'   r +  \sum_{i=0}^{s_1-1}  \frac{C_i^+}{x_i^{min}} +  \sum_{i=0}^{s_2-1} \frac{C_i^-}{y_i^{min}}  ,
\ee
where $x_i^{min}$ and $y_i^{min}$ are the closest points to the singularities defined in (\ref{minx},\ref{miny}).
\end{prop}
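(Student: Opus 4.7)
The plan is to derive Proposition~\ref{generalsumprop} from Proposition~\ref{boundSf'growthprop} by decomposing an arbitrary initial segment of length $r$ of the orbit of $z_0$ into a finite collection of sub-orbits that are complete Birkhoff sums along sub-towers, and then summing the resulting estimates. I would take the sequence $\{n_k\}_{k \in \mathbb{N}}$ in the statement to be the sequence $\{c_l\}_{l \in \mathbb{N}}$ of balanced induction times produced by Proposition~\ref{boundSf'growthprop}.

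Given $z_0 \in I^{(n_k)}_{j_0}$ and $0 < r \leq h^{(n_k)}_{j_0}$, I would exploit the nested structure of the Rauzy-Veech towers. The skyscraper $Z^{(n_k)}_{j_0}$ decomposes into a concatenation of complete traversals of sub-towers $Z^{(n_{k-1})}_{j'}$ at level $n_{k-1}$, with the number of such sub-towers controlled by norms of the cocycle matrices. Parsing the orbit $\{T^i z_0\}_{i=0}^{r-1}$ greedily --- splitting off maximal complete sub-tower traversals at the highest available level and recursively decomposing any leftover prefix at the next lower level --- produces a decomposition
\[
\{T^i z_0\}_{i=0}^{r-1} \;=\; \bigsqcup_p \, \{T^{s_p + j} z_0\}_{j=0}^{r_p - 1}, \qquad r_p = h^{(n_{k_p})}_{j_p}, \quad T^{s_p} z_0 \in I^{(n_{k_p})}_{j_p}.
\]
Applying Proposition~\ref{boundSf'growthprop} to each piece yields
\[
|\BS{f'}{r_p}(T^{s_p} z_0)| \leq M r_p + \sum_{i=0}^{s_1-1} \frac{C_i^+}{x_i^{min, p}} + \sum_{i=0}^{s_2-1} \frac{C_i^-}{y_i^{min, p}},
\]
where $x_i^{min, p}$ and $y_i^{min, p}$ denote the minimum distances (\ref{minx})--(\ref{miny}) computed over the orbit points in piece $p$ alone. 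Since $\sum_p r_p = r$, the linear terms already contribute $Mr$ to (\ref{generalsum}).

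The main obstacle is controlling the summed singular terms $\sum_p 1/x_i^{min, p}$ (and analogously for $y_i^{min, p}$) by $C/x_i^{min} + C' r$, so that the excess can be absorbed into an enlarged linear constant $M' = M + C'$. For this, I would observe that the globally-closest orbit point to $\overline{z}^+_i$ belongs to a unique piece $p^{*}$, which contributes exactly $1/x_i^{min}$ to the sum. For each of the remaining pieces at level $n_{k_p}$, the orbit has exactly one representative in each floor of $Z^{(n_{k_p})}_{j_p}$ (floors of width $\lambda^{(n_{k_p})}_{j_p}$), and I would use the balance estimate (\ref{balancedcomparisons}), the positivity of the matrices $B_k$ guaranteed by Lemma~\ref{balancedaccelerationlemma}~(i), and the sub-exponential control (\ref{subexpgrowtheql}) on $\|B_k\|$ to show that these residual reciprocal distances telescope --- in a manner closely parallel to the telescoping sums over nested intervals $I^{k-1}_{i^\pm} \setminus I^{k}_{i^\pm}$ that appear in the proof of Proposition~\ref{boundSf'growthprop} --- to a uniform $O(r)$ bound. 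The delicate part will be the bookkeeping that ensures each induction level contributes summably and that no amplification of boundary terms occurs when passing across piece boundaries; this is essentially the same mechanism by which the geometric series in $(2d^2/3)^{\overline{\gamma} m}$ is controlled in the proof of Proposition~\ref{boundSf'growthprop}, and I would expect to reuse that computation almost verbatim once the decomposition is in place.
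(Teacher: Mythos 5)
Your overall strategy --- peel the orbit segment apart into complete tower traversals at nested induction levels, apply Proposition~\ref{boundSf'growthprop} to each piece, telescope the reciprocal-distance contributions so that a single global minimum $x_i^{min}$ survives and the rest is absorbed into an enlarged linear constant --- is exactly the approach the paper takes, and the greedy decomposition you sketch is essentially the paper's recursive construction of the intermediate points $z_j^{(l)}$.

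However, there is a concrete gap in your choice of the sequence: you propose to take $\{n_k\}$ to be $\{c_l\}$ directly, but this does not suffice. When you decompose, the number of complete tower traversals at level $l$ is of order $\|C_{l+1}\|$ (where $C$ is the cocycle accelerated to returns to $E_C$, i.e.~$C_l = B^{(c_l,c_{l+1})}$), and their aggregate contribution to the singular part of (\ref{generalsum}) produces, after comparison with $\lambda^{(c_{l_k})}$ and a change of index $l' = l_k - l - 1$, a series of the shape $\sum_{l'\geq 0}\|C_{l_k-l'}\|/d^{l'}$. For this to be uniformly bounded in $k$ (so that the excess is $O(r)$ with a single constant $M'$), one needs a \emph{backward} sub-exponential bound $\|C_{l_k-l'}\|\leq \overline{C}_3\,(d/2)^{l'}$, i.e.~an estimate of the type given by Lemma~\ref{subexpgrowthlemma}, applied to the $C$-cocycle at the specific times $l_k$. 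This is not automatic for every $l$; the bound (\ref{subexpgrowtheql}) that you invoke controls only $\|B_{k_l - m}\|$ and is internal to the proof of Proposition~\ref{boundSf'growthprop}, not a bound on $\|C\|$. The paper therefore performs one additional acceleration: it applies Lemma~\ref{subexpgrowthlemma} once more (with $\epsilon = \ln(d/2)$) to obtain a smaller positive-measure set $E_D\subset E_C$ and a further subsequence $\{n_k = c_{l_k}\}$ along which (\ref{subexpgrowthC}) holds, and this is what makes the geometric series converge. Without that extra step your telescoping argument cannot be closed, so the sequence in your statement cannot be all of $\{c_l\}$.
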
 
\noindent 
Comparing Proposition \ref{boundSf'growthprop} below with Proposition \ref{generalsumprop}, the difference is that the time $r$ considered is here any $0\leq  r \leq  h^{(n_k)}_{j_0}$.
\begin{proof} 
Let $\{ c_l \}_{l\in \mathbb{N}}$ be the sequence associated to a.e.~$T$ in Proposition \ref{boundSf'growthprop} and let $B$ the induced cocycle defined at the end of \S \ref{powerformsec}.  Let us denote by $C$ the accelerated cocycle over the first return map of $\Z$ to $E_C$ so that $C_l : =B^{(c_l,c_{l+1})} $. 
Let $E_D \subset E_C$ be given  by Lemma \ref{subexpgrowthlemma} for $\epsilon=\ln(d/2)$. For a.e.~$T$ we can assume that the chosen lift $\hat{T}$ is recurrent to $E_D$ along a subsequence $\{ n_k:= c_{l_k}\}_{k\in \mathbb{N}} $. Then, by Lemma  \ref{subexpgrowthlemma}, we have
\be\label{subexpgrowthC}
\| C_{l_k-l}\|\leq \overline{C}_3 (d/2)^l, \qquad \forall \, 0\leq l < l_k. 
\ee
Without loss of generality, we can again assume that $g=0$ as in the proof of Proposition \ref{boundSf'growthprop}.  Moreover, we can  assume that\footnote{If not, since $I^{(c_{l_{k'}})}$ are nested, we can define  $j_{k'}$ for $0\leq k'<k$  such that $z_0 \in I^{(c_{l_{k'}})}_{j_{k'}}$. Since $h^{(c_{l_{k'}})}_{j_{k'}}$ are increasing in $k'$, we can then substitute $k$ with the unique $k'$ for which $ h^{(c_{l_{k'}-1})}_{j_{k'-1}} < r <  h^{(c_{l_{k'}})}_{j_{k'}}$.} $ h^{(c_{l_k-1})}_{j_1} \leq  r \leq  h^{(c_{l_k })}_{j_0}$ for some $j_{1}$ and $z_0 \in I^{(c_{l_k-1})}_{j_1}$. 

Let us use the following notation. Let us denote the orbit segment $\{ z_0, T z_0, \dots, T^{r-1} z_0 \}$ by $\Orb{z_0}{ r} $. On $\Orb{z_0}{r}$ introduce an ordering $\prec$ and a distance $d_{\Or}$ using the natural ordering given by $T$ as follows. If $z_1, z_2 \in\Orb{z_0}{ r} $ and $z_1= T^{i_1} z_0$,  $z_2= T^{i_2} z_0$ for $ i_1, i_2 >0$, let $z_1 \prec z_2 $ iff $ i_1 < i_2$ and  let  $ d_{\Or}(z_1, z_2 ) = k $  iff $|i_1-i_2 |= k$.
  
Let us decompose the orbit  $\Orb{z_0}{ r}$ into Birkhoff sums along towers as follows.
Consider first  Birkhoff sums along the towers of  $\phi^{(c_{l_k -1})}$. Let $z^{(l_k -1 )}_j$, for $0\leq j \leq a_{l_k-1}$, be the elements of $\Orb{z_0}{ r} $ which are contained in $I^{(c_{l_k -1})}$ in increasing order w.r.t. $\prec$. More precisely, define by induction $z_0^{(l_k-1) }=  z_0 $ and $z_{j+1}^{(l_k-1) } = T^{(c_{l_k-1})} z_j ^{(l_k-1) }$, 
 so that  $z_{j+1}^{(l_k-1 )}$ is the smallest $z\succ z_{j}^{(l_k-1 )}$ such that $z\in \Orb{z_0}{ r} \cap I^{(c_{l_k-1})}$. The last step of the induction is $a_{l_k-1}$ where $a_{l_k-1} = \max \{ j \, | \,z_{j}^{(l_k-1) } \prec T^{r}z_0 \}$. Let us also define $r^{(l_k-1)}_j = d_{\Or} (z_0,  z_{j}^{(l_k-1) } )$. 
Since $r\geq  h^{(c_{l_k-1})}_{j_1}$, $a_{l_k-1}\geq 1$. Moreover, since  $d_{\Or}(z_{j}^{(l_k-1)},  z_{j+1}^{(l_k-1) } ) \geq \min_l h^{(c_{l_k-1})}_{l}$, 
  using balance (\ref{balance}) and (\ref{heightsrelation}) and $r\leq  h^{(c_{l_k})}_{j_0}$, we also have that $a_{l_k-1} \leq  r/ \min_l h^{(c_{l_k-1})}_{l} \leq \nu \| C_{l_k}\|$. So far we can write
\bes
 \BS{f'}{r} (z_0)  = \sum_{j=0}^{ a_{l_k-1}-1} \BS{f'}{r^{(l_k-1)}_{j+1} - r^{(l_k-1)}_j } 
{ (z_{j}^{(l_k-1)})} +  \BS{f'}{r'} { (z_{a_{l_k-1} }^{(l_k)})}  , \qquad 
 a_{l_k-1} \leq \nu \| C_{l_k}\|, 
\ees
where $r'= r-r^{(l_k-1)}_{a_{l_k-1}} $ and each term in the sum is by construction a Birkhoff sum along a tower of $\phi^{(c_{l_k -1})}$ 
 while the last term is a reminder that cannot be decomposed any more into  Birkhoff sums along towers of the same order. 

Let us continue  by induction to decompose the reminder into  Birkhoff sums along the towers  of $\phi^{(c_{l})}$ with $0\leq l< l_k-1$. To get from step $l+1$ to step $l$, let $z_0^{(l) }=  z_{a_{l+1}}^{(l+1)}$ and $z_{j+1}^{(l) } = T^{(c_{l})} z_j ^{(l) } 
 \in I^{(c_{l})}$
for $j=0, \dots, a_l$ with  $a_l = \max \{ j \, | \,z_{j}^{(l) } \prec T^{r} z_0 \} $. In this way again 
$ z_1^{(l) } \prec \dots \prec z_{a_l}^{(l) }$ are all elements $z\in \Orb{z_0}{ r}$ with $z\succ  z_{a_{l+1}}^{(l+1)}$  for which $ z\in I^{(c_{l})}$. Letting $r^{(l)}_j = d_{\Or} (z_0,  z_{j}^{(l) } )$, we have
\be \label{BSdecom}
 \BS{f'}{r} (z_0)  = \sum_{l=0}^{l_k-1} \sum_{j=0}^{ a_{l}-1} \BS{f'}{r^{(l)}_{j+1} - r^{(l)}_j } 
{( z_{j}^{(l)}) } +  \BS{f'}{r'} ({ z_{a_{0} }^{(0)}})  , \qquad r'= r-r^{(0)}_{a_{0}}\leq \max_l h_l^{(c_{0})} ,
\ee
where, if $a_l=0$ (it might happen for $l<l_k-1$), the sum over $j$ is taken by convention to be zero.
Moreover, as before, by construction we have $a_{l} \leq  \nu \| C_{l+1} \|$, $0\leq l\leq l_k -1$.

Let us apply Proposition \ref{boundSf'growthprop} to each addend in the double sum in (\ref{BSdecom}), denoting, in each  Birkhoff sum along a tower, the points which are closest to right and left singularities (recalling that $( \, \cdot \, )^{pos}$ denotes the positive part) by 
\begin{eqnarray}
&& ( x^{min}_i)^{(l)}_j 
 = \min _{0\leq s < r^{(l)}_{j+1} - r^{(l)}_j  }
(T^s z^{(l)}_i  -\overline{z}_i^+ )^{pos} ,  \qquad i=0, \dots, s_1-1 ;\nonumber \\
&&  ( y^{min}_i)^{(l)}_j = \min _{0\leq s < r^{(l)}_{j+1} - r^{(l)}_j  }
(\overline{z}^-_i  -T^s z^{(l)}_i  )^{pos}, \qquad i=0, \dots, s_2-1.
 \nonumber
\end{eqnarray}
We get 
\be\label{estimateBSdecomp1}
\left| \BS{f'}{r} (z_0)  \right| \leq \sum_{l=0}^{l_k-1} \sum_{j=0}^{ a_{l}-1} \left(  \sum_{i=0}^{s_1-1}  \frac{C_i^+}{ ( x^{min}_i)^{(l)}_j} +
 \sum_{i=0}^{s_2-1}  \frac{C_i^-}{  ( y^{min}_i)^{(l)}_j } +  M  ({r^{(l)}_{j+1} - r^{(l)}_j }) \right)  +  \left| \BS{f'}{r'} { (z_{a_{0} }^{(0)})} \right| .
\ee
Since the  sum $ \sum_{l=0}^{l_k-1}\sum_{j=0}^{ a_{l}-1} (r^{(l)}_{j+1} - r^{(l)}_j ) $ is telescopic, it reduces to $ r^{(0)}_{a_0} \leq r$. Moreover the last term in (\ref{estimateBSdecomp1}) can be estimated by $M r'+ \sum_{i=0}^{s_1-1}  \frac{C_i^+}{   x_i^{min}  } +   \sum_{i=0}^{s_2-1} \frac{C_i^-}{y_i^{min}} $ with $r'\leq  \max_j h_j^{(c_{0})}$, which is a constant.
Hence,  to conclude the proof of (\ref{generalsum}),   
we  are left to estimate the sums of contributions from closest points to the singularities in each cycle. 
Let us show that their contributions decrease exponentially in the order $k$ of the towers, thanks to the choice (\ref{subexpgrowthC}) of the times $\{c_{l_k}\}_{k\in \mathbb{N}}$.

Given any $0\leq i\leq s_1-1$, 
Let us consider the contribution to (\ref{estimateBSdecomp1}) coming from the points  
\be\label{grouplevell}
\left\{ {  ( x^{min}_i)^{(l)}_j }, \qquad   j=0, \dots, a_l-1 \right\} .
\ee   
Assume first that $0\leq l<l_k-1$.  We remark that all these points belong by construction to a unique tower of order $l+1$, the tower $Z^{(c_{l+1})}_{j(l+1)}$ such that $z^{(l)}_0 = z^{(l+1)}_{a_{l+1}+1}  \in I^{(c_{l+1}) }_{j(l+1)}$. Thus, the minimum spacing between them is by balance (\ref{balance}) at least $\lambda^{(c_{l+1})} /d\nu$. Thus, if we consider separately the minimum of (\ref{grouplevell}),  each of the other $a_l-1$ points of  (\ref{grouplevell})  gives a contribution less than  $C_i^+ d \nu / \lambda^{(c_{l+1})} $. Since the minimum of (\ref{grouplevell}) is bigger than the minimum $( x^{min}_i)^{(l+1)}_{a_{l+1}+1} $ of the level ${l+1}$ orbit segment of length $h^{(c_{l+1})}_{j(l+1)}$ 
 which contains  all points in (\ref{grouplevell}), it can be included in the analogous estimate corresponding to $l+1$, by considering $a_{l+1}$ contributions equals to $C_i^+ d \nu / \lambda^{(c_{l+2})} $ instead than $a_{l+1}-1$. 
When $l=l_k-1$, the minimum is simply given by $x_i^{min}$ and the contributions of all the other points are again estimated by $C_i^+ d \nu / \lambda^{(c_{l_k})} $ since they are all contained in different floors of the tower $Z^{(c_{l_k})}_{j_0}$. 

Hence, first recalling that $a_l \leq \nu \| C_{l+1}\|$, then using that $\lambda^{(c_{l+1})} \geq d^{l_k-l-1} \lambda^{(c_{l_k})}$ and setting $l':=l_k-l-1$, we get 
\be
 \sum_{l=0}^{l_k-1} \sum_{j=0}^{ a_{l}-1}   \frac{C_i^+}{ (x^{min}_i)^{(l)}_j} \leq \frac{ C_i^+}{  x^{min}_i} +  \sum_{l=0}^{l_k-1}\nu \| C_{l+1}\|  \, \frac{ C_i^+ d\nu }{\lambda^{(c_{l+1})}} 
 \leq \frac{ C_i^+}{  x^{min}_i} +  \frac{C_i^+ d\nu^2  }{\lambda^{(c_{l_k})}}\sum_{l'=0}^{l_k-1} \, \frac{\| C_{l_k-l'}\| }{d^{l'}} , 
\ee
where the last series is uniformly bounded for all $k$ by (\ref{subexpgrowthC}). Since by balance (\ref{balance}, \ref{balancedcomparisons}) we have  $1/\lambda^{(c_{l_k})} \leq \nu^2 \, h^{(c_{l_k})}_{j_0}$ and $h^{(c_{l_k})}_{j_0} \leq \| C_{l_k}\| \nu^2 h^{(c_{l_k-1})}_{j_1} \leq \overline{C}_3\nu^2  r$ (where the last inequality uses again (\ref{subexpgrowthC})), we get a bound of the desired form. 

Since, for any $0\leq i \leq s_2-1$, the contribution from ${  ( y^{min}_i)^{(l)}_j }$, $ j=0, \dots, a_l-1$  is estimated in the same way, this concludes the proof. 
\end{proof}

\subsection{Birkhoff sums variations and proof of absence of mixing.}\label{proofabsencethmsec}
In this Section we  complete the proof of Theorem \ref{absencethm}. 

\begin{proofof}{Theorem}{absencethm}
Let us verify the assumptions of the criterion for absence of mixing (Lemma \ref{absencemixingcriterion}). Given a typical $T$, consider the subsequence  $\{n_k\}_{k\in \mathbb{N}}$ of balanced times for which Proposition \ref{generalsumprop} holds.  In \S \ref{rigiditysec}  we already defined starting from $\{n_k\}_{k\in \mathbb{N}}$ a corresponding class of sets $E_k$ and times $r_k$ which verify conditions $(i)$ and $(ii)$ of the Definition \ref{rigiditysetsdef} of rigidity sets. Let us hence define the subintervals $J_k$ introduced  in \S \ref{rigiditysec} in such a way that also  condition $(iii)$ is satisfied.  
Referring to the notation in \S \ref{rigiditysec}, if $(I^{(n_k)}_{j_0} )_{ l_0}= [a,b)$, and $\lambda = b-a$ is its length, define $J_k = [ a + \lambda/4 , b - \lambda/4[$.  

Let us consider any two points $y_1, y_2 \in E_k$. By definition of $E_k$, we can write $y_1 = T^{k_1}z_1$, $y_2 = T^{k_2} z_2$ where $z_1, z_2 \in J_k$ are two points in the base and $0\leq k_1, k_2 < h^{(n_k)}_{j_0 }$. In order to prove $(iii)$, let us split the Birkhoff sums as 
\be\label{variation3parts}
\begin{split}
 \BS{f}{r_k} (y_1) &- \BS{f}{r_k} (y_2)  = \left( \BS{f}{r_k}  (T^{k_1} z_1) - \BS{f}{r_k} (z_1) \right)+\\ &+ \left( \BS{f}{r_k} (z_1) - \BS{f}{r_k} (z_2)\right) + \left( \BS{f}{r_k} ( z_2)  - \BS{f}{r_k} ( T^{k_2} z_2) \right) .
\end{split}
\ee 
Let us estimate the central term first. Since $J_k$ is  by construction contained in a continuity interval of $T^i$ for $0\leq i < r_k$, the mean value theorem gives that there exists some $z_0$ in $J_k$ such that 
\be \label{meanvaluecentral}
\left| \BS{f}{r_k} (z_1) - \BS{f}{r_k} (z_2) \right| \leq | \BS{f'}{r_k} (z_0)| |z_1-z_2| \leq  | \BS{f'}{r_k} (z_0)| \lambda/2 .
\ee 
The return time $r_k$ is in particular a return time to $I^{(n_k)}_{j_0}$ and hence to $I^{(n_k)}$; assume it is the $n^{th}$ return to $I^{(n_k)}$, where $n=\sum_{j=1}^d{a^k_j}$  and $a^k_j$ is the number of returns to $I^{(n_k)}_{j}$ (so $a^k_{j_0}\geq 1$). Hence we can write 
$r_k =\sum_{j=1}^{d}  a^k_j h^{(n_k )}_j $. 
Let us decompose the Birkhoff sums into  Birkhoff sums along the towers of $\phi^{(n_k)}$  as 
\be \label{decomp}
 \BS{f'}{r_k} (z_0)  = \sum_{j=1}^{d} \sum_{l=1}^{a^k_j}  \BS{f'}{ h^{(n_k )}_j} (z_l^j) ,
\ee
where the intermediate points\footnote{To construct the intermediate points and show (\ref{decomp}) one can use induction on $n$. 
If $n=1$, then
$T^{h^{(n_k)}_{j_0}}z_0 \in J_k$ and there is nothing to prove, since $r_k=h^{(n_k)}_{j_0}$ and setting $a^k_{j_0}=1$ and $a^k_j = 0$ for $j\neq j_0$,  (\ref{decomp}) becomes an identity.
Otherwise, if $n>1$, let $j$ be such that  $T^{h^{(n_k)}_{j_0}}z_0 \in I^{(n_k)}_{j}$ and define $z_j^1 = T^{h^{(n_k)}_{j_0}}z_0 $ so that $ \BS{f'}{r_k} (z_0) = \BS{f'}{ h^{(n_k) }_j} (z_0) + \BS{f'}{r'} (z_j^1)$, with $r'=r_k-h^{(n_k )}_j$. Since for $r'$ the number of return times is by construction $n-1$, the definition of the remaining intermediate points $z_l^j$ follows by induction.} are $z_1^{j_0}=z_0 $ and $z_l^j \in I^{(n_k)}_{j}$ for $l=1,\dots,a^k_j$ and for $j\neq j_0$ one can have $a^k_j = 0$, in which case the corresponding sum is empty. 



To each of the Birkhoff sums in (\ref{decomp}) let us apply Proposition \ref{boundSf'growthprop}. 
Remark that $T^i (I^{(n_k)}_{j_0} )_{ l_0}  $ for $i=0,\dots, r_k-1$ are all disjoint and rigidly translated by $T$ and that the singularities $\overline{z}^{\pm}_j$ are all contained in the boundary of the floors of the towers, so that, since $z_0$ belongs to a  central subinterval $J_k \subset (I^{(n_k)}_{j_0} )_{ l_0}  $,
the distance of each point in each orbit segment $\{ T^i z_l^j \}_{i=0,\dots, {h^{(n_k) }_j-1}}$ 
 from the singularities $\overline{z}^+_j$, $0\leq j \leq s_1$ and  $\overline{z}^-_j$, $0\leq j \leq s_2$ is at least $\lambda/4$. Moreover, we have $\sum_j a^k_j\leq 2d(d+2)\nu^2$, as it follows by combining that,  by disjointness  of $T^i (I^{(n_k)}_{j_0} )_{ l_0}  $, we have $\sum_j a^k_j h^ {(n_k)}_{j} \lambda/2 \leq 1$ and that, by balance (\ref{balance}) and construction (\ref{bigtower}, \ref{biginterval}) of $J_k$, we have $ h^{(n_k)}_j \geq 1/ (\lambda d(d+2)\nu )$. 
Hence, setting $\overline{C}=\sum_{j=0}^{s_1-1} C_j^+  + \sum_{j=0}^{s_2-1} C_j^-$, we have
\be\label{centralBSf'bound}
| \BS{f'}{r_k} (z_0)| \leq 2 d(d+2)\nu \left( M  \max_j h^{(n_k )}_j +  \frac{4 \overline{C} }{\lambda}\right) .
\ee 
Equation (\ref{centralBSf'bound}) combined with (\ref{meanvaluecentral}) and using again balance (\ref{balancedcomparisons}), gives the bound on the central term of (\ref{variation3parts}) by a constant.
Each of the other two terms of (\ref{variation3parts}) can be written, for $\nu=1,2$, as
\be\label{rewritingother2}
\begin{split}
 & \BS{f}{r_k}  (T^{k_\nu} z_\nu) - \BS{f}{r_k} (z_\nu)  = ( \BS{f}{r_k-k_\nu} (T^{k_\nu } z_\nu) + \BS{f}{k_\nu}(T^{r_k} z_\nu) )   - \\ &(  \BS{f}{k_\nu} (z_\nu) + \BS{f}{r_k - k_\nu} (T^{k_\nu } z_\nu)) = 
 \BS{f}{k_\nu} (T^{r_k } z_\nu) -  \BS{f}{k_\nu}( z_\nu)=   \BS{f'}{k_\nu}( u_\nu)(T^{r_k } z_\nu -z_\nu ), 
\end{split}
\ee
where $u_\nu$ is a point in $I^{(n_k)}_{j_0}$ between $z_\nu$ and $T^{r_k} z_\nu$. Since $z_\nu  \in J_k \subset  I^{(n_k)}_{j_0}$ and,  by construction of $J_k$, $T^{r_k} z_\nu \in  T^{r_k} J_k \subset  I^{(n_k)}_{j_0}$, $u_\nu$ has distance at least  $\lambda/4$ from both endpoints of $I^{(n_k)}_{j_0}$. Moreover $T^i I^{(n_k)}_{j_0}$ for $0\leq i< h^{(n_k)}_{j_0}$ are disjoint and are rigid translates of  $I^{(n_k)}_{j_0}$, so that all the points $T^i u_\nu$ for $0\leq i\leq k_\nu$ have  distance at least  $\lambda/4$ from both endpoints of $T^iI^{(n_k)}_{j_0}$ 
and  hence in particular from the singularities $\overline{z}_j^{+}$, $j=0, \dots, s_1$,  $\overline{z}_j^{-}$, $j=0, \dots, s_2$.  

We can hence apply Proposition \ref{generalsumprop} with $x_i^{min}, y_i^{min} \geq \lambda/4$ to get
\be\label{boundother2}
\left| \BS{f'}{k_\nu}( u_\nu)(T^{r_k } z_\nu -z_\nu )\right|\leq \left( M' k_{\nu } + {4\overline{C}}/{\lambda}\right)   \lambda^{(n_k)}_{j_0}\leq  M'+  8 \overline{C} (d+2),
\ee
where in the last inequality we used that $\lambda\geq  \lambda^{(n_k)}_{j_0}/2(d+2)$, by (\ref{biginterval}) and definition of $J_k$ and that $k_\nu\lambda^{(n_k)}_{j_0} \leq  h^{(n_k)}_{j_0} \lambda^{(n_k)}_{j_0} \leq 1$.
Thus, combining (\ref{rewritingother2}) and (\ref{boundother2}) we get the upper bound of the other two terms in (\ref{variation3parts}) by a constant. This concludes the  proof that $E_k$ and $r_k$ satisfy also Property $(iii)$ of Lemma \ref{absencemixingcriterion} and hence, using Lemma \ref{absencemixingcriterion},  the Proof of Theorem \ref{absencethm}.
\end{proofof}

\section{Reduction to special flows.}\label{reductionmultivalHsec}
In this section we first explain the notion of \emph{typical} that is used in Theorem \ref{multivalHthm} and then derive Theorem \ref{multivalHthm} from Theorem \ref{absencethm}.
In the space of deformations of closed smooth (at least $\mathscr{C}^2$) $1$-forms by closed smooth exact $1$-forms, 
a generic one form $\omega$ 
is \emph{Morse}, i.e. is locally given by $\omega = \ud H$ where $H$ is a Morse function. Thus, all zeros of $\omega$ correspond to either centers or simple saddles. Around each center there is an neighborhood filled by periodic orbits. Moreover, there can be cylinders filled by closed orbits (see \cite{Ma:tra, Le:feu, Zo:how}) and both type of regions of periodic orbits are bounded by saddle loops. 
We remark that such regions persist under perturbation by a closed form and hence 
the set of $\omega$ with centers or cylinders of periodic orbits is open and that, in this open set of flows, mixing can be produced by the mechanism used in \cite{SK:mix, Ul:mix}.  

Let us assume that the multi-valued Hamiltonian flow associated to $\omega$ has only simple saddles and there are no saddle connections, which in particular implies that the flow is minimal. From a result by Calabi \cite{Ca:ani} or by Katok \cite{Ka:inv}, there exists a holomorphic one form (or Abelian differential) $\omega'$ whose associated vertical flow determines the same measured foliation. Thus one can define a measure-class and a corresponding measure-theoretical notion of \emph{typical} multi-valued Hamiltonian coming from the measure class on strata of Abelian differentials, which is induced by the period map as follows. Let $\{\gamma_i\}_{i=1}^n$ be a basis of the relative homology $H_1(S, \Sigma, \mathbb{C})$, where $\Sigma$ are the zeros of $\omega$ and $n=2g+|\Sigma|-1$. The period map sends $\omega$ to the vector in $\mathbb{R}^n$ whose coordinates are integrals $\int_{\gamma_i} \omega$, $i=1, \dots, n$. The pull-back of the Lebesgue measure on $\mathbb{R}^n$ gives the desired measure-class
\footnote{Another measure-class is described by Zorich in \cite{Zo:how}. The surface $S$ can be embedded in a torus $\mathbb{\mathbb{T}^N}$ so that $\omega$ is the pull-back of a linear form $a_1 \ud x_1 \wedge \dots a_N \ud x_N$. Typical hence refers to a.e. direction perpendicular to the hyperplane given by $a_1, \dots, a_N$ in   $\mathbb{\mathbb{R}^N}$.}.   
Let us remark that the vertical flow of a typical Abelian differential $\omega'$ is a minimal flow without saddle connections. 

\begin{proofof}{Theorem}{multivalHthm} Let $\{ \varphi_t\}_{t \in \mathbb{R}}$ be a multi-valued Hamiltonian flow on $S$. 
For any $\gamma$ transversal cross section  to the flow, the Poincar{\'e} first return map $T$ on $\gamma$ preserves the measure induced by the area form on the transversal. Up to reparametrization (using the smooth conjugacy that sends the induced invariant measure to the Lebesgue measure on an unit interval parameterizing $\gamma$) we can assume that $T$ is an IET $(\underline{\lambda}, \pi)$ on $I^{(0)}$. Since we assume that the flow is minimal, $\pi$ is irreducible. The flow $\{ \varphi_t\}_{t \in \mathbb{R}}$ is isomorphic (up to the smooth conjugacy above) to a a special flow over $T$ under a roof function $f$ which is given by the first return time to the transversal. Using the representation of typical {A}belian differentials as zippered rectangles (for which we refer for example to \cite{Yo:con} or \cite{Vi:IET}), one can see that a full measure set of IETs gives a set of typical Abelian differentials and hence typical multi-valued Hamiltonian flows. Hence, in order to deduce Theorem \ref{multivalHthm} from Theorem \ref{absencethm}, it is enough to check that $f$ satisfies the assumptions of Theorem \ref{absencethm}.

 Let us choose the transversal $\gamma$ so that the  backwards flow orbits of the transversal endpoints both contain a saddle, but the endpoints are not at saddles, for example by considering the  usual cross section chosen for the zippered rectangle representation of the corresponding Abelian differential and shifting it by a sufficiently small $t_0<0$ along the flow direction.  Both discontinuities of $T$ and singularities of $f$ occur at points $\overline{z}_i$ which lie on a separatrix, hence whose forward orbit under $\varphi_t$ limit towards a saddle.  By choice of the transversal endpoints, the IET exchanges $d= 2g+s-1$ intervals, where $g$ is the genus and $s$ the number of saddles, and since the saddles are simple, by Gauss-Bonnet formula\footnote{If $k_i$, for $i=i, \dots, s$, denote  the orders of the zeros, the Gauss-Bonnet formula gives  $\sum_{i=1}^{s} k_i = 2g-2$ (we refer for example to \cite{Yo:con}  or \cite{Vi:IET}) and since we are assuming that $k_i=1$ for all $k=1, \dots, s$, we have $s=2g-2$.} $d=4g-3=2s+1$. 

Since the parametrization is locally Hamiltonian, trajectories are slowed down more and more the closer they come to a saddle. The one-sided limit  $\lim _{x  \rightarrow \overline{z}_i^{+}}f(x)$ (or $\lim_{x  \rightarrow \overline{z}_i^{-}} f(x)$) of the return time $f(x)$ blows up near $\overline{z}_i$ if the forward trajectories of the nearby 
points $x> \overline{z}_i$ (or $x < \overline{z}_i$) under the vertical flow of the Abelian differential, considered 
up to their return time, come arbitrarly close to a saddle. From the canonical form of a simple  saddle,  a calculation  (see \cite{Ar:top}) shows that the singularities are in this case logarithmic, i.e.~of the form $C_i | \log (x-\overline{z}_i)| $ for  $x\geq \overline{z}_i$ (or  $x\leq \overline{z}_i$) up to a function whose derivative has bounded variation, where the constant $C_i$ depends on the saddle. 

One can see, for example using  the zippered rectangle representation, 
  that out of the $2d=4s+2$ (two for each interval) one-sided limits of the form  $\lim_{x  \rightarrow \overline{z}_i^{\pm}} f(x)$, where $\overline{z}_i$ is either a discontinuity of $T$ or an endpoint of $I^{(0)}$, exactly   $4s$ are infinite and give discontinuties of $f$, since the corresponding zippered rectangle boundary contains a saddle, while the remaining two limits are finite and the correspoding return times for nearby $x > \overline{z}_i$ or $x < \overline{z}_i$ are bounded. 
Each of the $s$ saddles has two ingoing separatrices, each of which generate a left and a right logarithmic singularity  of $f$ with the same constant $C_i$ depending on the saddle. 
Thus, the number of right and left singularities is $s_1=s_2=2s$ and each constant $C_i$ appears four times, twice in a right-side singularity and twice in a left-side one. In particular, the logarithmic singularities are symmetric. 
\end{proofof}

\begin{rem}
From the proof of Theorem \ref{multivalHthm} one can see that the class of special flows which are used to represent multi-valued Hamiltonian flows is less general than the class considered in Theorem \ref{absencethm}. The permutations $\pi$ which arise are not all irreducible ones,  but only the ones which correspond to Abelian differentials in the principal stratum $\mathscr{H}(1, \dots, 1)$ of Abelian differentials with simple zeros\footnote{ For example the permutations of the form $(n \, n\!-\!1 \dots 2 1)$ 
correspond to   a principal stratum only if $g=2$ and $n=5$.\label{n21}} and the roof functions have symmetric logarithmic singularities in which $s_1=s_2$ and all constants appear in quadruples $C_{i_1}^{+}, C_{i_2}^{+}, C_{j_1}^{-},C_{j_2}^{-} $  (for some $0\leq i_1\neq i_2 < s_1=s$, $0\leq j_1\neq j_2 < s_2=s$)  such that $C_{i_1}^{+}= C_{i_2}^{+}= C_{j_1}^{-} =C_{j_2}^{-} $.     
\end{rem}

\section*{Acknowledgements}
We would like to thank Artur Avila, Giovanni Forni and Yakov Sinai for useful and inspiring discussions. The author is currently supported by an RCUK Academic Fellowship, whose support is fully acknowledged.
Part of this work was completed while at the Institute for Advanced Studies, thanks to the support by the Giorgio and Elena Petronio Fellowship Fund and NSF grant No DMS-0635607.\footnote{This material is partially based upon work supported by the National Science Foundation under agreement No DMS-0635607. Any opinions, findings and conclusions or recommendations expressed in this material are those of the author and do not necessarily reflect the views of the National Science Foundation.}

\bibliography{biblioAbsenceMixing}
\bibliographystyle{alpha} 

{\small \rmfamily{SCHOOL OF MATHEMATICS, UNIVERSITY OF BRISTOL, BRISTOL, UK, BS8 1TW}}

{\it E-mail address: }\texttt{corinna.ulcigrai@bristol.ac.uk}
\end{document}

We consider the following 
 flows on surfaces, associated to \emph{multi-valued Hamiltonians}. 
On a closed surface of genus $g\geq 2$ with a fixed area form, let us consider a closed differential $1$-form $\omega$.
Locally $\omega$ is given by $\ud H$ for some real-valued function $H$. The flow $\{\varphi_t\}_{t\in\mathbb{R}}$ determined by $\omega$ is the associated Hamiltonian flow, which is given by local solutions of $\dot{x}=\frac{\partial H}{\partial y}$, $\dot{y}=-\frac{\partial H}{\partial x}$). 
The transformations $\varphi_t$, for each $t \in \mathbb{R}$, are symplectic, which in dimension $2$ is equivalent to area-preserving.

The study of flows given by multi-valued Hamiltonians was first initiated
by S.P. Novikov \cite{No:the} in connection with problems arising in solid-state
physics i.e., the motion of an electron in a metal under the action of a magnetic field.
The orbits of such flows arise also in pseudo-periodic topology, as hyperplane
sections of periodic surfaces in $\mathbb{T}^n$ (see e.g. Zorich \cite{Zo:how}).



When $\omega$ is Morse, the flow on the surface is given by a vector
field with non-degenerate zeros which are 
 simple saddles and centers. The 
surface can be decomposed into finitely many \emph{periodic components}, i.e. connected components filled up by periodic trajectories (for instance around
a center) and components, called minimal, on which the orbits are dense (see more generally the decomposition of foliations of surfaces by Levitt \cite{Le:feu} ).

Arnold in \cite{Ar:top} was the first to address the question of the ergodic properties of such minimal components.
In the case of $\mathbb{T}^2$, Sinai and Khanin proved in \cite{SK:mix} that, as conjectured
in \cite{Ar:top}, minimal components are typically \emph{mixing}: for any two measurable
sets $A$ and $B$, the area of $\varphi_t(A)\cap B$ tends 
 to the product of the
areas of $A$ and $B$ as $t$ tends to infinity.

While
the investigation of the mixing properties of flows on $\mathbb{T}^2$ was pushed further by a series of works by
Kochergin, Fr{\c a}czek and Lema{\'{n}}czyk (and in the case of flows on $\mathbb{T}^3$ without fixed points by Fayad \cite{Fa:ana}), very little was known about
 higher genus surfaces. Mixing was proved only  when the form $\omega$ has a degenerate or multi-saddle (Kochergin, \cite{Ko:mix}), which happens in a non-generic situation. 

In two major works, \cite{Fo:sol, Fo:dev}, Forni considered area-preserving flows with only saddles and one ergodic component. He showed that for $g\geq 2$ new phenomena, compared to the $\mathbb{T}^2$ case, appear (deviations from ergodic averages and obstructions to solve the cohomological equation, which is a tool to investigate time changes and smooth conjugacy). The open problem of mixing 
 is also stated in \cite{Fo:dev}.



My current research is motivated by understanding the ergodic properties of the minimal components of flows given by multi-valued Hamiltonians on higher genus surfaces.

A convenient way of representing such flows (among others) 
 is provided by \emph{special flows}. Given a one dimensional
map $T$ on the unit interval $I$ and a positive integrable function $f \colon I \rightarrow \mathbb{R}^+$ with $\int_I f \ud x= 1$, the special 
flow $\varphi_t$ \emph{over}
$T$ and \emph{under} f is defined on the two dimensional phase space $X_f$ given by the area under the graph of $f$, $X_f = \{ f(x; y) | \, x\in I,  y \leq f(x) \}$, with the
point $(x, f(x))$ identified to $(0, Tx)$. The 
flow $\varphi_t$ moves a point vertically along the $y$-axes with unit speed, i.e., $\varphi_t(x,y)= (x, y+t)$ until $y+t\leq f(x)$, then, at $t=f(x)-y$, it returns back to the base according to $T$, at $(0, Tx)$.

When representing a flow on a surface (or more precisely one of its minimal components) as a special flow, one considers a cross section: the first return, or Poincar{\'e} map, to the cross section determines the base transformation $T$, while the function $f$ gives the first return time of the flow to the cross section. In the case of area-preserving flows on $\mathbb{T}^2$, cross sections are isomorphic to a rotation $R_{\alpha }: x \mapsto x + \alpha \,  ( \mathrm{mod} \, 1)$.